\numberwithin{equation}{section}
\newcommand{\overto}[1]{{\buildrel{#1}\over\longrightarrow}}
\title[Rigidity of group actions]{Rigidity of group actions on homogeneous spaces, III}
\author{Uri Bader}
\address{Technion, Haifa, Israel 32000}
\email{bader@tx.technion.ac.il}
\author{Alex Furman}
\address{University of Illinois at Chicago, Chicago, IL 60607-7045, USA}
\email{furman@math.uic.edu}
\author{Alex Gorodnik}
\address{University of Bristol \\ Bristol BS8 1TW, UK}
\email{a.gorodnik@bristol.ac.uk}
\author{Barak Weiss}
\address{Ben Gurion University, Be'er Sheva, Israel 84105}
\email{barakw@math.bgu.ac.il}
\newif\ifdraft\drafttrue
\newcommand{\Q}{{\mathbb {Q}}}
\newcommand{\R}{{\mathbb{R}}}
\newcommand{\Z}{{\mathbb{Z}}}
\newcommand{\HH}{{\mathbb{H}}}
\newcommand{\C}{{\mathbb{C}}}
\newcommand{\Ad}{{\operatorname{Ad}}}
\newcommand{\SL}{\operatorname{SL}}
\newcommand{\Lie}{\operatorname{Lie}}
\newcommand{\Map}{{\operatorname{Map}}}
\newcommand{\vol}{{\operatorname{vol}}}
\newcommand{\Aut}{{\rm Aut}}
\newcommand{\Prob}{\operatorname{Prob}}
\newcommand{\sm}{\smallsetminus}
\newcommand\ec{/\!\!/}
\newcommand {\ignore}[1]  {}
\newcommand{\BorelCat}{\mathcal{B}}
\newcommand{\MeasCat}{\mathcal{MCP}}
\newcommand{\Ind}[2]{\operatorname{Ind}_{#1}^{#2}}
\newcommand{\Res}[2]{\operatorname{Res}_{#1}^{#2}}
\newcommand{\acts}{\curvearrowright}
\newcommand{\Rdc}{\operatorname{R}_{\operatorname{dc}}}
\newcommand{\Na}{\operatorname{N}_{\operatorname{a}}}
\newcommand{\Mor}{\operatorname{Mor}}
\newcommand{\Morpmp}{\operatorname{Mor}^{1}}
\newcommand{\MT}[1]{\overline{#1}^{\sc MT}}
\newcommand{\Hilbert}{\mathcal{H}}
\newcommand{\Hypsp}{\mathbf{H}}
\newtheorem{thm}{Theorem}[section]
\newtheorem{lem}[thm]{Lemma}
\newtheorem{prop}[thm]{Proposition}
\newtheorem{cor}[thm]{Corollary}
\theoremstyle{definition}
\newtheorem{Def}[thm]{Definition}
\newtheorem{remark}[thm]{Remark}
\newtheorem{example}[thm]{Example}
\begin{document}

\maketitle

\begin{abstract}
~\\

Consider homogeneous $G/H$ and $G/F$,

for an $S$-algebraic group $G$.

A lattice $\Gamma$ acts on the left

strictly conservatively.

\medskip

The following rigidity results are obtained:

morphisms, factors and joinings defined apriori

 only in the measurable category

 are in fact algebraically constrained.

\medskip

Arguing in an elementary fashion

 we manage to classify

 all the measurable $\Phi$

 commuting with the $\Gamma$-action:

\medskip

  assuming ergodicity, we find

they are algebraically defined.
\end{abstract}


\section{Introduction}  

Flows on homogeneous spaces provide a rich and fruitful source of examples of dynamical systems.
Most of the literature on this subject concerns actions of subgroups $H<G$ on $G/\Gamma$,
where $G$ is a Lie group and $\Gamma<G$ is a lattice.
In this paper we consider a situation where the roles of the subgroups $\Gamma, H<G$ are reversed,
and study actions of a discrete subgroup $\Gamma<G$ on a homogeneous spaces $G/H$.
We shall mostly focus on the situation where $G$ is a Lie group, or a product of algebraic groups over local fields,
$\Gamma<G$ is a lattice, $H<G$ is a closed (algebraic) subgroup, and $G/H$ is equipped with the Haar measure class.
Our aim is to study the classification problem for such objects as:
\begin{enumerate}
	\item measurable $\Gamma$-equivariant maps $G/H\to G/L$,
	\item relatively probability measure-preserving $\Gamma$-quotients of $G/H$,
	\item relatively p.m.p. joinings of $\Gamma$-actions on $G/H$ with $G/L$,
	\item quasi-factors, i.e. $\Gamma$-equivariant maps $G/H\to \Prob(G/H)$,
\end{enumerate}
and seek situations where these \emph{measurable $\Gamma$-objects} (maps, quotients, joinings)
are necessarily \emph{algebraic}, and therefore can be explicitly described.
These results expand the scope of problems previously studied in this context in \cite{Shalom+Steger} and \cite{Furman}
and generalize most of the results there using new methods.
Before formulating the results, we define the precise framework of this study.
\medskip

\subsection{$\Gamma$-spaces} 
\label{sub:_gamma_spaces}\hfill{}\\
Let $G$ be a locally compact second countable group.
A \emph{Borel $G$-space} is a standard Borel space $(X,\mathcal{X})$ with a Borel action $G\acts X$,
where the action map $G\times X\to X$ is Borel measurable. Two probability measures on $X$ are called {\em equivalent} if they have the same nullsets.
A \emph{measurable $G$-space} is a Borel $G$-space $X$ equipped with a
probability measure $\mu$, defined on the given Borel $\sigma$-algebra $\mathcal{X}$ on $X$,
which is $G$-quasi-invariant, that is such that $g_*\mu$ is equivalent to $\mu$ for every $g\in G$.
We shall often write $(X,[\mu])$ to emphasize that it is only the measure class $[\mu]$ of $\mu$
that is assumed to be $G$-invariant.
If $(X,[\mu])$ is a measurable $G$-space and $V$ is a Borel $G$-space,
a map $f:X\to V$ is called $G$-\emph{equivariant}, or just a \emph{Borel $G$-map}, if it is Borel
measurable and for every $g\in G$ for $\mu$-a.e. $x\in X$,
\begin{equation}\label{eq:equivariance}
f(g\cdot x)=g\cdot f(x).
\end{equation}
Maps $f,f':X\to V$ that agree $\mu$-a.e. will be identified, and
\[
	\Map_G(X,V)
\]
denotes the set of all equivalence class of $G$-maps. In light of \cite[Appendix B]{Zimmer-book}, any map  in $\Map_G(X,V)$ is equivalent to $f:X \to V$ such that on a conull subset $X_0 \subset X$, (\ref{eq:equivariance}) holds for every $g \in G$ and every $x \in X_0$.
If $G_1$ is a subgroup of $G$ then any $G$-space is automatically a $G_1$-space and $G$-maps are $G_1$-maps, yielding an injective map $\Map_{G}(X,Y) \to \Map_{G_1}(X,Y).$ If this map is surjective we write $\Map_G(X,Y)=\Map_{G_1}(X,Y).$
With the usual abuse of notations $f\in \Map_G(X,V)$ will often mean that $f$ is an actual $G$-map,
rather than an equivalence class of such maps.

A measurable $G$-space $(X,[\mu])$ is \emph{ergodic} if every $G$-invariant measurable subset is $\mu$-null
or $\mu$-co-null.
Let $T$ be a standard Borel space with the trivial $G$-action.
Then $(X,[\mu])$ is ergodic iff every $G$-map $X\to T$ is a constant map, that is $\Map_G(X,T)\cong T$.


\subsection{Relatively probability measure-preserving factors} 
\label{sub:relpmp}\hfill{}\\
Let $(X,[\mu])$, $(Y,[\nu])$ be measurable $G$-spaces and $p:X\to Y$ be a $G$-map
so that $[p_*\mu]=[\nu]$.
Such a map will be called $G$-\emph{morphism}, \emph{$G$-quotient}, or
\emph{$G$-factor (map)}. We will be particularly interested in
relatively measure preserving morphisms, which we now define. Recall
that if $X$ is a Borel $G$-space then so is the
space $\Prob(X)$ of Borel probability measures on $X$. Additionally
recall that if $X, Y$ are standard Borel spaces, $\mu$ is a Borel
measure on $X$, $p: X \to Y$ is a Borel
map, and $\nu$ is the pushforward $p_*\mu$, then there is a {\em
  disintegration of $\mu$}, i.e. a
measurable map $Y\to \Prob(X)$, $y\mapsto \mu_y$, such that
\begin{equation}
\label{e:mu-is-average}
	\mu=\int_Y \mu_y\,d\nu(y),\qquad {\rm i.e.}\qquad \mu(E)=\int_Y \mu_y(E)\,d\nu(y)
\end{equation}
for all measurable $E\subset X$, and such that
\begin{equation}
\label{e:mu-y-on-fiber}	
	\mu_y(p^{-1}(\{y\}))=1\qquad{\rm for}\quad\nu{\rm -a.e.\ }y\in Y.
\end{equation}
This map is unique in the sense that any two such maps differ on a
nullset for $\nu$.
We now say that a $G$-morphism $p:(X,[\mu])\to (Y,[\nu])$ is
\emph{relatively probability measure-preserving}
(or relatively p.m.p.) if there is $\mu' \in [\mu]$ such that in the disintegration of $\mu'$ with respect to $p_*\mu'$, the corresponding map $Y\to \Prob(X)$ is  $G$-equivariant, that is,
\begin{equation}
\label{e:equivariant}	
	\mu_{g\cdot y}=g_*\mu_y\qquad(g\in G)
\end{equation}
for $\nu$-a.e. $y\in Y$.
If the measure class $[\mu]$ contains a $G$-invariant probability measure $\mu_0$, then
$[\nu]$ contains a $G$-invariant probability measure $\nu_0=p_*\mu_0$
and $p$ is relatively p.m.p.
If $G$ is a locally compact second countable group and $H<G$ is a closed subgroup,
then there is a unique invariant measure class on $G/H$ induced by Haar measure on $G$, which we will call the {\em Haar measure class} on $G/H$; the invariant measure class may or may not contain an invariant probability measure. If $L<G$ is another closed subgroup containing $H$, then the $G$-equivariant map $G/H\to G/L$, $gH\mapsto gL$ is relatively p.m.p. iff $L/H$ carries an $L$-invariant probability
measure.


\subsection{Relatively p.m.p. joinings and quasi-factors} 
\label{sub:relatively_p_m_p_joinings_and_quasi_factors}\hfill{}\\
A \emph{relatively p.m.p. joining} of two measurable $\Gamma$-spaces $(X_i,[\mu_i])$, $i=1,2$
is a $G$-quasi-invariant probability measure $\nu$ on $X_1\times X_2$,
so that both projections
\[
	p_i:(X_1\times X_2,\nu)\to (X_i,\mu_i)\qquad(i=1,2)
\]
are relatively p.m.p. maps.
For instance, if
$p:(X,\mu)\to (Y,\nu)$ is a relatively p.m.p. quotient map, then the pushforward of $\mu$ under the map $x \mapsto (x, p(x))$ has the required properties; this is the
\emph{relatively independent self-joining} of $(X,\mu)$ associated to $p:X\to Y$.

Given measurable $G$-spaces $(X,\mu)$ and $(Y,\nu)$, we say
that  $(Y,\nu)$ is a \emph{quasi-factor} of $(X,\mu)$ if there is a $G$-map
$\phi\in \Map_G(Y,\Prob(X))$, $y\mapsto \phi_y$, so that
\[
	\mu=\int_Y \phi_y\,d\nu(y).
\]
Thus every relatively p.m.p. quotient map $p:(X,\mu)\to (Y,\nu)$
defines a quasi-factor by disintegration.

Furthermore, any relatively p.m.p. joining $\nu$ of $(X_1,\mu_1)$ and $(X_2,\mu_2)$ defines
a pair of quasi-factor maps $X_1\to\Prob(X_2)$, $X_2\to\Prob(X_1)$, via the disintegration
of the projections $p_i:\nu\to\mu_i$.
Hence a classification of all quasi-factors may lead to a classification of relatively p.m.p. joinings;
and relatively p.m.p. factors. For example, if $p:(X,\mu)\to (Y,\nu)$ is a relatively p.m.p.
$G$-factor, and $y\mapsto\mu_y$ is the associated disintegration of $\mu$ with respect to $\nu$,
then
\[
	X\to\Prob(X),\qquad x\mapsto \mu_{p(x)}
\]
is a $G$-quasi-factor.



\subsection{$S$-algebraic groups, subgroups, actions} 
\label{sub:the_class_of_groups}\hfill{}\\
Let $S$ be a finite set, for each $v\in S$ let $k_v$ be a non-discrete local field of zero characteristic
(i.e., $\R$, $\C$, or a finite extension of $\Q_p$ for some prime $p$),
and $\mathbf{G}_v$ be an algebraic group defined over $k_v$.
Let $G=\prod_{v\in S} \mathbf{G}_v(k_v)$ be the locally compact second countable group formed by
the product of the $k_v$-points $\mathbf{G}_v(k_v)$ of $\mathbf{G}_v$.
We shall use the term \emph{$S$-algebraic group} to describe groups $G$ that arise in this way.
By an \emph{$S$-algebraic subgroup} $H$ of an $S$-algebraic group $G$, we mean any locally compact subgroup
$H=\prod_{v\in S} \mathbf{H}_v(k_v)$ where $\mathbf{H}_v<\mathbf{G}_v$ is a $k_v$-algebraic subgroup
for each $v\in S$.
Similarly, $V$ is an \emph{$S$-variety} if $V=\prod_{v\in S} \mathbf{V}_v(k_v)$ is a product
of $k_v$-points of $k_v$-varieties for $v\in S$.
We shall say that $V$ is an \emph{$S$-algebraic $G$-space} if $G$ is an $S$-algebraic group
and $V$ is an $S$-algebraic variety, equipped with a $G$-action $G\acts V$ associated
to $k_v$-algebraic actions $\mathbf{G}_v\acts \mathbf{V}_v$ for each $v\in S$.
Homogeneous $S$-algebraic spaces are $V=G/H$ where $H<G$ is an $S$-algebraic subgroup.

Any $S$-algebraic space $V$ can be considered as a Borel $G$-space, where the Borel structure
of $V$ is induced by its locally compact topology.
A homogeneous $S$-algebraic $G$-space, $V=G/H$, equipped with the unique $G$-invariant
measure class $[\mu]_{G/H}$ is a measurable $G$-space.
If $M<G$ is a closed (not necessarily $S$-algebraic) subgroup, any $S$-algebraic space
$V$ can be viewed as a Borel $M$-space, and any $S$-algebraic $G$-homogeneous space $G/H$ is a
measurable $M$-space.

Let $G$ be a general locally compact second countable group. A discrete subgroup
$\Gamma<G$ is a \emph{lattice} if $G/\Gamma$ has a $G$-invariant finite measure.
One of the reasons to consider the framework of $S$-algebraic groups,
rather than just algebraic groups over a single field, is that an $S$-algebraic group $G$ may
contain lattices which are not products of lattices in the factors $\mathbf{G}_v(k_v)$.
For example $\Gamma=\SL_n\left(\Z\left[\frac{1}{p}\right]\right)$ is a lattice in the $S$-algebraic
group $G=\SL_n(\R)\times \SL_n(\Q_p)$. A lattice is called {\em irreducible} if it does not have a discrete image in any proper factor of $G$.

Recall that if $H,M$ are closed subgroups in some locally compact group $G$, then $M\acts G/H$
is ergodic iff $H\acts G/M$ is ergodic, where both spaces are considered with the Haar measure classes.
If $\mathbf{G}_v$ are semi-simple $k_v$-groups without compact factors, $\mathbf{H}_v(k_v)$ are non-compact
for every $v\in S$, and $\Gamma<G$ is an irreducible lattice, then $H\acts G/\Gamma$ and $\Gamma\acts G/H$ are ergodic
by Moore's ergodicity theorem.


\subsection{Statements of the main results} 
\label{sub:equivariant_maps}\hfill{}\\
We shall now present the main results for actions of a lattice $\Gamma$ in an $S$-algebraic group $G$
on $G/H$ where $H<G$ is an $S$-algebraic subgroup.
We shall assume ergodicity of $\Gamma\acts G/H$ (equivalently $H\acts G/\Gamma$).
Let us first state the results under the following simplifying assumption
\begin{itemize}
	\item[(*)] $H$ has no non-trivial $S$-algebraic compact factor group.
\end{itemize}
Hereafter for any group action $G\acts S$ we denote by $S^G$ the set of $G$-fixed points in $S$.
\begin{thm}[Equivariant Borel maps]
	\label{T:equiv-Borel-maps}\hfill{}\\
	Let $G$ be an $S$-algebraic group, $H<G$ an $S$-algebraic subgroup, $\Gamma< G$ a lattice,
	and $G/H$ an ergodic $\Gamma$-space.
	Assume (*).
	Let $V$ be an $S$-algebraic $G$-space, viewed as a Borel $\Gamma$-space. Then
	\[
		\Map_\Gamma(G/H,V)=\Map_G(G/H,V)\cong V^H
	\]
	where $v\in V^H$ corresponds to the $G$-map $f_v(gH)=g\cdot v$.
\end{thm}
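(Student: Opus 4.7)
The identification $\Map_G(G/H,V)\cong V^H$ is immediate: a $G$-equivariant $f$ is determined by $v=f(eH)$, and $G$-equivariance applied to $h\in H$ forces $hv=v$, so $v\in V^H$; conversely $f_v(gH)\df g\cdot v$ is $G$-equivariant whenever $v\in V^H$. The substance of the theorem is thus the inclusion $\Map_\Gamma(G/H,V)\subseteq \Map_G(G/H,V)$, which I address next.

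Given $\phi\in\Map_\Gamma(G/H,V)$, I perform a Mackey-style switch by setting
\[
F\colon\Gamma\backslash G\longrightarrow V,\qquad F(\Gamma g)\df g^{-1}\phi(gH).
\]
A direct computation verifies that $F$ is well defined off a null set and satisfies the twisted relation $F(\Gamma g\cdot h)=h^{-1}F(\Gamma g)$ for $h\in H$. If $F$ is essentially constant, say $F\equiv v_0$, this relation forces $v_0\in V^H$, and unpacking the definition of $F$ recovers $\phi(gH)=g\cdot v_0=f_{v_0}(gH)$. The proof therefore reduces to showing $F$ is essentially constant. For a first reduction I use that $H\acts\Gamma\backslash G$ is ergodic (equivalent to the hypothesis $\Gamma\acts G/H$ is) and that the $S$-algebraic action $H\acts V$ is smooth, so $V/H$ carries a standard Borel structure with locally closed $H$-orbits. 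The composition $\Gamma\backslash G\to V\to V/H$ is genuinely $H$-invariant on $\Gamma\backslash G$ (since inversion preserves $H$-orbits), hence essentially constant by ergodicity, so $F$ takes values a.e.\ in a single orbit $H\cdot v_0$. Writing $F=\sigma\cdot v_0$ with $\sigma\colon\Gamma\backslash G\to H/H_0$ measurable (where $H_0\df\operatorname{Stab}_H(v_0)$ is an $S$-algebraic subgroup of $H$), the twisted relation translates into $\sigma(\Gamma g h)=h^{-1}\sigma(\Gamma g)$ in $H/H_0$.

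The remaining and crucial step, where assumption (*) must enter, is to show $\sigma$ is essentially constant: indeed, $\sigma\equiv\eta H_0$ forces $\eta H_0\eta^{-1}=H$, so $H_0=H$ and $v_0\in V^H$, completing the argument. Equivalently, $\sigma$ packages a $\Gamma$-equivariant measurable section $s\colon G/H\to G/H_0$ of the $G$-equivariant projection $G/H_0\to G/H$ with fiber the $S$-algebraic variety $H/H_0$, and one must rule out any such section apart from those coming from a coset fixed by $H$. This rigidity of measurable sections is the main obstacle. The strategy is to combine Moore-type ergodicity with the algebraic geometry of the fibration: assumption (*) ensures via Howe--Moore that $H$ acts mixingly on $\Gamma\backslash G$ along every non-compact direction, and this strong form of ergodicity, together with the algebraic structure of $H/H_0$, is expected to force any measurable equivariant section of the algebraic bundle $G/H_0\to G/H$ to collapse to a constant one. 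A compact factor of $H$ would precisely support non-trivial ``rotational'' measurable sections and hence nontrivial $\sigma$, which is exactly what (*) is designed to exclude.
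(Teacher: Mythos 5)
Your argument is correct and coincides with the paper's up to the point where you must show $F$ is essentially constant; the formula $F(\Gamma g)=g^{-1}\phi(gH)$ is a by-hand instance of the Frobenius-reciprocity correspondence that the paper packages abstractly in Proposition~\ref{p:frob} / Corollary~\ref{p:duality} and spells out explicitly in Remark~\ref{R:duality}. So the overall architecture (switch $\Gamma$-maps on $G/H$ for $H$-maps on $G/\Gamma$) is the same.

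The gap is in the final step, and it is genuine: the ingredient that closes the argument is the Borel density theorem, not Howe--Moore. Mixing of $H\acts G/\Gamma$ is a statement about decay of matrix coefficients and does not, by itself, forbid nontrivial $H$-equivariant maps $G/\Gamma\to W$ into an $S$-algebraic $H$-space $W$. Moreover, your reduction to showing $\sigma\colon G/\Gamma\to H/H_0$ is essentially constant is another instance of the very statement you started from (an $H$-map from a finite-invariant-measure $H$-space into an $S$-algebraic $H$-space must land in the fixed-point set), so no genuine progress has been made, and the words ``is expected to force'' are doing all the remaining work. What actually closes the argument, and what the paper uses via Corollary~\ref{c:bdt}, is this: $F_*\xi\in\Prob(V)$ is $H$-invariant, where $\xi$ is the finite $G$-invariant measure on $G/\Gamma$, and Theorem~\ref{th:BDT} --- applicable since (*) guarantees $\Rdc(H)=H$ --- forces $F_*\xi$ to be supported on $V^H$. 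Hence $F$ lands a.e.\ in $V^H$, on which $H$ acts trivially, so $F$ is $H$-invariant and thus essentially constant by ergodicity of $H\acts G/\Gamma$, with value $v_0\in V^H$, giving $\phi=f_{v_0}$. (Equivalently, applied in your setup, $\sigma_*\xi$ is an $H$-invariant probability measure on $H/H_0$, hence supported on $(H/H_0)^H$, which is nonempty only when $H_0=H$.) Once Borel density is invoked, the detour through the orbit space $V/H$ and the stabilizer bundle $H/H_0$ is unnecessary; that is exactly the streamlining achieved by the paper's proof of Theorem~\ref{T:algebraic-for-specialH} specialized to $n=0$.
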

As an immediate corollary we obtain measure-theoretic rigidity results.
Given a measurable $\Gamma$-space $(X,[\mu])$ denote by $\Aut(X,[\mu])$ the group
of measure-class automorphisms of $X$, up to null sets,
and let $\Aut_\Gamma(X,[\mu])$ denote the subgroup of $\Gamma$-equivariant ones.

We denote conjugation by $h^g=g^{-1}hg$, $H^g=g^{-1}Hg$, and normalizers by
\[
	\mathcal{N}_G(H)=\{ g\in G \mid H^g=H\}.
\]
Recall that $\mathcal{N}_G(H)/H$ is $\Aut_G(G/H)$ -- the group of $G$-equivariant bijections of
$G/H$ as a set. Here $nH\in \mathcal{N}_G(H)/H$ acts on $G/H$ by $gH\mapsto gnH$.
\begin{cor}\label{C:isom-centralizer}\hfill{}\\
	Let $G$ be an $S$-algebraic group, $H,H_1,H_2<G$ be $S$-algebraic subgroups satisfying (*), $\Gamma<G$ a lattice whose action on $G/H_i$ is ergodic. Then
	\begin{enumerate}
		\item
		$\Map_\Gamma(G/H_1,G/H_2)=\Map_G(G/H_1,G/H_2)$ and all such maps are given by $\ gH_1\mapsto gg_0H_2$,
		where $H_1^{g_0}<H_2$.
		\item
		The spaces $G/H_1$ and $G/H_2$ are isomorphic as ergodic $\Gamma$-spaces iff
		they are algebraically isomorphic, i.e., $H_1^{g_0}=H_2$ for some $g_0\in G$.
		\item
		$\Aut_\Gamma(G/H)=\Aut_G(G/H)\cong \mathcal{N}_G(H)/H$.
	\end{enumerate}
\end{cor}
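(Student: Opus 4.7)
The plan is to derive all three statements as direct corollaries of Theorem~\ref{T:equiv-Borel-maps} applied with $V=G/H_2$, which is an $S$-algebraic $G$-space (and hence a Borel $\Gamma$-space). For part~(1), I would unwind the conclusion $\Map_\Gamma(G/H_1,G/H_2)=\Map_G(G/H_1,G/H_2)\cong (G/H_2)^{H_1}$ in algebraic terms: a coset $g_0H_2$ lies in the $H_1$-fixed set exactly when $H_1 g_0 H_2 = g_0 H_2$, i.e.\ when $H_1^{g_0}\subset H_2$, and the associated $G$-map is then $gH_1\mapsto g\cdot g_0H_2 = g g_0 H_2$. That gives part~(1) on the nose.

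For part~(2), the strategy is to apply part~(1) in both directions. Given a $\Gamma$-equivariant measurable isomorphism $\phi\colon G/H_1\to G/H_2$, part~(1) yields $\phi(gH_1)=gg_0H_2$ and $\phi^{-1}(gH_2)=gg_1H_1$ with $H_1^{g_0}\subset H_2$ and $H_2^{g_1}\subset H_1$. Writing out the two composition identities forces $g_0g_1\in H_1$ and $g_1g_0\in H_2$, so I can write $g_1=h_2 g_0^{-1}$ for some $h_2\in H_2$. Substituting into $H_2^{g_1}\subset H_1$ gives $H_2^{g_0^{-1}}\subset H_1$, i.e.\ $H_2\subset H_1^{g_0}$; together with the reverse inclusion this yields $H_1^{g_0}=H_2$. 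Conversely, when $H_1^{g_0}=H_2$, the map $gH_1\mapsto gg_0H_2$ is well-defined and admits the $G$-equivariant inverse $gH_2\mapsto gg_0^{-1}H_1$, so the isomorphism class in the measurable category coincides with the algebraic one.

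Part~(3) is then the specialization to $H_1=H_2=H$: by part~(2), the bijective elements of $\Map_\Gamma(G/H,G/H)$ correspond to $g_0\in G$ with $H^{g_0}=H$, i.e.\ $g_0\in\mathcal{N}_G(H)$, and two such elements $g_0,g_0'$ induce the same map $gH\mapsto gg_0H$ iff $g_0H=g_0'H$, i.e.\ iff they represent the same left coset in $\mathcal{N}_G(H)/H$. This yields $\Aut_\Gamma(G/H)=\Aut_G(G/H)\cong\mathcal{N}_G(H)/H$. Since Theorem~\ref{T:equiv-Borel-maps} does all the substantive work, there is no serious obstacle; the only mildly delicate point is the algebraic manipulation in~(2) that upgrades the two one-sided inclusions into the equality $H_1^{g_0}=H_2$.
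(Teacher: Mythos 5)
Your proof is correct and follows exactly the route the paper intends: the paper states the corollary immediately after Theorem~\ref{T:equiv-Borel-maps} with no separate argument, precisely because parts (1)--(3) unwind from the identification $\Map_\Gamma(G/H_1,V)=\Map_G(G/H_1,V)\cong V^{H_1}$ applied to $V=G/H_2$, and that is what you do. Your algebraic bookkeeping in part~(2) -- using the a.e.\ composition identities $\phi^{-1}\circ\phi=\mathrm{id}$ and $\phi\circ\phi^{-1}=\mathrm{id}$, which propagate to actual identities on the homogeneous space by $G$-equivariance and transitivity, to force $g_0g_1\in H_1$, $g_1g_0\in H_2$ and hence $H_1^{g_0}=H_2$ -- is the only step requiring care, and you handle it correctly.
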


\medskip
Given an ergodic $\Gamma$-space $G/H$ as above, it is possible to
classify all of its relatively p.m.p.
$\Gamma$-factors.
\begin{thm}[Relatively p.m.p. factors]
	\label{T:factors}\hfill{}\\
	Let $\Gamma, H< G$ be as in Theorem~\ref{T:equiv-Borel-maps},
        and let $p:G/H\to Y$ be a relatively p.m.p.
	$\Gamma$-factor.
	
	Then $(Y,\nu)$ is a relatively p.m.p. $G$-factor of $G/H$.
	More precisely, there is a closed subgroup $H\triangleleft L<G$ with
	$K=L/H$ compact, so that $Y\cong G/L$ and
	\[
		p:G/H\to Y\cong G/L\qquad\text{is\ given\ by}\qquad gH\mapsto gL.
	\]
\end{thm}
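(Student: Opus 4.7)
The plan is to exploit the disintegration afforded by the relatively p.m.p.\ hypothesis to produce a single $\Gamma$-equivariant map from $G/H$ into a space of probability measures, and then upgrade this via Theorem~\ref{T:equiv-Borel-maps} to an algebraically constrained object that witnesses $Y$ as $G/L$.

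First I would fix a representative $\mu'\in[\mu]_{G/H}$ and write $\nu'=p_*\mu'$, with the disintegration $\mu'=\int_Y\mu'_y\,d\nu'(y)$ chosen so that $\gamma_*\mu'_y=\mu'_{\gamma y}$ for every $\gamma\in\Gamma$ and $\nu'$-a.e.\ $y$. Composing with $p$ gives the $\Gamma$-equivariant Borel map
\[
\Phi:G/H\longrightarrow\Prob(G/H),\qquad \Phi(x)\df\mu'_{p(x)},
\]
which is supported on the $p$-fiber through $x$. Since $\Phi(x)=\Phi(x')$ iff $p(x)=p(x')$, once $\Phi$ is understood in algebraic terms, the factor $Y$ is immediately recovered.

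The central step is to force $\Phi$ into algebraic form. Since $\Prob(G/H)$ is not itself an $S$-algebraic $G$-space, Theorem~\ref{T:equiv-Borel-maps} cannot be applied directly; instead, I would pass to the stabilizer subgroup of each measure. For $\rho\in\Prob(G/H)$ set
\[
L_\rho\df\{g\in G:g_*\rho=\rho\},
\]
a closed subgroup of $G$. The assignment $x\mapsto L_{\Phi(x)}$ is $\Gamma$-equivariant into the space of closed subgroups of $G$; a \emph{Chevalley-type} argument realizes the relevant part of this space (subgroups containing $H$) as a countable disjoint union of $S$-algebraic $G$-varieties indexed by conjugacy type, and ergodicity of $\Gamma\acts G/H$ pins the map to a single such $S$-algebraic $G$-orbit $G/\mathcal{N}_G(L_0)$. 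Theorem~\ref{T:equiv-Borel-maps} then forces $L_{\Phi(gH)}=gL_0g^{-1}$ for a fixed closed subgroup $L_0\supseteq H$. Using uniqueness of the $L_0$-invariant probability measure on the $L_0$-orbit of $H$ in $G/H$, a parallel argument promotes this to $\Phi(gH)=g_*\mu_0$ with $\mu_0\df\Phi(H)$.

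Setting $L\df L_0$, the measure $\mu_0$ is an $L$-invariant probability on $G/H$ supported on $L/H$, so $L/H$ is compact. Hypothesis~$(*)$ is then invoked to deduce $H\triangleleft L$: otherwise the conjugation action of $L$ on $H$ would factor through a nontrivial compact $S$-algebraic quotient of $H$, contradicting $(*)$. Combining $\Phi(gH)=g_*\mu_0$ with $\Phi(gH)=\Phi(g'H)\iff gL=g'L$ identifies $Y\cong G/L$ and realizes $p$ as $gH\mapsto gL$, as claimed.

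The main obstacle will be the stabilizer-to-variety passage in the third paragraph. Carefully arranging that the conjugacy type of $L_\rho$ lives in a Borel-reasonable $S$-algebraic $G$-space so that Theorem~\ref{T:equiv-Borel-maps} applies, and then recovering the actual measure $\mu_0$ (not merely its stabilizer) from this algebraic datum, is the heart of the argument; both rely on standard facts about $S$-varieties of subgroups combined with $(*)$.
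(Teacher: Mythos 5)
Your overall strategy — disintegrate, obtain a $\Gamma$-map $G/H\to\Prob(G/H)$, and use the rigidity of such maps to pin down $Y$ — matches the paper's plan. But there is a real gap in the middle of your argument, and you have also missed a shortcut that makes the paper's proof much cleaner and that your proposal actually needs.

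You observe correctly that $\Prob(G/H)$ is not an $S$-algebraic $G$-space, so Theorem~\ref{T:equiv-Borel-maps} does not apply directly. But the paper provides Theorem~\ref{T:quasi-factors} (and more generally Theorem~\ref{T:algebraic-for-specialH} with $n=1$) precisely to handle $\Gamma$-maps into $\Prob(V)$ for $S$-algebraic $V$. Applied to $\phi(x)=\mu'_{p(x)}$ with $V=G/H$, it gives at once that $\phi(gH)=g_*\nu_0$ for a fixed $\nu_0\in\Prob\bigl((G/H)^H\bigr)$, and since $(G/H)^H\cong\mathcal{N}_G(H)/H$, the stabilizer $L$ of $\nu_0$ satisfies $H<L<\mathcal{N}_G(H)$, hence $H\triangleleft L$ with no appeal to $(*)$ beyond what is built into the quasi-factor theorem. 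The rest ($Y\cong G/L$, $L/H$ compact) then follows as you describe.

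Your proposed workaround — passing to stabilizer subgroups $L_\rho$, invoking a Chevalley-type parametrization of subgroups of $G$, and applying Theorem~\ref{T:equiv-Borel-maps} to the map $x\mapsto L_{\Phi(x)}$ — does not hold up. Stabilizers of arbitrary probability measures on $G/H$ are closed subgroups but need not be $S$-algebraic, and in general they do not range over a Borel family of points in an $S$-algebraic $G$-variety; the Chevalley-type parametrizations you have in mind apply to \emph{algebraic} subgroups (equivalently Lie subalgebras via the Grassmannian), not to arbitrary closed subgroups. So the crucial hypothesis of Theorem~\ref{T:equiv-Borel-maps} (that the target be an $S$-algebraic $G$-space) is not satisfied by your substitute map, and the step ``ergodicity pins the map to a single $S$-algebraic $G$-orbit'' is unjustified. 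Separately, your deduction of $H\triangleleft L$ from $(*)$ is not correct as stated: knowing $L/H$ is compact and that $H$ has no compact $S$-algebraic quotients does not, by itself, force $H$ to be normal in $L$; normality is obtained in the paper for free because $\nu_0$ lives on $(G/H)^H=\mathcal{N}_G(H)/H$, which forces $L\subset\mathcal{N}_G(H)$.
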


\medskip

The following result shows that under assumption (*), ergodic
measurable $\Gamma$-spaces $G/H_1$ and $G/H_2$
have no non-trivial relatively p.m.p. joinings, unless $G/H_1\cong G/H_2$.

\begin{thm}[Relatively p.m.p. joinings]
	\label{T:joinings}\hfill{}\\
	Let $\Gamma, H_1,H_2< G$ be as in Corollary~\ref{C:isom-centralizer}.
	Then the ergodic $\Gamma$-spaces $G/H_1$ and $G/H_2$ admit a relatively p.m.p. joining
	iff $H_1$ and $H_2$ are conjugate in $G$.
\end{thm}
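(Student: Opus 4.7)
The plan is to reduce the existence of a relatively p.m.p.\ joining between $G/H_1$ and $G/H_2$ to the existence of an $H_1$-invariant probability measure on $G/H_2$, and then to deduce conjugacy from the algebraic structure of such a measure under assumption (*). For the forward direction, if $H_1 = g_0 H_2 g_0^{-1}$, I would construct the joining explicitly: Corollary~\ref{C:isom-centralizer}(2) supplies a $G$-equivariant isomorphism $\Phi : G/H_1 \to G/H_2$, $gH_1 \mapsto g g_0 H_2$, and the pushforward of the Haar class on $G/H_1$ under the graph map $x \mapsto (x, \Phi(x))$ is a $\Gamma$-quasi-invariant probability measure on $G/H_1 \times G/H_2$; both of its projections are $\Gamma$-equivariant isomorphisms whose fiber disintegrations are Dirac masses and hence trivially $\Gamma$-equivariant, so this is a relatively p.m.p.\ joining.

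For the reverse direction, suppose $\nu$ is a relatively p.m.p.\ joining on $X_1 \times X_2$, where $X_i = G/H_i$. First I would extract a $\Gamma$-equivariant measure-valued map: selecting the representative of $[\nu]$ that witnesses the definition of relative p.m.p.\ along the first projection $p_1$, and disintegrating $\nu$ along $p_1$, I obtain, after identifying each fiber measure with a measure on $X_2$, a $\Gamma$-equivariant map
\[
	\phi : X_1 \longrightarrow \Prob(X_2), \qquad x_1 \longmapsto \nu_{x_1}.
\]
The crux is then to upgrade $\phi$ from a $\Gamma$-map to a $G$-map, in the spirit of Theorem~\ref{T:equiv-Borel-maps}. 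Since $\Prob(X_2)$ is not an $S$-algebraic $G$-space, Theorem~\ref{T:equiv-Borel-maps} does not apply verbatim, and I expect this to be the main obstacle; it is resolved by the parallel rigidity for quasi-factors --- item (4) of the introductory classification program --- which ensures that every $\Gamma$-equivariant map $G/H_1 \to \Prob(G/H_2)$ comes from a $G$-equivariant one. Granting this, $\mu_0 \df \phi(eH_1)$ is an $H_1$-invariant probability measure on $G/H_2$.

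For the algebraic conclusion, the existence of such a $\mu_0$, combined with assumption (*), forces its support to decompose into closed $H_1$-orbits of the form $H_1 \cdot gH_2 \cong H_1 / (H_1 \cap g H_2 g^{-1})$ carrying finite $H_1$-volume; picking any one such orbit, the stabilizer $A \df H_1 \cap g H_2 g^{-1}$ is therefore a lattice in $H_1$. Running the symmetric argument with the second projection $p_2$ in place of $p_1$, I analogously obtain that $A$ is a lattice in $g H_2 g^{-1}$. Borel density together with (*) --- which rules out the compact-factor obstructions that would otherwise permit proper strict inclusion of a lattice in a larger $S$-algebraic group --- then forces $H_1 = g H_2 g^{-1}$, so that $H_1$ and $H_2$ are conjugate in $G$.
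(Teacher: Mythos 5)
Your forward direction and the reduction of the reverse direction to a $\Gamma$-equivariant map $\phi\colon G/H_1\to\Prob(G/H_2)$, upgraded to a $G$-map via Theorem~\ref{T:quasi-factors}, match the paper. But the last paragraph contains a genuine error. Under (*), the Borel density theorem (Theorem~\ref{th:BDT}) says that an $H_1$-invariant probability measure on $G/H_2$ is supported on the \emph{fixed-point set} $(G/H_2)^{H_1}$, not merely on finite-volume $H_1$-orbits. The ``orbits'' in the support are therefore singletons, the stabilizer of any $gH_2$ in the support is all of $H_1$, and you get directly that $g^{-1}H_1g<H_2$ for some $g$. Your intermediate step ``$A=H_1\cap gH_2g^{-1}$ is a lattice in $H_1$'' is either vacuous ($A=H_1$) or reflects a misreading of what Borel density gives. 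In fact Theorem~\ref{T:quasi-factors} already asserts $V^{H_1}\ne\emptyset$, so no further measure-theoretic analysis is needed at this stage.

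The symmetric step is also broken. Swapping the roles of the two projections gives an $H_2$-invariant measure on $G/H_1$ and hence a \emph{different} conjugation: some $g'$ with $g'^{-1}H_2g'<H_1$. There is no reason the same intersection $A$ appears in both directions, so ``analogously obtain that $A$ is a lattice in $gH_2g^{-1}$'' does not follow. What you actually have after the symmetric step is mutual containment of conjugates: $H_1$ sits inside a conjugate of $H_2$ and $H_2$ inside a conjugate of $H_1$. The correct conclusion, which the paper states in one line (``since the $H_i$ are $S$-algebraic, this implies $H_1$ and $H_2$ are conjugate''), is the Noetherian argument: from $H_1<gH_2g^{-1}$ and $H_2<g'H_1g'^{-1}$ one gets $H_1<(gg')H_1(gg')^{-1}$, and the decreasing chain of conjugates $(gg')^{-n}H_1(gg')^n$ stabilizes by dimension and component count, forcing equality and hence $H_1=gH_2g^{-1}$. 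Your closing appeal to ``Borel density ruling out proper strict inclusion of a lattice in a larger $S$-algebraic group'' is both attacking the wrong statement (you don't have a lattice, you have a subgroup) and substituting a vague heuristic for this concrete algebraic-group argument.
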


\medskip


\begin{thm}[Quasi-factors]
	\label{T:quasi-factors}\hfill{}\\
	Let $\Gamma, H< G$ and $V$ be as in Theorem~\ref{T:equiv-Borel-maps}, and let $\phi:G/H\to \Prob(V)$
	be a $\Gamma$-quasi-factor, where $V$ is equipped with the $\Gamma$-quasi-invariant
	measure
	\[
	\nu=\int_{G/H} \phi_{gH}\,d\mu_{G/H}.
	\]
	Then $V^H\ne\emptyset$ and $\phi_{gH}=g\cdot \nu_0$
	for some fixed $\nu_0\in \Prob(V^H)$.
\end{thm}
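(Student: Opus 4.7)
The plan is to view Theorem~\ref{T:quasi-factors} as a $\Prob(V)$-valued analogue of Theorem~\ref{T:equiv-Borel-maps}. Since $\Prob(V)$ is not itself $S$-algebraic, Theorem~\ref{T:equiv-Borel-maps} does not apply directly; the idea is to compose $\phi$ with sufficiently many $G$-equivariant Borel maps $\iota_W\colon \Prob(V)\to W$ into $S$-algebraic $G$-spaces $W$ so that the family $\{\iota_W\}$ separates probability measures on $V$. A convenient construction is, for each finite-dimensional $G$-invariant subspace $W$ of continuous functions on an equivariant compactification of $V$, to take the moment pairing $\iota_W(\mu)(w)=\int w\,d\mu$; this lands in the dual $W^{*}$, which is a finite-dimensional $G$-representation and hence an $S$-algebraic $G$-space.

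Applying Theorem~\ref{T:equiv-Borel-maps} to $\iota_W\circ\phi$ then produces some $\alpha_W\in (W^{*})^H$ with $\iota_W(\phi_{gH})=g\cdot\alpha_W$ for a.e.\ $gH$, and in particular each such composition is $G$-equivariant. Letting $W$ range over a cofinal family of such subspaces and invoking the separation property, one assembles the family $\{\alpha_W\}$ into a unique $\nu_0\in\Prob(V)$ with $\iota_W(\nu_0)=\alpha_W$ for every $W$; the $H$-invariance of $\nu_0$ follows from $\alpha_W\in (W^{*})^H$, and by construction $\phi_{gH}=g_{*}\nu_0$ for a.e.\ $gH$. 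In particular $\phi$ is $G$-equivariant and $\Prob(V)^H\neq\emptyset$.

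The remaining task is to show any such $\nu_0\in\Prob(V)^H$ is supported on $V^H$, which is where hypothesis~(*) enters. By ergodic decomposition for the $H$-action on $(V,\nu_0)$, one reduces to the case of an $H$-ergodic $\nu_0$ concentrated on a single $S$-algebraic orbit $Hv\cong H/H_v$, with $H_v$ an $S$-algebraic subgroup. The existence of an $H$-invariant probability measure on this homogeneous space forces it to be compact, which in the $S$-algebraic setting produces a non-trivial compact $S$-algebraic factor group of $H$ unless $H_v=H$. Hypothesis~(*) thus forces $v\in V^H$, so $\nu_0\in\Prob(V^H)$.

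The main obstacle I anticipate is in the first step, namely producing enough $G$-equivariant finite-dimensional function spaces on $V$ to separate probability measures in the $S$-adic setting. For the non-Archimedean factors the usual polynomial moment construction no longer suffices, and one must either switch to $G$-invariant spaces of locally constant functions on a suitable compactification, or replace moments altogether by a Chow-style parametrization of Zariski closures of supports. Once this reduction is in place, the assembly step and the concluding appeal to (*) follow by essentially formal arguments from Theorem~\ref{T:equiv-Borel-maps} and the standard Furstenberg-type analysis of invariant probability measures for algebraic actions.
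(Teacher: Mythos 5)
Your proposal takes a genuinely different route from the paper, but it has a real gap at the very step you flag, and I do not see how to fill it.

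The paper's proof does not linearize $\Prob(V)$. Instead, it first extends the Borel density theorem directly to the iterated probability spaces $\Prob^n(V)$: Theorem~\ref{th:BDT} proves $\Prob^n(V)^H=\Prob^n(V^H)$ by induction, using the barycenter map $\Prob(\Prob^n(V))\to\Prob^n(V)$ at the inductive step. Then Corollary~\ref{c:bdt} upgrades this to the statement that any $H$-map from a finite $H$-invariant measure space into $\Prob^n(V)$ lands a.e.\ in $\Prob^n(V^{\Rdc(H)})$, and finally Frobenius reciprocity (Corollary~\ref{c:frob}) converts the $\Gamma$-map $G/H\to\Prob(V)$ into an $H$-map $G/\Gamma\to\Prob(V)$. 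In other words, the target space $\Prob(V)$ is never squeezed into an $S$-algebraic variety; the Borel density argument is carried out \emph{inside} $\Prob^n(V)$. This is Theorem~\ref{T:algebraic-for-specialH} with $n=1$, and the desired theorem falls out immediately.

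Your linearization step is the genuine obstruction, and it does not look repairable along the lines you sketch. First, for non-compact $V$ the functionals $\mu\mapsto\int w\,d\mu$ against polynomial or other unbounded regular functions are not even defined on all of $\Prob(V)$, and an equivariant compactification of $V$ will not in general again be an $S$-algebraic $G$-space, so Theorem~\ref{T:equiv-Borel-maps} no longer applies to $\iota_W\circ\phi$. Second, in the non-Archimedean factors one has no reasonable supply of finite-dimensional $G$-invariant function spaces that separate measures; the Chow-style alternative you mention only records Zariski closures of supports, which does not determine a probability measure. Third, even granting the separating family, the assembly of the collection $\{\alpha_W\}$ into a single $\nu_0\in\Prob(V)$ (rather than a finitely additive set function, or an element of some larger dual) would need a genuine tightness/Prokhorov-type argument that is not formal and that you do not supply. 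By contrast, the barycenter argument in Theorem~\ref{th:BDT} sidesteps all of these issues at once, which is precisely why the paper organizes the result as it does.

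Your final step (an $H$-invariant probability measure on an $S$-algebraic $H$-space is supported on $V^H$ when $H$ has no compact $S$-algebraic factors) is exactly the $n=1$ instance of Theorem~\ref{th:BDT}, which the paper cites as known (Shalom's version of Borel density). Your sketch via ergodic decomposition onto a single orbit $H/H_v$ and a compactness argument has a subtle point: you assert that compactness of $H/H_v$ with $H_v$ proper yields a nontrivial compact $S$-algebraic factor of $H$, but this inference is false without using the invariant measure (consider $\mathrm{SL}_2(\R)/B\cong\mathbb{P}^1$); you need the existence of a finite \emph{invariant} measure, and at that point you are essentially re-deriving Borel density rather than invoking a simpler compactness lemma. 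This part can be repaired by simply citing the known theorem, but it is worth being precise about what is being used.
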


\medskip

In short, the results above show that certain classes of measurable $\Gamma$-maps
on a $G$-homogeneous space $G/H$ are necessarily $G$-maps, and
therefore can be explicitly described using transitivity
of the $G$-action.
These results depend on the assumption that the $S$-algebraic subgroup
$H<G$ has no $S$-algebraic compact factors.
In the case of a general $S$-algebraic subgroup $H<G$, measurable
$\Gamma$-maps as above need not be $G$-maps.
We shall show, however, that such maps are $M$-maps, where $M$ is some
closed cocompact subgroup of $G$ containing $\Gamma$
and acting transitively on $G/H$. Moreover, $M$ is a \emph{fat
  complement} of $H$ in $G$ as described in
Definition~\ref{d:fat}.

\begin{thm}[Quasi-factors, general case]
	\label{T:general-quasi-factors}\hfill{}\\
	Let $G$ be an $S$-algebraic group, $H<G$ an $S$-algebraic
        subgroup, $\Gamma<G$ a lattice and assume that
	$G/H$ is an ergodic $\Gamma$-space.
	Let $V$ be an $S$-algebraic $G$-space with a
        $\Gamma$-quasi-factor $\phi:G/H\to \Prob(V)$,
	where $V$ is equipped with the $\Gamma$-quasi-invariant measure
	\[
		\nu=\int_{G/H} \phi_{gH}\,d\mu_{G/H}.
	\]
	Then there exists a closed cocompact subgroup $M<G$,
        containing $\Gamma$, acting transitively on $G/H$
	(and being a fat complement of $H$ as in Definition~\ref{d:fat}),
	so that $V^{M\cap H}\ne\emptyset$, and a probability measure $\nu_0$ supported on $V^{M\cap H}$, such that for a.e. $m \in M$,
	\[
	\phi_{mH}=m\cdot\nu_0. 
	\]
Moreover, if $G$ satisfies (*) then $M$ has finite index in $G$.
\end{thm}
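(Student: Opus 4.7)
The strategy is to reduce to Theorem~\ref{T:quasi-factors} by constructing an intermediate closed subgroup $\Gamma\subseteq M\subseteq G$, transitive on $G/H$, for which $M\cap H$ inherits property $(*)$. Once such an $M$ is in hand, applying Theorem~\ref{T:quasi-factors} to the triple $(M,M\cap H,\Gamma)$ acting on $V$ via the inclusion $M\hookrightarrow G$ yields precisely the claimed description of $\phi$.

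First I would isolate the obstruction. Let $H^\circ\triangleleft H$ be the kernel of the projection of $H$ onto its maximal $S$-algebraic compact factor $K=H/H^\circ$; then $H^\circ$ satisfies $(*)$, and the natural projection $G/H^\circ\to G/H$ has compact fibers homeomorphic to $K$. The compact group $K$ acts on $V$ via the embedding $H\hookrightarrow G$, but this action need not preserve the measure class of $\phi_{eH}$, and this is precisely why assumption $(*)$ fails to apply directly to $\phi$.

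The second step is to produce $M$ as a \emph{fat complement} of $H$ in the sense of Definition~\ref{d:fat}: the natural candidate is the largest closed subgroup of $G$ containing $\Gamma$ with $MH=G$ and $M\cap H=H^\circ$. Granted such an $M$, cocompactness follows from the identification $G/M\cong H/(M\cap H)=K$, and ergodicity of $M\acts G/H$ is inherited from $\Gamma\acts G/H$ via $\Gamma\subseteq M$. Applying Theorem~\ref{T:quasi-factors} to $(M,H^\circ,\Gamma)$ with $V$ as a Borel $M$-space then produces $\nu_0\in\Prob(V^{H^\circ})=\Prob(V^{M\cap H})$ with $\phi_{mH}=m\cdot\nu_0$ for a.e.\ $m\in M$. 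Under the additional hypothesis that $G$ itself satisfies $(*)$, the compact quotient $G/M$ cannot support a non-trivial continuous $G$-action compatible with the $S$-algebraic structure of $G$, which forces $M$ to have finite index in $G$.

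The main obstacle is the construction of $M$: upgrading the measurable $\Gamma$-equivariance of $\phi$ into an algebraic subgroup structure on $G$. This is the central measurable-to-algebraic bootstrapping of the paper, and I expect it to rely on the algebraicity of stabilizers or cocycles for the auxiliary $\Gamma$-quasi-invariant measure $\int_{G/H}\delta_{gH}\otimes\phi_{gH}\,d\mu_{G/H}$ on $G/H\times V$; the subgroup $M$ should emerge as the algebraic hull of this dynamical system, with the fat-complement property following from a matching argument between the compact factor $K$ of $H$ and the compact part of $G/M$.
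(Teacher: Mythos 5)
Your proposal gestures at the right intermediate object---a closed subgroup $M$ with $\Gamma<M<G$, transitive on $G/H$, with $M\cap H$ cocompact in $H$---but it has two genuine gaps, one of which you flag yourself and one of which would make the reduction step fail even if the construction were granted.

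The first gap is that $M$ is never actually produced, and the proposal you sketch for finding it is not the right one. You propose taking ``the largest closed subgroup of $G$ containing $\Gamma$ with $MH=G$ and $M\cap H=H^\circ$,'' but there is no reason such a maximum exists as an abstract group-theoretic object, and in fact $M\cap H$ need not equal $\Rdc(H)$ in the correct statement; what the definition of fat complement requires is only that $\Na(M)\cap H$ be cocompact in $H$, which is weaker. The actual engine of the paper is entirely missing from your outline: Frobenius reciprocity (Corollary \ref{p:duality}) converts the given $\Gamma$-map $\phi:G/H\to\Prob(V)$ into an $H$-map $i:G/\Gamma\to\Prob(V)$, and it is only on the \emph{dual} side that one has a finite $H$-invariant measure (the Haar measure on $G/\Gamma$), so that Borel density (Corollary \ref{c:bdt}), the Mautner envelope (Theorem \ref{th:mau}), and ergodic decomposition (Proposition \ref{p:erg_comp}) can be brought to bear. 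The subgroup $M$ then emerges as the stabilizer of a point in the space of ergodic components $X\ec\Na\bigl(\MT{\Rdc(H)}\bigr)$, not as an abstract maximal complement. Without the duality step there is no finite invariant measure anywhere in your picture, since $\mu_{G/H}$ is typically infinite, and the density/recurrence machinery has no foothold.

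The second gap is that the reduction you propose cannot be carried out as stated: applying Theorem~\ref{T:quasi-factors} to the triple $(M, M\cap H, \Gamma)$ requires $M$ to be an $S$-algebraic group and $M\cap H$ to be an $S$-algebraic subgroup of it, but $M$ is in general only a closed subgroup of $G$. Indeed, Section~\ref{s:ex} of the paper exhibits an example where $M$ is \emph{not} f.i.-algebraic, so this reduction is not merely technically obstructed but actually impossible. The correct route, as above, is to reprove the conclusion directly via the relative Borel density theorem (Theorem~\ref{th:admiss}), which lives on the $G/\Gamma$ side and produces the measure-preserving factor $G/\Gamma\to G/M$ together with the factorization of $i$ through it; Theorem~\ref{T:general-quasi-factors} then falls out via Corollary~\ref{c:frob}. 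Your final claim, that (*) on $G$ forces $[G:M]<\infty$ because ``$G/M$ cannot support a nontrivial continuous $G$-action compatible with the $S$-algebraic structure,'' is also not a proof: $G/M$ is compact and $M$ is not algebraic, so there is no immediate contradiction; this is precisely the content of the nontrivial Theorem~\ref{t:M}, which requires a Lie-algebra argument via Malcev's lemma.
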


\medskip

When $M$ acts transitively on $G/H$, for any $g \in G$ there is $m \in M$ such that $gH=mH$, and any two such elements of $M$ differ by right-multiplication by an element of $M \cap H$. Therefore for $v \in V^{M\cap H}$, the map $f_v(gH)=m\cdot v$ is well-defined and is an $M$-map; and clearly all $M$-maps $G/H \to V$ arise in this way.

\begin{cor}[Equivariant maps, general case]
	\label{C:general-maps}\hfill{}\\
	Let $\Gamma, H<G$ and $V$ be as above. Then there exists $M<G$ as in Theorem~\ref{T:general-quasi-factors}, so that
	\[
		\Map_\Gamma(G/H,V)=\Map_M(G/H,V)\cong V^{M\cap H}.
	\]	
\end{cor}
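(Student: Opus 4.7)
The plan is to realize each $\Gamma$-equivariant Borel map $f:G/H\to V$ as a degenerate (Dirac-valued) quasi-factor and then invoke Theorem~\ref{T:general-quasi-factors}. First, given $f\in\Map_\Gamma(G/H,V)$, I would define $\phi:G/H\to\Prob(V)$ by
\[
\phi_{gH}\df \delta_{f(gH)}.
\]
The composition of $f$ with the Dirac embedding $V\hookrightarrow\Prob(V)$ is Borel, and the identity $g_*\delta_v=\delta_{g\cdot v}$ makes $\phi$ a $\Gamma$-quasi-factor in the sense of Section~\ref{sub:relatively_p_m_p_joinings_and_quasi_factors}, with associated measure $\nu=f_*\mu_{G/H}$.

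Next, I would apply Theorem~\ref{T:general-quasi-factors} to $\phi$, obtaining a closed cocompact subgroup $M<G$ containing $\Gamma$, transitive on $G/H$ and a fat complement of $H$, together with a probability measure $\nu_0$ supported on $V^{M\cap H}$ such that
\[
\delta_{f(mH)}=\phi_{mH}=m\cdot\nu_0\qquad\text{for a.e. }m\in M.
\]
Since the left-hand side is a Dirac measure and the $G$-action on $\Prob(V)$ preserves the set of Dirac masses, one concludes $\nu_0=\delta_{v_0}$ for a single $v_0\in V^{M\cap H}$. Hence $f(mH)=m\cdot v_0$ for a.e. $m\in M$, which shows that $f$ agrees a.e. with the $M$-map $f_{v_0}$ defined in the remark preceding the corollary. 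This gives the inclusion $\Map_\Gamma(G/H,V)\subseteq\Map_M(G/H,V)$; the reverse inclusion is automatic because $\Gamma<M$, and the bijection $V^{M\cap H}\cong \Map_M(G/H,V)$ via $v\mapsto f_v$ follows from the transitivity of $M$ on $G/H$ together with the fact that two elements of $M$ with the same image in $G/H$ differ by right multiplication by an element of $M\cap H$.

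The main obstacle is \emph{uniformity} of $M$: a priori the theorem furnishes a subgroup $M_f$ for each map $f$, whereas the corollary asserts a single $M$ works for the entire set $\Map_\Gamma(G/H,V)$. The resolution should come from the intrinsic nature of the fat complement (Definition~\ref{d:fat}): $M$ is supposed to be determined by the pair $(G,H)$ (and the dynamics of $\Gamma$ on $G/H$), not by the particular quasi-factor. I would therefore verify that the $M$ produced by Theorem~\ref{T:general-quasi-factors} can be taken to be a canonical maximal fat complement of $H$ containing $\Gamma$, so that the argument of the previous paragraph applies uniformly to every $f\in\Map_\Gamma(G/H,V)$. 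Under assumption (*), $M$ has finite index in $G$, whence the theorem in that setting recovers Theorem~\ref{T:equiv-Borel-maps}.
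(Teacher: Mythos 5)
Your Dirac-embedding idea---post-composing $f:G/H\to V$ with $v\mapsto\delta_v$ to get a quasi-factor $\phi_{gH}=\delta_{f(gH)}$, then invoking Theorem~\ref{T:general-quasi-factors} and reading off $\nu_0=\delta_{v_0}$ from the fact that the $G$-action on $\Prob(V)$ preserves Dirac masses---is a correct and clean way to pass from the $n=1$ case back to the $n=0$ case. That much of the argument is fine.

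The problem is the uniformity issue that you yourself flag, and it is not a loose end you can defer: without it you do not get the set equality $\Map_\Gamma(G/H,V)=\Map_M(G/H,V)$ for a single $M$, only the weaker claim that each $f$ is an $M_f$-map for some $M_f$ depending on $f$. The statement ``I would therefore verify that the $M$ produced by Theorem~\ref{T:general-quasi-factors} can be taken to be a canonical maximal fat complement'' is not a proof; and it is also not quite the right target, because the $M$ in the theorem is not characterized by any extremality property among fat complements. If you look at how Theorem~\ref{T:general-quasi-factors} is actually proved, it and Corollary~\ref{C:general-maps} are the $n=1$ and $n=0$ cases of a single statement, Corollary~\ref{c:mainthm}, which in turn rests on Theorem~\ref{th:admiss}. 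There the group $M$ is built once and for all from $G$, $H$, and $X=G/\Gamma$: one takes the f.i.-algebraic kernel $N=\Na(\MT{\Rdc(H)})$ and identifies $X\ec N$ with $G/M$; the pair $(M,\pi)$ is produced \emph{before} any $V$, $n$, or $H$-map $i$ enters the picture, and the factorization $i=j\circ\pi$ then holds for every such $i$. That is precisely what makes $M$ uniform. So your route back through the proof of Theorem~\ref{T:general-quasi-factors} would work, but it collapses into the paper's own argument; as a free-standing deduction of Corollary~\ref{C:general-maps} from the \emph{statement} of Theorem~\ref{T:general-quasi-factors}, the proof is incomplete. (One small further caveat: your closing remark that under (*) the result ``recovers'' Theorem~\ref{T:equiv-Borel-maps} needs care, since finite index of $M$ does not by itself give $\Map_M=\Map_G$; Theorem~\ref{T:equiv-Borel-maps} uses (*) for $H$, which forces $M=G$.)
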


\begin{cor}[Relatively p.m.p. factors, general case]
	\label{C:general-factors}\hfill{}\\
	Let $\Gamma, H<G$ be as above, and $p:G/H\to Y$ be a relatively p.m.p. $\Gamma$-factor.
		Then there exists $M<G$ as in Theorem~\ref{T:general-quasi-factors}, a closed subgroup $M\cap H\triangleleft L<M$
	so that $L/(M\cap H)$ has a finite $L$-invariant measure, $Y\cong M/L$, and
	\[
		p:G/H\to Y\cong M/L\qquad \text{is\ given\ by}\qquad mH\mapsto mL\qquad (m\in M).
	\]
 \end{cor}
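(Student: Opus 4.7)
The strategy is to reduce the corollary to Theorem~\ref{T:general-quasi-factors} applied to the self-quasi-factor of $G/H$ coming from the disintegration of $\mu$ along $p$. I fix a $\Gamma$-equivariant disintegration $y\mapsto \mu_y$ of $\mu$ with respect to $\nu$, so that $\mu=\int \mu_y\,d\nu(y)$ and $\mu_y(p^{-1}(y))=1$ for $\nu$-a.e.\ $y$. Setting
\[
\phi\colon G/H\longrightarrow \Prob(G/H),\qquad \phi(x)=\mu_{p(x)},
\]
a short Fubini computation yields $\mu=\int\phi(x)\,d\mu(x)$, so $\phi$ is a $\Gamma$-quasi-factor of $(G/H,\mu)$ in the sense of Subsection~\ref{sub:relatively_p_m_p_joinings_and_quasi_factors}.

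Applying Theorem~\ref{T:general-quasi-factors} with $V=G/H$ then produces a closed cocompact subgroup $M<G$ containing $\Gamma$, and a probability measure $\nu_0$ on $(G/H)^{M\cap H}$ with $\phi(mH)=m\cdot \nu_0$ for a.e.\ $m\in M$. I define $L:=\{m\in M: m\cdot\nu_0=\nu_0\}$, a closed subgroup of $M$ which automatically contains $M\cap H$ (the latter fixes $\nu_0$ because $\nu_0$ is carried by the $(M\cap H)$-fixed locus).

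To identify $Y$ with $M/L$, I use that the disintegration $y\mapsto\mu_y$ is essentially injective: if $y_1\ne y_2$ then the probability measures $\mu_{y_1}$ and $\mu_{y_2}$ are concentrated on the disjoint fibers $p^{-1}(y_1)$ and $p^{-1}(y_2)$ and so cannot agree. Therefore $p(m_1H)=p(m_2H)$ iff $m_1\cdot\nu_0=m_2\cdot\nu_0$ iff $m_1L=m_2L$. This induces a measurable $\Gamma$-equivariant bijection $Y\cong M/L$ intertwining $p$ with the projection $mH\mapsto mL$, and endows $Y$ with an $M$-action extending the $\Gamma$-action for which $p$ is $M$-equivariant.

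The last and most delicate part---the one I expect to absorb the real work---is the normality $M\cap H\triangleleft L$ together with the existence of a finite $L$-invariant measure on $L/(M\cap H)$. Taking $y_0=eL$, the fiber $p^{-1}(y_0)$ equals $L/(M\cap H)\subseteq M/(M\cap H)=G/H$, and $\mu_{y_0}=\nu_0$ is an $L$-invariant probability measure supported on it. Since $L$ acts transitively on $L/(M\cap H)$, any $L$-invariant probability measure must have full support and agrees up to scaling with the unique $L$-invariant measure in the Haar class, so $\nu_0$ provides the required finite invariant measure on $L/(M\cap H)$. Full support forces every $lH$ with $l\in L$ to lie in $(G/H)^{M\cap H}$, which unwinds to $l^{-1}(M\cap H)l\subseteq H\cap M=M\cap H$ for every $l\in L$, yielding the normality and completing the identification $p\colon G/H\to Y\cong M/L$ as claimed.
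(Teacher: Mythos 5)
Your proposal is correct and matches the paper's intended argument: the paper's proof of Corollary~\ref{C:general-factors} simply says to mimic the deduction of the factor theorem from the quasi-factor theorem (the text says ``Theorem~\ref{T:joinings}'' but clearly means Theorem~\ref{T:factors}), and you have carried out exactly those steps in the general setting, with $M$, $M\cap H$, and the stabilizer $L<M$ playing the roles of $G$, $H$, and the stabilizer $L<G$. The one small redundancy is the appeal to uniqueness of Haar measure in the last paragraph---the fact that $\nu_0$ is already an $L$-invariant probability measure supported on $L/(M\cap H)$ is itself the required finite invariant measure, and full support follows directly from transitivity of $L$ on that fiber plus closedness and nonemptiness of the support.
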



\subsection{Previous results} 
\label{sub:previous}
The general inspiration for the questions considered here is the pioneering work of Marina Ratner
\cite{r1, r2, r3} where questions of measurable isomorphism,
classification of factors and joinings where
studied for actions of unipotent subgroup $H<G$ on $G/\Gamma$, where
$\Gamma$ is a lattice in a Lie group $G$
(these and more general results can also be deduced from the general
Ratner's theorem \cite{r4}, see also \cite{Wi}).
The assumption that $H<G$ is unipotent is very important for these results, as they fail for
the diagonal subgroup $A<\SL_2(\R)$.
It should also be emphasized that these are probability measure-preserving actions.

Questions of measure-theoretic rigidity for the $\Gamma$-action on infinite measure homogeneous spaces $G/H$
were addressed by Shalom and Steger in \cite{Shalom+Steger}.
In this (unpublished) work the authors obtained a number of rigidity results using representation theoretic techniques;
including rigidity of lattices in $\SL_2(\R)$ acting on $\R^2$, classification of relatively p.m.p.
$\Gamma$-factors of $\R^n$ where $\Gamma<\SL_n(\R)$ is a lattice, etc.

In \cite{Furman} further results were obtained for $\Gamma$-actions on $G/H$ using purely dynamical methods
(\emph{alignment} property). These results included classification of centralizers, relatively p.m.p. quotients,
and joinings for a particular type of homogeneous space $G/H$. In particular, in \cite{Furman}
the group $G$ was assumed to be semi-simple, $H<G$ to be ``super-spherical",
and $G/H$ to carry an infinite $G$-invariant measure.

The present paper provides a more systematic study of $\Gamma$-actions on $G/H$, reproving most of the
results of \cite{Furman} and \cite{Shalom+Steger} and significantly expanding the scope of the spaces $G/H$ and of the questions.
In particular, we impose no restrictions on $G$ and $H$ except for being $S$-algebraic. For instance, even the very special case $G=\SL_2(\R), \, H_1=H_2=A$ (the group of diagonal matrices) of Corollary \ref{C:isom-centralizer} is new.
The new questions include equivariant maps to $S$-algebraic $G$-spaces (Theorems~\ref{T:equiv-Borel-maps}
and Corollary \ref{C:general-maps})
and classification of quasi-factors (Theorems~\ref{T:quasi-factors} and \ref{T:general-quasi-factors}).

Our results bear some similarity to Margulis's factor theorem \cite{Margulis-factor} that asserts
that all measurable $\Gamma$-equivariant
quotients of $G/P$ are $G$-equivariant, and therefore have the form $G/P\to G/Q$, $gP\mapsto gQ$, where $P<Q<G$
are parabolic subgroups.
However, the similarity is only superficial, as the context, phenomenology, and the idea of the proof are completely
different.
Margulis's factor theorem concerns \emph{higher rank} semi-simple group $G$ and a \emph{parabolic} subgroup $P<G$,
but the quotient maps are not assumed to be (and never are) relatively p.m.p.
Similarly for related work on factor theorems by Zimmer \cite{Zimmer-factor}, Nevo-Zimmer \cite{Nevo+Zimmer},
and Bader-Shalom \cite{Bader+Shalom}.


\subsection{Some ideas in the proofs} 
\label{sub:some_ideas_in_the_proofs}\hfill{}\\
If $H$ is a subgroup of $G$, one may {\em restrict} any $G$-space to obtain an $H$-space, and there is a complementary operation of {\em inducing} an $H$-space to obtain a $G$-space. These operations have been extensively studied in other contexts in representation theory and ergodic theory. In \S \ref{s:frob} we adapt them to our framework and establish an analogue of Frobenius reciprocity, which allows us to convert questions about $\Gamma$-maps, where $\Gamma$ is a lattice, to questions about $H$-maps, where $H$ is algebraic. Then, to study algebraic actions, we apply generalizations of the classical Borel density theorem \cite{Borel-density}, which says that if $G$ is a semi-simple $S$-algebraic
group with no compact factors and $\Gamma<G$ is a lattice, then $\Gamma$ is Zariski dense in $G$.
This theorem was generalized by a number of authors
(see \cite{Dani,Furst-density,Shalom-density, Wa1}), and it was realized that this phenomenon is related to
the classification of $G$-invariant probability measures on $V$, where $V$ is an $S$-algebraic $G$-space,
leading to a more abstract version which says that every such measure must be supported
on the set of fixed points, i.e.
\[
	\Prob(V)^{G}=\Prob(V^{G}).
\]
We obtain one such generalization (Theorem \ref{th:BDT}) as a straightforward corollary of previous work.
Frobenius reciprocity and Borel density are already enough to imply our results in case $H$ satisfies (*).
To obtain our results in the general case, we need a more general version of Borel density (Theorem \ref{th:admiss}), which we call {\em relative Borel density.}
Theorem \ref{th:admiss} is the main technical innovation in our paper. Its proof relies on classical arguments, along with a detailed study of the Mautner envelope and a general version of ergodic decomposition.


\subsection{Some additional results} 
\label{sub:some_additional_results}\hfill{}\\
Our discussion so far has focused on actions of \emph{lattices} $\Gamma<G$ on homogeneous
spaces $G/H$ where $H$ is an $S$-algebraic subgroup of an $S$-algebraic group $G$.
In fact, we have only used the fact that $G/\Gamma$ carries a finite $G$-invariant
measure and the results apply verbatim to actions of not necessarily discrete closed subgroups $L$ of $G$
provided $G/L$ carries a \emph{finite} $G$-invariant measure. Such subgroups $L<G$ are said to be of
\emph{finite covolume} in $G$.

However, the finite covolume assumption is not necessary for the results as above to hold;
for it is primarily used in arguments involving Borel's density theorem,
where existence of finite invariant measure provides recurrence via Poincar\'e recurrence theorem.
In some situations one can exploit recurrence phenomena for actions of subgroups $\Gamma<G$
of \emph{infinite covolume} on some $G/H$ to prove results analogous to
\cite[Theorem B]{Furman}.
For an example of such a situation,
let $G$ be
the group of orientation preserving isometries of a symmetric space of rank one
$\Hypsp^n_K$, with $K=\R$, or $K=\mathbb{C}$. 
Such $G$ acts transitively on the space of pairs of distinct points at
the boundary $\partial\Hypsp_K^n$
\[
	\partial^{2}\Hypsp^n_K=\partial \Hypsp^n_K\times\partial\Hypsp^n_K\smallsetminus\Delta.
\]
Hence the latter space can be identified with $G/A$ where $A$ is the Cartan subgroup of $G$.
We shall consider $\partial^{2}\Hypsp^n_K\cong G/A$ equipped with the $G$-invariant measure class.
\begin{thm}\label{cor: non-lattices1}
Let $n\ge 2$ and $G={\rm Isom}_+(\Hypsp^n_K)$, and $\Gamma<G$ be a discrete subgroup
with an ergodic action on $G/A\cong \partial^{2}\Hypsp^n_K$.
Then
\[
	\Map_\Gamma(G/A, G/A)=\Map_G(G/A,G/A)=\Aut_G(G/A)\cong\mathcal{N}_G(A)/A=\{1,F\}
\]
where $F$ denotes the flip $F:(x,y)\mapsto (y,x)$ on $\partial^{2}\Hypsp^n_K$.
\end{thm}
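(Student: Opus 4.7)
The plan is to follow the graph joining argument developed in the lattice case of Corollary \ref{C:isom-centralizer}, adapted to the present non-lattice rank-one setting by substituting the Poincar\'e recurrence that is used there, and that comes from finite covolume, with the conservativity that is forced here by the ergodicity hypothesis itself.

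Given $\phi \in \Map_\Gamma(G/A, G/A)$, I would first form the graph joining: the $\Gamma$-quasi-invariant measure class $[\nu]$ on $G/A \times G/A$ obtained as the pushforward of the Haar measure class on $G/A$ under $x \mapsto (x, \phi(x))$. Since $\phi$ is $G$-equivariant if and only if $[\nu]$ is $G$-invariant, it suffices to promote the $\Gamma$-invariance of $[\nu]$ to $G$-invariance. Once that is done, the conclusion follows from the $G$-orbit structure on $G/A \times G/A \cong \partial^2 \Hypsp^n_K \times \partial^2 \Hypsp^n_K$: the only $G$-orbits that project bijectively onto the first factor, equivalently, those that can carry the graph of a measurable $G$-equivariant map, are parametrized by $\mathcal{N}_G(A)/A$ (since the stabilizer of $(A, nA)$ equals $A \cap n A n^{-1}$, which coincides with $A$ only when $n \in \mathcal{N}_G(A)$, and in rank one any $n$ with $n^{-1}An \subseteq A$ must already normalize $A$). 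In rank one this normalizer quotient equals $\{1, F\}$, so the diagonal and the anti-diagonal are the only options, giving $\phi \in \{\Id, F\}$.

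To upgrade $\Gamma$-invariance of $[\nu]$ to $G$-invariance, I would retrace the Frobenius reciprocity and Borel density arguments that underlie the proofs of Theorem \ref{T:equiv-Borel-maps} and Theorem \ref{T:general-quasi-factors}. The lattice hypothesis on $\Gamma$ enters those proofs only to supply Poincar\'e recurrence for the $A$-action on $\Gamma \backslash G$. In the present setting, the ergodicity of $\Gamma \acts G/A$ is equivalent, via the classical Hopf-type duality between $\Gamma \acts G/A$ and $A \acts \Gamma \backslash G$, to ergodicity of the geodesic flow on $\Gamma \backslash G$; and by the Hopf-Tsuji-Sullivan dichotomy for discrete subgroups of rank-one Lie groups, this ergodicity already forces conservativity of the $A$-flow. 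Hence the recurrence input required by the relative Borel density theorem (Theorem \ref{th:admiss}) remains available even when $\Gamma$ has infinite covolume.

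The main obstacle is the technical verification that the Frobenius and Borel density steps of the paper really depend only on conservativity of $A \acts \Gamma \backslash G$ rather than on the full strength of finite covolume. In particular, the Mautner envelope construction underlying Theorem \ref{th:admiss} must be revisited to confirm that a conservative but infinite $A$-invariant measure on $\Gamma \backslash G$ is enough to drive the Borel density step. Once this is in place, the remainder of the argument, including the orbit classification on $\partial^2 \Hypsp^n_K \times \partial^2 \Hypsp^n_K$ via boundary cross-ratios and the identification $\Map_G(G/A, G/A) = \Aut_G(G/A) \cong \mathcal{N}_G(A)/A$, is routine.
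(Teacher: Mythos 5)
Your proposal correctly identifies the relevant dynamical input (Hopf's dichotomy, hence conservativity of the $A$-action on $\Gamma\backslash G$), but the route you outline does not work, and the obstacle you flag at the end is in fact a genuine gap, not a technicality to be verified.

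The difficulty is that the relative Borel density theorem (Theorem \ref{th:admiss}) and the Borel density theorem feeding it (Theorem \ref{th:BDT}, via Corollary \ref{c:bdt}) genuinely require a $G$-\emph{invariant probability measure} on the space $X$, not merely conservativity of an infinite invariant measure. Both the barycenter argument in the proof of Theorem \ref{th:BDT} and the ergodic-decomposition/Mautner-envelope step in Theorem \ref{th:admiss} are built on the finiteness of $\xi$. When $\Gamma$ has infinite covolume there is no finite $G$-invariant measure on $\Gamma\backslash G$, and conservativity of the $A$-flow does not substitute for it. So ``retracing the Borel density arguments'' cannot be made to go through; the lattice hypothesis enters those arguments through the existence of a finite invariant measure, not merely through Poincar\'e recurrence.

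The paper's proof sidesteps Borel density entirely. It applies the Frobenius duality of Corollary \ref{p:duality} directly, converting $\Map_\Gamma(G/A,G/A)$ into $\Map_A(G/\Gamma, G/A)$, and then studies an $A$-map $\Phi:G/\Gamma\to G/A$. Pushing the Haar measure forward gives a conservative ergodic $A$-quasi-invariant measure $\nu$ on $G/A$. By Lemma \ref{l:rec}, for $\nu$-a.e.\ $x$ there is a sequence $a_i\to\infty$ in $A$ with $a_i x\to x$. The decisive step, which your proposal does not address, is the direct geometric observation that for the diagonal $A$ acting on $G/A\cong\partial^2\Hypsp^n_K$, such accumulation forces $x\in (G/A)^A\cong\mathcal{N}_G(A)/A$: every orbit other than the two $A$-fixed points escapes to infinity along any unbounded sequence in $A$. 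This replaces the Borel-density conclusion ``invariant measures live on fixed points'' by an elementary recurrence-to-fixed-points argument tailored to this rank-one situation, and then ergodicity immediately pins $\Phi$ down to one of the two elements of $\mathcal{N}_G(A)/A$. Your final orbit classification on $G/A\times G/A$ is fine as far as it goes, but you never establish the $G$-invariance needed to reach it.
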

In the statement above the source $G/A$ is considered as a measurable
$G$-space (resp. $\Gamma$-space),
and the target $G/A$ can be viewed either as a measurable or merely as
a Borel $G$-space (resp. $\Gamma$-space).
The ergodicity assumption in the theorem is equivalent to the ergodicity of $A\acts G/\Gamma$.
If $\Gamma<G$ is a lattice, ergodicity is guaranteed by Moore's theorem. Thus this
case is covered by Theorem~\ref{T:equiv-Borel-maps}.
On the other hand,
there exist discrete subgroups $\Gamma<G$ with infinite covolume with $\Gamma\acts G/A$ ergodic.
For instance,  if $\Gamma\triangleleft\Lambda$ where
$\Lambda<G$ is a cocompact lattice and
$\Lambda/\Gamma\simeq \mathbb{Z}$ or $\mathbb{Z}^2$ then $A\acts
G/\Gamma$ is ergodic (\cite{Rees}, \cite{Guiv}).
Examples of such infinite covolume $\Gamma<G$ can be constructed in
all groups $G={\rm SO}(n,1)$ (that is, $K=\R$) and ${\rm SU}(n,1)$
($K=\C$).
\begin{remark}\label{R:T-infcovol}
Theorem \ref{cor: non-lattices1} is also valid in case $G={\rm Sp}(n,1)$ (that is,
$K=\mathbb{H}$) or $G={\rm F}_{4\,(-20)}$ ($K$ the octonion numbers),
but gives no examples which are not covered by Theorem \ref{T:equiv-Borel-maps}.
This is due to the fact that in these cases, $G$ has property (T) and
ergodicity of $A\acts G/\Gamma$ implies $\vol(G/\Gamma)<+\infty$.
%
\end{remark}

\bigskip

Another possible modification of our basic setup concerns the assumptions on $H<G$,
namely instead of assuming $H$ to be an $S$-algebraic subgroup, one might consider
a general closed subgroup, or a discrete subgroup as an extreme case.
More specifically, consider the space of $\Gamma$-equivariant measurable maps
\[
	\Map_\Gamma(G/\Lambda,G/\Delta)
\]
where $\Lambda,\Delta<G$ are discrete subgroups and $\Gamma$ is a lattice in $G$.
As before, the source space $G/\Lambda$ is equipped with the Haar measure class
and is viewed as an ergodic $\Gamma$-space, while the target space $G/\Delta$ may be
viewed either as a measurable $\Gamma$-space, or as a Borel $\Gamma$-space.

\begin{thm}\label{T:bfgw2}
Let $G$ be a connected non-compact simple real Lie group, $\Gamma<G$ a lattice, $\Lambda, \Delta<G$ discrete subgroups,
with $\Lambda$ Zariski dense in $G$.
In addition assume one of the following, either:
\begin{itemize}
	\item[(RS)] 
	$\Lambda$ is a lattice in a subgroup of $G$ generated by unipotent elements, or
	\item[(BQ)]
	$\Delta$ is a contained in a lattice $\Delta_0<G$.
\end{itemize}
Then any element of $\Map_\Gamma(G/\Lambda, G/\Delta)$ is either:
\begin{enumerate}
	\item a $G$-map $g\Lambda\mapsto ga\Delta$, where $a\in G$ is such that $a^{-1}\Lambda a<\Delta$, or
	\item a constant map $g\Lambda\mapsto a\Delta$, where $a\in G$ is such that $a^{-1}\Gamma a <\Delta$.
\end{enumerate} 	
\end{thm}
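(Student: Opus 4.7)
The plan is to convert the classification of $\Gamma$-maps $\phi: G/\Lambda \to G/\Delta$ into a measure rigidity problem on a product of $G$-homogeneous spaces, and then apply the joinings classification of Benoist--Quint in case (BQ) or the measure classification of Ratner--Shah in case (RS). Concretely, $\phi$ determines a $\Gamma$-quasi-invariant graph measure $\nu_\phi$ on $G/\Lambda \times G/\Delta$ whose first projection lies in the Haar class. Applying the Frobenius-type reciprocity developed in \S\ref{s:frob} (realized via the twist $g\mapsto(\Gamma g,\, g^{-1}\phi(g\Lambda))$), this is equivalent data to a $\Lambda$-invariant probability measure $\eta_\phi$ on $\Gamma\backslash G\times G/\Delta$ whose first marginal is the Haar probability on the finite-volume space $\Gamma\backslash G$. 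The problem becomes the classification of all such $\Lambda$-invariant $\eta_\phi$.

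In case (BQ), I would project $\eta_\phi$ along $G/\Delta\to G/\Delta_0$ to the product $\bar Z=\Gamma\backslash G\times G/\Delta_0$ of two finite-volume $G$-homogeneous spaces. The Benoist--Quint joinings theorem applies: since $\Lambda$ is Zariski dense in the simple real Lie group $G$, every $\Lambda$-invariant ergodic probability measure on $\bar Z$ is either (i) supported on a closed $G$-orbit of the diagonal action (a homogeneous joining of the two Haar measures) or (ii) of the form \emph{Haar on $\Gamma\backslash G$} times a point mass at a $\Lambda$-fixed point in $G/\Delta_0$. In case (RS), I would instead use that $\Lambda$ is a lattice in $L$ generated by unipotents: induce to produce an $L$-invariant finite measure on $\Gamma\backslash G\times G/\Delta$ (or on an $L/\Lambda$-bundle over it) and invoke Ratner--Shah measure classification for subgroups generated by unipotents. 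Combined with Zariski density of $\Lambda$, hence of $L$, in the simple group $G$, the supporting closed subgroup of $G\times G$ must be either the diagonal copy of $G$ or a translate of $G\times\{e\}$, yielding the same dichotomy as in case (BQ).

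Unpacking the dichotomy: a diagonal $G$-orbit in $G/\Lambda\times G/\Delta$ has the form $\{(g\Lambda,ga\Delta):g\in G\}$ for some $a\in G$ with $a^{-1}\Lambda a\subset\Delta$, giving option (1) of the theorem; a product of Haar with $\delta_{a\Delta}$ at a $\Gamma$-fixed point $a\Delta\in G/\Delta$ forces $a^{-1}\Gamma a\subset\Delta$, giving the constant map of option (2). The main obstacle is case (RS): since $\Delta$ need not be a lattice, $G/\Delta$ may be infinite volume, so care is required both in passing from $\Lambda$-invariance to $L$-invariance (by producing an honest $L$-invariant extension of $\eta_\phi$, or by working on a suitable bundle) and in verifying the non-divergence needed to apply Ratner--Shah on the product. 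Case (BQ) is cleaner because projecting through $G/\Delta_0$ reduces the problem directly to the finite-volume setting where Benoist--Quint applies.
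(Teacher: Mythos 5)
Your overall strategy coincides with the paper's: pass from $\Map_\Gamma(G/\Lambda,G/\Delta)$ to $\Map_\Lambda(G/\Gamma,G/\Delta)$ via Frobenius reciprocity, push the Haar probability on $G/\Gamma$ to the graph measure on $G/\Gamma\times G/\Delta$, apply a measure classification theorem, and then unpack the homogeneous/atomic dichotomy. That much is right, and the observation that the two branches of the dichotomy translate into options (1) and (2) of the statement is correct in spirit. But there are two substantive gaps, plus a bookkeeping slip.

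In case (BQ), you project to $\bar Z=G/\Gamma\times G/\Delta_0$ and apply a Benoist--Quint joinings classification there, then assert that ``projecting through $G/\Delta_0$ reduces the problem directly to the finite-volume setting.'' It does not: the graph measure lives on $G/\Gamma\times G/\Delta$, which fibers over $\bar Z$ with (possibly infinite) fiber $\Delta_0/\Delta$, and knowing the pushed-forward measure on $\bar Z$ is homogeneous (or atomic) says nothing a priori about the measure upstairs. The hard step in the paper is precisely to show that if the image on $G/\Delta_0$ is Haar then $[\Delta_0:\Delta]<\infty$; this is done by a skew-product argument comparing the disintegration over $G/\Delta_0$ with an auxiliary Bernoulli system $\{0,1\}^{\Delta_0/\Delta}$, and invoking ergodicity (via Moore's theorem) of the induced $\Lambda$-action to derive a contradiction when $\Delta_0/\Delta$ is infinite. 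Only after this is BQ applied again, on the product $(G\times G)/(\Gamma\times\Delta)$. Your proposal omits this step entirely, which is where most of the work in the (BQ) branch lies.

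In case (RS), you worry about $G/\Delta$ being infinite volume and suggest inducing up to an $L$-invariant measure on a bundle. The concern is misdirected: Ratner-type measure classification classifies \emph{invariant probability measures} on quotients by \emph{discrete} (not necessarily cofinite) subgroups, so $G/\Gamma\times G/\Delta=(G\times G)/(\Gamma\times\Delta)$ is admissible as is. The paper does not induce; it applies the Ratner/Shah/Witte classification directly to the $\Lambda$-invariant probability measure $\mu$ on $X=G/\Gamma\times G/\Delta$, with Shah's and Witte's extensions covering, respectively, subgroups generated by unipotents and lattices in such subgroups. Your induction route might be made to work, but as written it is a sketch of a potential alternative rather than a resolved step. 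Finally, a small slip: you unpack the dichotomy on $G/\Lambda\times G/\Delta$, but the measure classification is happening on $G/\Gamma\times G/\Delta$ (the graph of $F$, not of $f$). Under Frobenius duality the roles flip: a diagonal $G$-orbit for $F$ produces the \emph{constant} map $f$, and a constant $F$ produces the \emph{$G$-equivariant} map $f$. Your final conclusions are still the correct ones, but the stated correspondence between measure types and conclusions (1)/(2) is inverted.
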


\ignore{
Here is another application in which the existence of a $\Gamma$-map
restricts the ambient group containing $\Gamma$. A subgroup $H$ of a
simple rank-one Lie group $G$ is called {\em horospherical} if $G/H$
for some
$\Ad$-split $x \in G$ we have
$$H =
\{g \in G: x^n g x^{-n} \to \mathrm{id} \};$$
in this case $G/H$ is identified with the space of horospheres in the
symmetric space corresponding to $G$.
\begin{thm}\label{thm: non-lattices2}
For $i=1,2$ let $G_i$ be a non-compact rank one simple Lie group with trivial
center, let $H_i$ be horospherical, and let $\Gamma_i$ be
discrete subgroups acting ergodically on $G_i/H_i$ which are
isomorphic as abstract groups via an isomorphism $\tau$. Suppose $c:
G_1/H_1 \to G_2/H_2$ is a measurable map satisfying $c(\gamma x) =
\tau(\gamma) c(x)$ for all $\gamma \in \Gamma_1$ and a.e. $x \in
G_1/H_1.$ Then
$\tau$ extends to a an isomorphism $\bar{\tau}: G_1 \to G_2$ such that
$\bar{\tau}(H_1) \subset H_2$, and (after changing $c$ on a conull subset)
for some $g_2 \in N_{G_2}(H_2)$ we have
$c(gH_1) =  \bar{\tau}(g)g_2H_2$ for all $g \in G_1$.
\end{thm}

This strengthens results of \cite{Furman}; note for instance that if
$\Lambda$ is a
cocompact lattice in $G_1$ and $\Gamma$ is a normal subgroup
of $\Lambda$ with $\Lambda/ \Gamma \cong \Z^d$, then the
hypotheses of Theorem \ref{thm: non-lattices2} are satisfied for any
$d$, but those of \cite[Thm. B]{Furman} are only satisfied if $d \leq
2.$
}
\medskip


\subsection{Organization of the paper}

In Section \ref{s:frob} we introduce the category of measurable $G$-spaces
and establish a version of Frobenius reciprocity that will play a crucial role
in the proof. Then in Section \ref{s:borel_density} we discuss the Borel density
theorem and in Section \ref{sec:proofs_of_the_basic_results}, give a self-contained proof of our main results under the assumption
that $H$ has no compact algebraic quotients. Sections \ref{s:frob}-\ref{sec:proofs_of_the_basic_results} exhibit our main ideas, and quickly prove a substantial part of our results, avoiding technicalities.

The proof in full generality
requires further preparation.
In Section \ref{s:mautner} we introduce
the notions of the Mautner envelope and the f.i.-algebraic kernel, and
establish a general version of the Mautner property (Theorem \ref{th:mau}),
which is of independent interest. In Section \ref{s:rel} we discuss fat complements
and deduce the relative Borel density theorem.
The proofs of the remaining results are given in Section \ref{s:proof}.
As we show they follow easily from the relative Borel density theorem.
In Section \ref{s:ex}, we give several examples to demonstrate that
the assumptions in the main theorems are essential. Finally, in an appendix to the paper, we summarize well-known results on the space of ergodic components and deduce some corollaries.


\medskip

\subsection{Acknowledgments}
We would like to thank the organizers of the meeting {\em Rigidity, dynamics
and groups actions} in July 2005 at Banff International
Research Station, during which this project was conceived. Most of
the research described in this paper was conducted at Banff in June
2006 under the `research in teams'
program. We would like to thank BIRS for a great
time and superb
working conditions.

This research was supported by ISF grants 584/04 and 704/08,
BSF grant 2008267, NSF grant DMS 0905977, EPSRC grant EP/H000091/1, and ERC grant 239606.

\section{Frobenius reciprocity}\label{s:frob} 
Let $G$ be a locally compact second countable group. Denote by
$\BorelCat_G$ the category of all Borel $G$-spaces. The objects of this category
are Borel actions on standard Borel spaces $G\acts X$, and morphisms
are Borel $G$-maps.

We shall denote by $\MeasCat_G$ the category of measurable $G$-spaces.
The objects of this category are measure class preserving actions of $G$ on standard
probability spaces $G\acts (X,[\mu])$, and the morphisms are $G$-morphisms $p:X
\to Y$ for which $[\nu]=[p_*\mu]$.
We identify any two $G$-morphisms 
that agree $\mu$-a.e. and write $\Mor_G(X,Y)$
for the set of equivalence classes.
We shall distinguish a subset of morphisms:
\[
	\Morpmp_G(X,Y)=\{ p\in \Mor_G(X,Y) : p\ \text{relatively\ p.m.p.}\}.
\]
Recall that given $X\in \MeasCat_G$ and $Z\in\BorelCat_G$ we denote by
\[
	\Map_G(X,Z)
\]
the set of equivalence classes of Borel $G$-equivariant maps
$\Phi:X\to Z$.
\medskip

Let $H$ be a closed subgroup of 
$G$.
Then every Borel $G$-space is a Borel $H$-space, and every
Borel $G$-map is a Borel $H$-map. Similarly for the measurable category.
Hence, we have the {\it restriction functor}
\[
  \Res{H}{G}:\BorelCat_G\to \BorelCat_H,\qquad\Res{H}{G}:\MeasCat_G\to \MeasCat_H.
\]
There is also a natural construction of the {\it induction functor} (cf. \cite[\S4.2]{Zimmer-book})
\[
	\Ind{H}{G}:\BorelCat_H\to \BorelCat_G,\qquad\Ind{H}{G}:\MeasCat_H\to \MeasCat_G
\]
defined as follows. Given a Borel $H$-space $X$, consider the factor map $\pi:G\times X\to (G\times X)/\!\!\sim$
where $\sim$ is the equivalence relation $(g,x)\sim (gh,h^{-1}x)$ for $g\in G$, $h\in H$, and $x\in X$.
As a measure space, $(G \times X)/\!\!\sim$ is isomorphic to $G/H \times X$ so is a standard Borel space. The $G$-action on $(G\times X)/\!\!\sim$
is given by $g[g_1,x]=[gg_1,x]$ for $g,g_1\in G$ and $x\in X$.
This defines the induced $G$-space $\Ind{H}{G}(X)$.
Given an $H$-map $p:X\to Y$ between Borel $H$-spaces, define the induced map
\[
	\Ind{H}{G}(p):\Ind{H}{G}(X)\to \Ind{H}{G}(Y)
\]
by $[g,x]\mapsto [g,p(x)]$ for $g\in G$ and $x\in X$.
In the measurable category one applies the same constructions to the underlying
Borel spaces. As for the measures, let $\lambda$ be a probability measure on $G$
equivalent to Haar measure on $G$.
We equip $(G\times X)/\!\!\sim$ with the probability measure obtained by pushing forward $\lambda \times \mu$ under the quotient map $G \times X \to (G \times X)/\!\!\sim$.
The $G$-action preserves the induced measure class on $\Ind{H}{G}(X)$. We leave the proof of the following straightforward statement to the reader.
\begin{lem}
	Let $X,Y \in \MeasCat_H$. Then
	\begin{enumerate}
		\item If $p\in \Mor_H(X,Y)$ then $\Ind{H}{G}(p)\in \Mor_G(\Ind{H}{G}(X),\Ind{H}{G}(Y))$,
		\item If $p\in \Morpmp_H(X,Y)$ then $\Ind{H}{G}(p)\in \Morpmp_G(\Ind{H}{G}(X),\Ind{H}{G}(Y))$.
	\end{enumerate}
\end{lem}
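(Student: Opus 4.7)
For part (1) my plan is to verify that the construction is functorial and compatible with measure classes. The map $\mathrm{id}_G \times p : G \times X \to G \times Y$ is Borel, equivariant for the left $G$-action on the first coordinate, and intertwines the diagonal right $H$-action, so it descends to a Borel $G$-map $\Ind{H}{G}(p)$ between the quotients $\pi_X : G \times X \to \Ind{H}{G}(X)$ and $\pi_Y : G \times Y \to \Ind{H}{G}(Y)$. Since $(\mathrm{id}_G \times p)_*(\lambda \times \mu) = \lambda \times p_*\mu$ and $[p_*\mu] = [\nu]$ by the morphism assumption, pushing forward by $\pi_Y$ shows that $\Ind{H}{G}(p)$ sends the induced measure class on $\Ind{H}{G}(X)$ to the induced measure class on $\Ind{H}{G}(Y)$.

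For part (2) the strategy is to upgrade an $H$-equivariant disintegration of $\mu$ to a $G$-equivariant disintegration of the induced measure. Choose $\mu' \in [\mu]$ as in the definition of relatively p.m.p., and let $y \mapsto \mu'_y$ be the resulting disintegration of $\mu'$ over $\nu' = p_*\mu'$, which by assumption satisfies $\mu'_{hy} = h_*\mu'_y$ for all $h \in H$ and $\nu'$-a.e. $y$. Put $\tilde{\mu} = \pi_{X*}(\lambda \times \mu')$ and $\tilde{\nu} = \pi_{Y*}(\lambda \times \nu')$, and propose as fiber measures
\[
\tilde{\mu}_{[g,y]} = (\iota_g)_*\mu'_y, \qquad \text{where } \iota_g : X \to \Ind{H}{G}(X), \; \iota_g(x) = [g,x].
\]
By construction this measure is supported on $\Ind{H}{G}(p)^{-1}([g,y])$.

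Three things then need to be verified. First, well-definedness on the quotient: from $[gh, h^{-1}y] = [g,y]$ the formula requires $(\iota_{gh})_*\mu'_{h^{-1}y} = (\iota_g)_*\mu'_y$, which holds because $\iota_{gh} \circ h^{-1} = \iota_g$ on $X$ and $\mu'_{h^{-1}y} = (h^{-1})_*\mu'_y$ by the $H$-equivariance of the disintegration. Second, the disintegration identity: I would unfold both $\int f\, d\tilde{\mu}$ and $\int \big( \int f\, d\tilde{\mu}_{[g,y]} \big)\, d\tilde{\nu}([g,y])$ to the common triple integral $\int_G \int_Y \int_X f([g,x])\, d\mu'_y(x)\, d\nu'(y)\, d\lambda(g)$ via Fubini and $\mu' = \int \mu'_y\, d\nu'(y)$. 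Third, $G$-equivariance $\tilde{\mu}_{g_0 \cdot [g,y]} = (g_0)_*\tilde{\mu}_{[g,y]}$, which is immediate from $\iota_{g_0 g} = g_0 \circ \iota_g$. The main obstacle, though not a deep one, is the first point: the $H$-equivariance $\mu'_{hy} = h_*\mu'_y$ holds only $\nu'$-a.e. in $y$ for each fixed $h$, so one has to fix a single conull subset $Y_0 \subseteq Y$ on which the identity holds for all $h \in H$ simultaneously, and then check that the Fubini argument respects this null set. This is handled by passing to a strict representative of the disintegration map in the sense of the remark following \eqref{eq:equivariance}.
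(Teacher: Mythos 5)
The paper leaves this lemma to the reader with no written proof, so there is no argument to compare against; your job was to supply the "straightforward" verification, and you have done so correctly. Part~(1) is exactly the natural functoriality check, and the measure-class claim follows as you say because pushforward under a fixed Borel map preserves equivalence of measures.

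Part~(2) is also essentially right, and the key formula $\tilde{\mu}_{[g,y]}=(\iota_g)_*\mu'_y$ is the correct one: you verify that it descends to the quotient, that it is supported on the fiber of $\Ind{H}{G}(p)$ over $[g,y]$, that it disintegrates $\tilde\mu$ over $\tilde\nu$, and that it is $G$-equivariant. Two small points you leave implicit but should at least acknowledge: (a) one needs $\tilde\nu=\Ind{H}{G}(p)_*\tilde\mu$, which is immediate from $\Ind{H}{G}(p)\circ\pi_X=\pi_Y\circ(\mathrm{id}_G\times p)$ together with $(\mathrm{id}_G\times p)_*(\lambda\times\mu')=\lambda\times\nu'$; and (b) one needs Borel measurability of $[g,y]\mapsto\tilde\mu_{[g,y]}$, which holds because $(g,y)\mapsto(\iota_g)_*\mu'_y$ is Borel on $G\times Y$ and descends through the Borel quotient $\pi_Y$ by your well-definedness argument. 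With those two remarks, the characterizing properties of disintegration are all checked and uniqueness gives the result. Your identification of the a.e.-vs-everywhere issue and its resolution via the strict representative (using \cite[B.5]{Zimmer-book}) is exactly the right technical care; note the same device is also implicitly needed in part~(1) to have $\mathrm{id}_G\times p$ actually intertwine the $\sim$-relation pointwise.
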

\qed

Next we show that the functors $\Res{H}{G}$ and $\Ind{H}{G}$ are formally adjoint.
\begin{prop}[Frobenius reciprocity] \label{p:frob}\hfill{}\\
Let $X\in  \MeasCat_H$, $Y\in  \MeasCat_G$ and $Z\in\BorelCat_G$.
Then there are natural bijections
\begin{eqnarray*}
	\Mor_H(X,\Res{H}{G}(Y))&\simeq& \Mor_G(\Ind{H}{G}(X),Y),\\
	\Morpmp_H(X,\Res{H}{G}(Y))&\simeq& \Morpmp_G(\Ind{H}{G}(X),Y),\\
	\Map_H(X,\Res{H}{G}(Z))&\simeq& \Map_G(\Ind{H}{G}(X),Z),
\end{eqnarray*}
in all three cases given by
\begin{equation}\label{eq:phi}
	\Phi \longmapsto ([g,x]\mapsto g\Phi(x)).
\end{equation}
\end{prop}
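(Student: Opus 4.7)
I would construct the bijection explicitly in the form dictated by the formula \eqref{eq:phi} and verify that it restricts correctly to each of the three subclasses.

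\emph{Forward direction.} Given an $H$-equivariant $\Phi$ (in any of the three source categories), set
\[
\tilde\Phi\colon \Ind{H}{G}(X)\longrightarrow Y\ (\text{resp. } Z),\qquad \tilde\Phi([g,x])=g\Phi(x).
\]
The $H$-equivariance of $\Phi$ makes this compatible with $(g,x)\sim(gh,h^{-1}x)$, and $\tilde\Phi$ factors through the manifestly Borel map $(g,x)\mapsto g\Phi(x)$ on $G\times X$, hence is Borel and descends to a class under the a.e.\ identification. The identity $\tilde\Phi(g'[g,x])=g'g\Phi(x)=g'\tilde\Phi([g,x])$ gives $G$-equivariance. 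For the measure classes I would compute
\[
\tilde\Phi_{*}(\lambda\times\mu)\ =\ \int_{G} g_{*}\Phi_{*}\mu \,d\lambda(g);
\]
since $\Phi_{*}\mu\sim\mu_{Y}$ and $\mu_{Y}$ is $G$-quasi-invariant, this class equals $[\mu_{Y}]$, so $\tilde\Phi\in\Mor_{G}$. For $\Morpmp$, the disintegration of $\lambda\times\mu$ along the fibres of $\tilde\Phi$ is obtained by fibrewise disintegration of $\mu$ along $\Phi$ and Fubini; the $G$-equivariance of the induced disintegration follows from the $H$-equivariance of the original together with the identity $g'[g,x]=[g'g,x]$.

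\emph{Inverse direction.} Fix a Borel section $\sigma\colon G/H\to G$ of the quotient; then $(gH,x)\mapsto[\sigma(gH),x]$ is a Borel isomorphism $G/H\times X\cong \Ind{H}{G}(X)$ carrying $m_{G/H}\times\mu$ to a representative of the induced measure class. A $G$-equivariant $\Psi\colon \Ind{H}{G}(X)\to Y$ pulls back to a Borel $F\colon G/H\times X\to Y$, and the $G$-equivariance becomes a cocycle identity
\[
F\bigl(g'\cdot gH,x\bigr)\ =\ g'\cdot F\bigl(gH,\alpha(g',gH)^{-1}\cdot x\bigr),\qquad \alpha(g',gH)=\sigma(g'gH)^{-1}g'\sigma(gH)\in H,
\]
holding for every $g'\in G$ and a.e.\ $(gH,x)$ (by \cite[Appendix B]{Zimmer-book} we may pick a representative $\Psi$ for which the $G$-equivariance holds on a conull $G$-invariant Borel set, which yields the above pointwise on a conull set). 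Now I set
\[
\Phi(x)\ \df\ F(eH,x),
\]
which is defined on a conull subset of $X$: Fubini applied to the cocycle relation with $gH=eH$ shows that for a.e.\ $x$, the value $F(eH,x)$ is determined and that $\Phi(hx)=h\Phi(x)$ for every $h\in H$ (specialize to $g'=h$, which acts trivially on $eH$ modulo the cocycle by $\sigma$).

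\emph{Inversion and preservation of subclasses.} Substituting $\tilde\Phi$ into the inverse construction recovers $\Phi$ on the fibre over $eH$, and conversely the formula $[g,x]\mapsto g\Phi(x)$ reconstructs $\Psi$ from its restriction using the $G$-equivariance that was valid on a conull set. For $\Mor$, the equivalence $\Phi_{*}\mu\sim\mu_{Y}\Longleftrightarrow \tilde\Phi_{*}(\lambda\times\mu)\sim\mu_{Y}$ was established above. For $\Morpmp$, the disintegration of $\mu$ over $\Phi_{*}\mu$ and the disintegration of the induced measure over $\mu_{Y}$ correspond to each other under restriction to the $eH$-fibre; the $G$-equivariance of one is equivalent to the $H$-equivariance of the other.

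\emph{Main obstacle.} The delicate point is the inverse in the measurable category: the prescription $\Phi(x)=\Psi([e,x])$ is evaluated on the null set $\{[e,x]:x\in X\}\subset \Ind{H}{G}(X)$. Resolving this requires choosing a genuinely $G$-equivariant (not just a.e.) representative on a conull $G$-invariant Borel set, together with a Fubini argument in the section-trivialization $G/H\times X$ to ensure the slice at $eH$ is well-defined a.e.\ and depends measurably on $x$. This is the only step where one must work harder than in the purely Borel setting.
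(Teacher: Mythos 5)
Your proof is correct and follows essentially the same route as the paper: both directions invoke Zimmer's Appendix B to upgrade a.e.\ equivariance to strict equivariance on a suitable conull invariant Borel set, and the content of the inverse direction is to make sense of evaluating along the null slice over $eH$. The one technical variation is that you trivialize $\Ind{H}{G}(X)$ as $G/H\times X$ via a section $\sigma$, whereas the paper lifts $\Psi$ to $G\times X$ and applies Zimmer B.5 to the $G\times H$-action $(g,h)\cdot(g',x)=(gg'h^{-1},hx)$; with the paper's choice the resulting conull $(G\times H)$-invariant set is automatically of product form $G\times X_0$ with $X_0\subset X$ $H$-invariant and conull, which makes the slice at $g'=e$ immediately well-defined, while your trivialization requires the extra (but correct) Fubini argument you flag at the end, exploiting that $\alpha(\sigma(yH)^{-1},yH)=e$ when $\sigma(eH)=e$. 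Either way the argument goes through; the paper's lift just packages the null-set bookkeeping slightly more cleanly.
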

\begin{proof}
Let $\Phi: X\to Y$ be an $H$-map, where $Y$ is a $G$-space. It follows
from \cite[B.5]{Zimmer-book} that after modifying
$\Phi$ on a null set, there exists an $H$-invariant Borel subset $X_0\subset X$
of full measure such that $\Phi|_{X_0}$ is $H$-equivariant.
This shows that formula (\ref{eq:phi}) gives a well-defined map
from $\Mor_H(X,\Res{H}{G}(Y))$ to $\Mor_G(\Ind{H}{G}(X),Y)$.

Now we construct its inverse. Every element in $\Mor_G(\Ind{H}{G}(X),Y)$
can be lifted to a map $\Psi: G\times X \to Y$ such that
\begin{equation}\label{eq:equiv}
	\Psi(gg',x)=g\Psi(g',x)\quad\text{and}\quad \Psi(g'h^{-1},h\cdot x)=\Psi(g',x)
\end{equation}
for every $g\in G$, $h\in H$, and almost every $(g',x)\in G\times X$.
Moreover, this correspondence respects the measure-class-preserving property.
Applying \cite[B.5]{Zimmer-book} to the action of $G\times H$ on $G\times X$ given by
\[
	(g,h).(g',x)=(gg'h^{-1}, h\cdot x),
\]
we conclude that after modifying $\Psi$ on a set of measure zero, we may assume
that there exists an $H$-invariant Borel subset $X_0$ of $X$ with full measure
such that (\ref{eq:equiv}) holds for all $(g',x)\in G\times X_0$.
Then  $\Phi(x)=\Psi(e,x)$, $x\in X$, is an $H$-map and it is clear that it defines the inverse
to the map (\ref{eq:phi}).
\end{proof}

The induced space $\Ind{H}{G}(X)$ is easy to describe for $G$-spaces.

\begin{prop}\label{p:g_space}
For every $X\in \MeasCat_G$,
\[
	\Ind{H}{G}(\Res{H}{G}(X))\simeq G/H\times X
\]
where the latter space is equipped with the product measure class and the  diagonal action.
\end{prop}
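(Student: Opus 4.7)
The plan is to exhibit an explicit Borel bijection between $\Ind{H}{G}(\Res{H}{G}(X))$ and $G/H \times X$, check it is $G$-equivariant for the two prescribed actions, and verify that it identifies the measure classes. The candidate map is induced by
\[
\Psi: G\times X\longrightarrow G/H\times X,\qquad (g,x)\longmapsto (gH,\ gx).
\]
This is Borel, and constant on equivalence classes since
$\Psi(gh,h^{-1}x)=(ghH,\, gh\cdot h^{-1}x)=(gH,\, gx)$. Hence $\Psi$ descends to a Borel map $\bar\Psi:\Ind{H}{G}(\Res{H}{G}(X))\to G/H\times X$, $[g,x]\mapsto (gH,gx)$.

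Next I would verify that $\bar\Psi$ is a Borel isomorphism by exhibiting the inverse
\[
\Theta:G/H\times X\longrightarrow \Ind{H}{G}(\Res{H}{G}(X)),\qquad (gH,y)\longmapsto [g,\,g^{-1}y],
\]
which is well defined because $[gh,(gh)^{-1}y]=[gh,h^{-1}g^{-1}y]\sim[g,g^{-1}y]$, and which is Borel by standard selection of a Borel section $G/H\to G$. The compositions are routinely seen to be the identity. $G$-equivariance of $\bar\Psi$ is immediate: the induced action sends $[g,x]\mapsto[g_0g,x]$, and
\[
\bar\Psi\bigl(g_0\cdot[g,x]\bigr)=(g_0gH,\,g_0gx)=g_0\cdot(gH,gx)=g_0\cdot\bar\Psi([g,x]),
\]
which is exactly the diagonal action on $G/H\times X$.

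The only non-formal point is the matching of measure classes, and this is where I expect the main (though mild) obstacle to lie. Factor $\Psi$ as
\[
G\times X\overto{\alpha}G\times X\overto{p\times\Id}G/H\times X,\qquad \alpha(g,x)=(g,gx),\quad p:G\to G/H,
\]
where $\alpha$ is a Borel automorphism with Borel inverse $(g,y)\mapsto(g,g^{-1}y)$. Since $\mu$ is $G$-quasi-invariant, Fubini's theorem gives for any Borel $A\subset G$ and $B\subset X$
\[
(\alpha_*(\lambda\times\mu))(A\times B)=\int_A \mu(g^{-1}B)\,d\lambda(g),
\]
which vanishes iff $\lambda(A)=0$ or $\mu(B)=0$; hence $\alpha_*(\lambda\times\mu)$ is equivalent to $\lambda\times\mu$. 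Pushing forward by $p\times\Id$ yields a measure in the class of $p_*\lambda\times\mu$, and $p_*\lambda$ lies in the Haar measure class on $G/H$. Therefore $\bar\Psi$ sends the induced measure class $\pi_*[\lambda\times\mu]$ on $\Ind{H}{G}(\Res{H}{G}(X))$ to the product measure class on $G/H\times X$, completing the identification in $\MeasCat_G$.
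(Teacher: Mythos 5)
Your proof uses exactly the same map as the paper's, which consists of the single observation that the Borel automorphism $(g,x)\mapsto(g,g\cdot x)$ of $G\times X$ induces the required isomorphism of quotients. You simply fill in the routine verifications (descent, inverse, equivariance, and the Fubini/quasi-invariance argument for the measure classes) that the paper leaves implicit.
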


\begin{proof}
The map $G\times X\to G\times X: (g,x)\mapsto (g,g\cdot x)$ induces an isomorphism of the
spaces in question.
\end{proof}

Now we establish several corollaries that give correspondences among sets of
maps in different categories.

\begin{cor}\label{p:main}
Let $X,Y\in\MeasCat_G$, $Z\in\BorelCat_G$, and $H<G$ a closed subgroup.
Then under the bijections given by Propositions \ref{p:frob} and \ref{p:g_space},
\begin{align}\label{eq:bij}
	\Mor_H(X,Y) &\simeq \Mor_G(G/H\times X,Y),\nonumber\\
	\Morpmp_H(X,Y) &\simeq \Morpmp_G(G/H\times X,Y),\\
	\Map_H(X,Z) &\simeq \Map_G(G/H\times X,Z).\nonumber
\end{align}
\end{cor}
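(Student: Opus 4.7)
The plan is that this corollary is essentially a formal consequence of the two preceding propositions, obtained by composition. So the proof should be short and mostly bookkeeping.

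First I would begin with the observation that the objects $X, Y \in \MeasCat_G$ in the statement, when they appear in the left-hand sides of \eqref{eq:bij}, are being viewed as $H$-spaces via restriction. That is, $\Mor_H(X,Y)$ really means $\Mor_H(\Res{H}{G}(X), \Res{H}{G}(Y))$, and similarly for the other two sets. With this reading, Proposition \ref{p:frob}, applied to the $H$-space $\Res{H}{G}(X)$ and the $G$-space $Y$ (or $Z$ in the last case), yields the three bijections
\[
\Mor_H(\Res{H}{G}(X), \Res{H}{G}(Y)) \simeq \Mor_G(\Ind{H}{G}(\Res{H}{G}(X)), Y),
\]
and the analogous statements with $\Mor$ replaced by $\Morpmp$ and $\Map$.

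Next I would invoke Proposition \ref{p:g_space} to identify $\Ind{H}{G}(\Res{H}{G}(X)) \simeq G/H \times X$ as a measurable $G$-space, where the right-hand side carries the product measure class and the diagonal $G$-action. Substituting this identification into the three bijections above gives precisely the isomorphisms \eqref{eq:bij}. The explicit formula for the correspondence follows by tracing: under the isomorphism $G \times X \to G \times X$, $(g,x) \mapsto (g, g\cdot x)$, a class $[g,x] \in \Ind{H}{G}(\Res{H}{G}(X))$ corresponds to $(gH, g\cdot x) \in G/H \times X$, so the Frobenius formula $\Phi \mapsto ([g,x] \mapsto g\Phi(x))$ becomes $(gH, y) \mapsto g\Phi(g^{-1}y)$ on $G/H \times X$.

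The only things left to verify are that the three flavors of morphism — ordinary morphisms, relatively p.m.p.\ morphisms, and Borel $G$-maps to a Borel $G$-space $Z$ — behave correctly under the composition. But this is immediate: the bijections in Proposition \ref{p:frob} already respect these three categories, and the isomorphism in Proposition \ref{p:g_space} is an isomorphism in $\MeasCat_G$, hence preserves both the measure class and (trivially) the relatively p.m.p.\ property of morphisms, as well as the Borel structure needed for the $\Map$ statement. I do not anticipate any real obstacle here; the whole argument is a one-line composition once the notational identifications are made, and there is nothing substantive to prove beyond what is already contained in Propositions \ref{p:frob} and \ref{p:g_space}.
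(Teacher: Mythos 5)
Your proof is correct and takes the same route as the paper, which simply cites Propositions \ref{p:frob} and \ref{p:g_space}; you have just spelled out the composition and the resulting explicit formula in more detail than the paper does.
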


\begin{proof}
The claim follows from Propositions \ref{p:frob} and \ref{p:g_space}.
\end{proof}

\medskip

\begin{cor} \label{p:duality}
Let $H,L<G$ be closed subgroups, $Y\in\MeasCat_G$, $Z\in\BorelCat_G$.
Then there are natural bijections
\begin{align}\label{eq:bij}
	\Mor_L(G/H,Y) &\simeq \Mor_H(G/L,Y),\nonumber\\
	\Morpmp_L(G/H,Y) &\simeq \Morpmp_H(G/L,Y),\\
	\Map_L(G/H,Z) &\simeq \Map_H(G/L,Z).\nonumber
\end{align}
\end{cor}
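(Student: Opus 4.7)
The plan is to deduce all three duality statements directly from Corollary~\ref{p:main} by applying it twice. The key observation is that the product $G/H \times G/L$, equipped with the product measure class and the diagonal $G$-action, is symmetric in the roles of $H$ and $L$, so that any formula expressing a space of maps in terms of $G$-maps out of $G/H \times G/L$ will exhibit this symmetry automatically.

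Concretely, for the first bijection, I would take $X = G/H$ in Corollary~\ref{p:main} (with the closed subgroup there chosen to be $L$), obtaining
\[
\Mor_L(G/H, Y) \;\simeq\; \Mor_G(G/L \times G/H, Y).
\]
Symmetrically, taking $X = G/L$ in Corollary~\ref{p:main} with the closed subgroup chosen to be $H$, one gets
\[
\Mor_H(G/L, Y) \;\simeq\; \Mor_G(G/H \times G/L, Y).
\]
The coordinate flip $(gH, gL) \leftrightarrow (gL, gH)$ defines a $G$-equivariant isomorphism of measurable $G$-spaces $G/H \times G/L \simeq G/L \times G/H$ (diagonal actions, product measure classes), so the right-hand sides are canonically in bijection. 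Composing gives the desired bijection $\Mor_L(G/H,Y) \simeq \Mor_H(G/L,Y)$.

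The same argument, applied to the relatively p.m.p. morphisms and to the set $\Map$ of Borel equivariant maps respectively, yields the second and third bijections, since Corollary~\ref{p:main} provides a bijection in each of these three categories simultaneously, and the flip isomorphism preserves both the p.m.p. property (it is a measure-class isomorphism) and Borel equivariance. I expect no real obstacle here: the work has been done in Propositions~\ref{p:frob} and~\ref{p:g_space}, and it only remains to unwind the explicit bijections to verify naturality. If desired, one can also make the bijection explicit by tracing through~\eqref{eq:phi}: a map $\Phi \in \Mor_L(G/H,Y)$ corresponds to the $G$-map $[g, g'H] \mapsto g\,\Phi(g'H)$ on $G/L \times G/H$, which after the flip is reinterpreted as a $G$-map on $G/H \times G/L$, and finally restricts to an $H$-map $G/L \to Y$ on the fiber over $eH$.
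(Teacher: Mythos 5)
Your proposal is correct and is essentially the paper's own proof: the paper writes $\Map_L(G/H,Z)\simeq \Map_G(G/L\times G/H,Z)\simeq \Map_H(G/L,Z)$ using Corollary~\ref{p:main} twice, with the coordinate flip left implicit. You have simply spelled out the flip isomorphism and the explicit formula, which is a fine elaboration but not a different route.
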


\begin{proof}
Indeed $\Map_L(G/H,Z)\simeq \Map_G(G/L\times G/H,Z)\simeq \Map_H(G/L,Z)$
using Corollary~\ref{p:main}. Similarly for the other maps.
\end{proof}
\begin{remark}\label{R:duality}
	Let us record the correspondence $\Map_L(G/H,Z)\simeq \Map_H(G/L,Z)$,
	explicitly.
	Denote $Y=G/L$ and $\sigma:Y\to G$ a Borel cross-section of the projection $g\mapsto gL$.
	Then the map $\ell:G\times Y\to G$ defined by
	\[
		\ell(g,y)=\sigma(g.y)^{-1}g\sigma(y)
	\]
	takes values in $L$, and forms a Borel cocycle.
	Given an $L$-map $f:G/H\to Z$ we define $F:G/L\to Z$ by
	\[
		F(y)=\sigma(y).f(\sigma(y)^{-1}H)
	\]
	and observe that for $h\in H$ one has using $L$-equivariance of $f$:
	\begin{eqnarray*}
		F(h.y)&=& \sigma(h.y).f(\sigma(h.y)^{-1}H)\\
		&=& \sigma(h.y).f(\ell(h,y)\sigma(y)^{-1}H)=\sigma(h.y)\ell(h,y).f(\sigma(y)^{-1}H)\\
		&=& \sigma(h.y)\ell(h,y)\sigma(y)^{-1}.F(y)=h.F(y).
	\end{eqnarray*}
	The correspondence $f\mapsto F$ is an explicit identification of $\Map_L(G/H,Z)$
	with $\Map_H(G/L,Z)$, depending on $\sigma:G/L\to G$.
\end{remark}
\medskip

For our purposes it will suffice to consider maps from measurable spaces
to Borel spaces. Hence we state the following Corollary for this case only,
others being analogous.

\begin{cor} \label{c:frob}
Let $H,L, M$ be closed subgroups of $G$ with $L<M$, and let $Z\in\BorelCat_G$.
Then the natural inclusion map
\[
	\Map_M(G/H,Z) \hookrightarrow \Map_L(G/H,Z)
\]
is a bijection if and only if the natural map
\[
	\Map_H(G/M,Z) \to \Map_H(G/L,Z),
\]
induced by $G/L\to G/M$, is a bijection.
%
\end{cor}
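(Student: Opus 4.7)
The plan is to reduce the statement to the observation that both the natural inclusion and the induced map are, via Corollary~\ref{p:duality}, two incarnations of a single $G$-map between $G$-map spaces. Specifically, the inclusion $L<M$ gives a $G$-equivariant quotient $q:G/L\to G/M$, and hence a $G$-map
\[
	q\times \mathrm{id}_{G/H}:\ G/L\times G/H\ \longrightarrow\ G/M\times G/H.
\]
Precomposition with this map defines a natural map
\[
	q^{*}:\ \Map_G(G/M\times G/H,Z)\ \longrightarrow\ \Map_G(G/L\times G/H,Z),
\]
and the claim is that, under the identifications $\Map_L(G/H,Z)\simeq \Map_G(G/L\times G/H,Z)\simeq \Map_H(G/L,Z)$ (and similarly for $M$), the map $q^{*}$ is identified simultaneously with the natural inclusion $\Map_M(G/H,Z)\hookrightarrow \Map_L(G/H,Z)$ and with the pullback $\Map_H(G/M,Z)\to \Map_H(G/L,Z)$. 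Once this is done, the corollary is immediate, since a map is a bijection iff each of its avatars is.

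The first identification is the easier one: given a $G$-map $\Phi:G/M\times G/H\to Z$, the corresponding $M$-map is $x\mapsto \Phi([e_{G/M}],x)$ by Proposition~\ref{p:frob} (formula~(\ref{eq:phi})); composing $\Phi$ with $q\times\mathrm{id}_{G/H}$ leaves this map unchanged as a function $G/H\to Z$, but now regarded only as an $L$-map, so $q^{*}$ becomes the natural forgetful inclusion. The second identification uses the symmetric version of (\ref{eq:phi}) (which is the content of Remark~\ref{R:duality}): $\Phi$ corresponds to the $H$-map $y\mapsto \Phi(y,[e_{G/H}])$ on $G/M$, and precomposing with $q\times \mathrm{id}_{G/H}$ has the effect of pulling back this $H$-map along $q:G/L\to G/M$. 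The commutativity of the two resulting squares is a direct computation with the explicit formulas.

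The main obstacle, such as it is, is purely bookkeeping: since maps are defined up to null sets, one must choose strictly equivariant representatives when tracing the formulas, which is legitimate by \cite[Appendix B]{Zimmer-book} (already invoked in the proof of Proposition~\ref{p:frob}). No new ideas beyond careful unraveling of the two Frobenius identifications are needed, and the conclusion follows by combining the two commuting squares: $q^{*}$ is a bijection if and only if its left-hand avatar $\Map_M(G/H,Z)\hookrightarrow \Map_L(G/H,Z)$ is, if and only if its right-hand avatar $\Map_H(G/M,Z)\to \Map_H(G/L,Z)$ is.
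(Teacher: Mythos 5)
Your proof is correct and takes essentially the same route as the paper's: the paper simply asserts that the square with vertical duality bijections commutes, and your argument makes that "straightforward check" explicit by recognizing both horizontal arrows as avatars of the single precomposition map $q^*$ on $\Map_G(G/{\cdot}\times G/H,Z)$, which is exactly the intermediate space through which the Corollary~\ref{p:duality} bijections are defined. The bookkeeping with strictly equivariant representatives is handled as you note, and both identifications compute out as you claim.
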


\begin{proof}
It is straightforward to check that one has the commutative diagram
\[\xymatrix{
	\Map_M(G/H,Z) \ar[d]^{\simeq} \ar[r] & \Map_L(G/H,Z) \ar[d]^{\simeq}\\
	\Map_H(G/M,Z) \ar[r] & \Map_H(G/L,Z)
}\]
where the vertical arrows are the bijections given by Corollary \ref{p:duality}.
The claim follows.
\end{proof}

\section{Borel density theorem}\label{s:borel_density} 

Fix a finite set $S$ and a collection of non-discrete local fields
$k_v$, $v\in S$, of zero-characteristic.
Let $V=\prod_{v\in S} \mathbf{V}_v(k_v)$ be an $S$-variety. It carries the
\emph{Zariski topology}, by which we mean the product of the Zariski
topologies of the factors $\mathbf{V}_v$, $v\in S$, and the
\emph{analytic topology} coming from the analytic
structures of the local fields.
An $S$-algebraic quotient $H\to K$ of an $S$-algebraic group $H$ is called {\it compact} if $K$ is compact
in the analytic topology.

For an $S$-algebraic variety $V$, we set
\[
	\Prob^0(V)=V,\qquad \Prob^{n+1}(V)=\Prob(\Prob^n(V))\qquad (n\ge 0).
\]
We shall need the following version of the Borel density theorem.

\begin{thm}[Borel Density] \label{th:BDT}
Let $H$ be an $S$-algebraic group with no compact $S$-algebraic factors,
and let $V$ be an $S$-algebraic $H$-space.
Then for all $n \geq 0$,
\[
	\Prob^n(V)^{H}=\Prob^n\left(V^{H}\right).
\]
\end{thm}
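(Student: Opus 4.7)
The plan is to proceed by induction on $n$, with base case $n=0$ tautological. In all cases, the inclusion $\Prob^n(V^H)\subset\Prob^n(V)^H$ is immediate since the $H$-action on $V^H$ is trivial; the content is the reverse inclusion.

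For the base case $n=1$, i.e., $\Prob(V)^H\subset\Prob(V^H)$, I would argue as follows. By a version of Chevalley's theorem valid over local fields, each $H$-orbit in $V$ is locally closed in the analytic topology, and for each $v\in V$ the orbit map factors as an $H$-equivariant embedding of $H/H_v$ into a projective space $\mathbb{P}(W)$ associated to a rational $H$-representation $W$. The space $V$ decomposes (modulo $\mu$-null sets) as a countable union of such orbits. Restricting an invariant $\mu\in\Prob(V)^H$ to a single orbit and pushing forward to $\mathbb{P}(W)$ yields an $H$-invariant probability measure on $\mathbb{P}(W)$, which the classical Furstenberg--Margulis lemma in the form valid for $S$-algebraic groups without compact factors (due to Dani, Furstenberg, Margulis, Shalom, Wang, and others) forces to be supported on $\mathbb{P}(W)^H$. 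Pulling back through the Chevalley embedding, this shows $\mu$-a.e.\ $v$ has $H_v=H$, i.e., $\mu$ is supported on $V^H$.

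For the inductive step $n\geq 2$, the key tool is the $H$-equivariant \emph{barycenter map}
\[
\pi\colon \Prob^n(V)\longrightarrow \Prob^{n-1}(V), \qquad \pi(\mu)(E)=\int_{\Prob^{n-1}(V)}\nu(E)\,d\mu(\nu),
\]
defined on Borel sets $E\subset \Prob^{n-2}(V)$. Given $\mu\in\Prob^n(V)^H$, equivariance yields $\pi(\mu)\in\Prob^{n-1}(V)^H$, which by the inductive hypothesis coincides with $\Prob^{n-1}(V^H)$. Thus $\pi(\mu)$ is concentrated on $\Prob^{n-2}(V^H)\subset\Prob^{n-2}(V)$, and a Fubini argument yields $\nu\bigl(\Prob^{n-2}(V)\setminus \Prob^{n-2}(V^H)\bigr)=0$ for $\mu$-a.e.\ $\nu$. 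Equivalently, $\mu$-a.e.\ $\nu$ lies in $\Prob^{n-1}(V^H)$, so $\mu\in\Prob(\Prob^{n-1}(V^H))=\Prob^n(V^H)$, closing the induction.

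The main obstacle is the base case: the reduction from a general $S$-algebraic $H$-action on $V$ to a linear/projective one via Chevalley, together with the invocation of the correct $S$-algebraic generalization of the Furstenberg--Margulis lemma. Once this is in hand, the iteration through $\Prob^n$ is essentially formal, driven entirely by the $H$-equivariance of the barycenter map and Fubini's theorem.
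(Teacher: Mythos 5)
Your inductive step is essentially the same as the paper's: the paper also iterates via the barycenter map $\Prob(\Prob^n(V))\to\Prob^n(V)$, using its $H$-equivariance plus the induction hypothesis and a Fubini-type argument (the paper phrases it as a proof by contradiction, you do it directly; the content is identical). The difference, and the real content of the theorem, is in the base case $n=1$.

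For the base case the paper does less work than you do: it cites Shalom's Theorem~3.9 directly for an arbitrary algebraic $H$-variety over a single local field (no Chevalley reduction to projective space is performed), and then extends to the $S$-algebraic product $V=\prod_v\mathbf{V}_v(k_v)$ by the elementary observation that the pushforwards $p_{v*}\mu\in\Prob(\mathbf{V}_v(k_v))$ under each coordinate projection are $\mathbf{H}_v(k_v)$-invariant, hence (by the one-field case) supported on $\mathbf{V}_v(k_v)^{\mathbf{H}_v(k_v)}$, which forces $\mu$ to be supported on the product $V^H$. Your route instead re-derives the one-field case by linearizing orbits via Chevalley and invoking the Furstenberg--Margulis projective-measure lemma; this is the classical scheme of proof (it is roughly what goes on inside the citation), so it is not wrong in principle, but it is longer and it duplicates literature the paper deliberately outsources. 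Note also that you leave the passage from the one-field statement to the $S$-algebraic one implicit ("in the form valid for $S$-algebraic groups") rather than giving the short projection argument.

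There is one genuine imprecision in your base case worth flagging: the assertion that ``$V$ decomposes (modulo $\mu$-null sets) as a countable union of $H$-orbits'' is false in general. An algebraic $H$-action on $V$ can have an uncountable family of orbits and an $H$-invariant measure can spread across a continuum of them (e.g.\ $\mathbf{G}_m$ acting on $\mathbb{A}^2$ by $t\cdot(x,y)=(tx,t^{-1}y)$ has orbits indexed by the value of $xy$). What you actually want is a disintegration of $\mu$ over the orbit partition. This partition is smooth precisely because Chevalley/Rosenlicht guarantee that orbits are locally closed, so the orbit space is standard Borel; one then applies the Furstenberg--Margulis lemma to the $H$-invariant conditional measure on $\mu$-a.e.\ orbit (after embedding that orbit into a suitable $\mathbb{P}(W)$, noting that the representation $W$ depends on the stabilizer of the orbit). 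With ``countable union'' replaced by ``disintegration over the smooth orbit partition,'' your base case becomes correct. It remains a heavier route than the paper's: citing Shalom's theorem for algebraic varieties directly avoids the orbit disintegration and the orbit-dependent choice of Chevalley embedding altogether.
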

Note that existence of $H$-invariant measures implies existence of $H$-fixed points.
The set of $H$-fixed points $V^H$ is the product of $k_v$-points of fixed point varieties
\[
	V^H=\prod_{v\in S} \mathbf{V}_v^{\mathbf{H}_v}(k_v).
\]
\begin{proof}
The inclusion $\Prob^n(V)^{H}\supset\Prob^n\left(V^{H}\right)$ is trivial. We shall prove the other
inclusion by induction on $n$, starting with $n=1$.
When $V=\mathbf{V}_v(k_v)$ is an algebraic variety defined over a single local field $k_v$,
the claim is well-known (see, for instance, \cite[Theorem~3.9]{Shalom-density}).
To handle the general $S$-algebraic case, consider the projections
$p_v: V\to \mathbf{V}_v(k_v)$, $v\in S$.
For $\mu\in \Prob(V)^{H}$, the push-forward measures satisfy
\[
	p_{v*}(\mu)\in \Prob(\mathbf{V}_v(k_v))^{\mathbf{H}_v(k_v)}=\Prob(\mathbf{V}_v(k_v)^{\mathbf{H}_v(k_v)}),
\]
where $H_v = \mathbf{H}_v(k_v)$. This implies that $\mu$ is supported on
$V^H=\prod_{v\in S} V_v(k_v)^{H_v}$,
as required.

Assume validity of the Theorem for $n\geq 1$.
The barycenter map (see \cite{Phelps})
gives an $H$-equivariant map
\[
	{\rm bar}:\Prob(\Prob^{n}(V)) \to \Prob^n(V),
	\qquad \nu\mapsto \int_{\Prob^n(V)} \mu\, d\nu(\mu).
\]
By equivariance, invariant measures are mapped to fixed points.
Thus, by induction,
\begin{equation}\label{eq:bari}
	{\rm bar}(\Prob^{n+1}(V)^{H})\subset \Prob^{n}(V)^{H}=\Prob^n(V^{H}).
\end{equation}
If, for $\nu\in \Prob^{n+1}(V)^{H}$, we have
\[
	\nu(\{\mu:\, \mu(\Prob^{n-1}(V^{H}))<1\})>0,
\]
then
\[
	{\rm bar}(\nu)(\Prob^{n-1}(V^{H}))=\int_{\Prob^n(V)} \mu(\Prob^{n-1}(V^{H}))\, d\nu(\mu)<1,
\]
which contradicts (\ref{eq:bari}).
It follows that  $\nu$-a.e measure in $\Prob^{n}(V)$ is supported on $\Prob^{n-1}(V^{H})$.
Therefore, $\nu \in \Prob^{n+1}(V^{H})$. This completes the proof.
\end{proof}

We recall the notion of discompact radical introduced by Shalom in
\cite[Proposition~1.4]{Shalom-density}.
The \emph{discompact radical}
$\Rdc(H)$ of an $S$-algebraic group $H$ is the maximal $S$-algebraic subgroup of $H$ which
does not have any nontrivial compact $S$-algebraic quotients.
We note that $\Rdc(H)$ is normal in ${H}$ and $H/\Rdc(H)$ is compact
(see \cite{Shalom-density}).

\medskip

We give two corollaries of the Borel Density Theorem.

\begin{cor} \label{c:bdt}
Let $H$ be an $S$-algebraic group and $V$ an $S$-algebraic $H$-space.
Let $(X,\xi)$ be an $H$-space with an invariant probability measure.
Then for any $H$-map $\Phi:X\to \Prob^n(V)$,
\[
	\Phi(x) \in \Prob^n\left(V^{\Rdc(H)}\right) \ \ \mathrm{for} \ \xi
        \ \mathrm{a.e.} \ x.
\]
In particular, if the $H$-action on $(X,\xi)$ is ergodic, then for some $\eta_0\in\Prob^n(V)$
\[
	\Phi(x)\in H.\eta_0
\]
for $\xi$-a.e. $x\in X$.
\end{cor}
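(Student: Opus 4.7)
The plan is to reduce the assertion to Theorem~\ref{th:BDT} applied to the discompact radical $\Rdc(H)$, by pushing the measure $\xi$ forward along $\Phi$, and then to use compactness of $H/\Rdc(H)$ to extract the orbit statement under the ergodicity hypothesis.

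First I would form the pushforward $\Phi_*\xi \in \Prob(\Prob^n(V)) = \Prob^{n+1}(V)$. Since $\xi$ is $H$-invariant and $\Phi$ is an $H$-map, $\Phi_*\xi$ is $H$-invariant, and a fortiori $\Rdc(H)$-invariant. By definition $\Rdc(H)$ is an $S$-algebraic subgroup of $H$ with no non-trivial compact $S$-algebraic quotients, so Theorem~\ref{th:BDT} applies to the $\Rdc(H)$-action on the $S$-algebraic space $V$ with exponent $n+1$, yielding
\[
\Phi_*\xi \in \Prob^{n+1}(V)^{\Rdc(H)} = \Prob^{n+1}\bigl(V^{\Rdc(H)}\bigr).
\]
This precisely means that $\Phi_*\xi$ is concentrated on the Borel subset $\Prob^n(V^{\Rdc(H)})\subset \Prob^n(V)$, which gives the first conclusion $\Phi(x)\in \Prob^n(V^{\Rdc(H)})$ for $\xi$-a.e. $x$.

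For the second conclusion, assume $(X,\xi)$ is $H$-ergodic. Because $\Rdc(H)$ is normal in $H$, the fixed-point set $V^{\Rdc(H)}$ is $H$-invariant, and the $H$-action on it is trivial on $\Rdc(H)$, hence factors through the compact group $K := H/\Rdc(H)$. Consequently the induced $H$-action on $\Prob^n(V^{\Rdc(H)})$ also factors through $K$. The measure $\Phi_*\xi$, viewed on $\Prob^n(V^{\Rdc(H)})$, is $K$-invariant and $K$-ergodic (ergodicity is inherited from $\xi$ since $\Phi$ is equivariant). Because $K$ is compact, its orbits on any Polish $K$-space are closed and the orbit equivalence relation is smooth, so every ergodic $K$-invariant probability measure is supported on a single orbit. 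Picking any $\eta_0$ in that orbit, we have $\Phi_*\xi$ supported on $K\cdot\eta_0 = H\cdot \eta_0$, and therefore $\Phi(x)\in H\cdot \eta_0$ for $\xi$-a.e. $x$.

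The substantive input is Theorem~\ref{th:BDT}; the only mildly non-trivial remaining point is the final assertion that a compact-group ergodic invariant probability measure concentrates on a single orbit, which is a standard consequence of the smoothness of compact group actions on standard Borel spaces. I do not anticipate any real obstacle beyond correctly matching the indices $n$ and $n+1$ when invoking Borel density on the pushforward measure.
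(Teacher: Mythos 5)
Your proof is correct and follows essentially the same route as the paper: push $\xi$ forward under $\Phi$, apply Theorem~\ref{th:BDT} to the $\Rdc(H)$-invariant measure $\Phi_*\xi \in \Prob^{n+1}(V)$, and then use ergodicity of $\Phi_*\xi$ under the compact group $H/\Rdc(H)$ to conclude it lives on a single orbit. The only difference is that you spell out the intermediate steps (normality of $\Rdc(H)$, smoothness of compact group actions) that the paper leaves implicit.
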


\begin{proof}
The first assertion follows from Theorem \ref{th:BDT} applied to the
measure $\Phi_*(\xi)$ and the group $\Rdc(H)$.
To prove the second assertion, observe that $\Phi_*(\xi)$ is ergodic with respect to the
action of the compact group $H/\Rdc(H)$. Hence it has to be supported on a single orbit.
\end{proof}

The following corollary explains the terminology ``Borel density theorem''.

\begin{cor} \label{c2:bdt}
Let $H$ be an $S$-algebraic group, and $M<H$
a closed subgroup, so that $H/M$ has a finite $H$-invariant measure.
Then $\Rdc(H)$ is contained in the Zariski closure of $M$.
\end{cor}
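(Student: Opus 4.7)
The plan is to deduce this from Theorem~\ref{th:BDT} by constructing an $S$-algebraic $H$-variety $V$ on which the Zariski closure of $M$ appears as a point stabilizer, and then pushing forward the $H$-invariant probability measure on $H/M$.

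First, I would denote by $\bar{M}$ the $S$-algebraic Zariski closure of $M$ in $H$: writing $H=\prod_{v\in S}\mathbf{H}_v(k_v)$ and $M_v$ for the image of $M$ in $\mathbf{H}_v(k_v)$, one sets $\bar{M}=\prod_v\overline{M_v}^{\mathrm{Zar}}(k_v)$, where the Zariski closure is taken inside $\mathbf{H}_v$. Since $\mathrm{char}\,k_v=0$, Chevalley's theorem applied factorwise will produce, for each $v\in S$, a $k_v$-rational representation $\mathbf{H}_v\to\GL(\mathbf{W}_v)$ and a line $\mathbf{L}_v\subset\mathbf{W}_v$ whose projective stabilizer in $\mathbf{H}_v$ equals $\overline{M_v}^{\mathrm{Zar}}$. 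Setting $V=\prod_v\mathbb{P}(\mathbf{W}_v)(k_v)$ as an $S$-algebraic $H$-space and $\xi=([\mathbf{L}_v])_{v\in S}\in V$, the stabilizer of $\xi$ in $H$ will be precisely $\bar{M}$.

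Since $M\subset\bar{M}=\mathrm{Stab}_H(\xi)$, the assignment $hM\mapsto h\cdot\xi$ descends to a well-defined Borel $H$-map $H/M\to V$, and pushing forward the $H$-invariant probability measure on $H/M$ will give an $H$-invariant probability measure $\mu$ on $V$ concentrated on the orbit $H\cdot\xi$. Applying Theorem~\ref{th:BDT} to the action of $\Rdc(H)$ on the $S$-algebraic variety $V$ — recall that $\Rdc(H)$ has no nontrivial compact $S$-algebraic quotients by construction — will yield that $\mu$ is supported on $V^{\Rdc(H)}$. Consequently $\mu(H\cdot\xi\cap V^{\Rdc(H)})=1$, so some $h\in H$ satisfies $\Rdc(H)\subset\mathrm{Stab}_H(h\cdot\xi)=h\bar{M}h^{-1}$.

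To finish, I would invoke normality: $\Rdc(H)$ is normal in $H$, so conjugating the previous inclusion by $h^{-1}$ converts it into $\Rdc(H)=h^{-1}\Rdc(H)h\subset\bar{M}$, as desired. The only step requiring real thought is the first one, namely realizing $\bar{M}$ as a point stabilizer on a genuine $S$-algebraic $H$-variety; this is handled by Chevalley's theorem factor by factor. Everything else is a direct application of the Borel density theorem together with the normality of the discompact radical.
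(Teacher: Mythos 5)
Your proposal is correct and takes essentially the same route as the paper: the paper likewise takes the Zariski closure $\overline{M}$, forms the $S$-algebraic $H$-space $V=H/\overline{M}$, pushes the finite $H$-invariant measure on $H/M$ forward along the natural $H$-map, and applies the Borel density theorem (packaged as Corollary~\ref{c:bdt}) to conclude that $\Rdc(H)$ acts trivially on $V$ and hence lies in $\overline{M}$. The only cosmetic difference is that you realize the target variety explicitly as a product of projective spaces via Chevalley's theorem and finish by invoking normality of $\Rdc(H)$, whereas the paper uses $H/\overline{M}$ directly as the $S$-algebraic $H$-space and lets Corollary~\ref{c:bdt} do the rest in one step.
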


\begin{proof}
Let $\overline{M}$ be the Zariski closure of $M$, and $V=H/\overline{M}$.
Applying Corollary \ref{c:bdt} to the $H$-map $H/\overline{M}\to V$,
we deduce that $\Rdc(H)$ acts trivially on $H/\overline{M}$, hence
$\Rdc(H)< \overline{M}$.
\end{proof}


\section{Proofs of the basic results} 
\label{sec:proofs_of_the_basic_results}

Following these preparations we are in position to prove Theorems
\ref{T:equiv-Borel-maps} and \ref{T:quasi-factors}. They correspond to the special cases $n=0,1$ of the following result.

\begin{thm}\label{T:algebraic-for-specialH}
	Let $G$ be an $S$-algebraic group, $H$ an $S$-algebraic subgroup
	with no compact $S$-algebraic factors, let $\Gamma<G$ be a lattice so that
	$\Gamma\acts G/H$ is ergodic, and let $V$ be an $S$-algebraic $G$-space.\\
	Then for every $n=0,1,\dots$, viewing $\Prob^n(V)$ as a Borel $\Gamma$-space,
	and $G/H$ as an ergodic $\Gamma$-space, there is a natural isomorphism
	\[
		\Map_\Gamma(G/H,\Prob^n(V))=\Map_G(G/H,\Prob^n(V))\cong \Prob^n(V^H)
	\]
	where $w\in \Prob^n(V^H)$ corresponds to the map $\Phi_w(gH)=g.w$.
\end{thm}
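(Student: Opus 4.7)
The plan is to apply Frobenius reciprocity (Corollary \ref{p:duality}) to convert the problem about $\Gamma$-maps out of $G/H$ into a problem about $H$-maps out of $G/\Gamma$, and then invoke the Borel density Corollary \ref{c:bdt}. Specifically, applying Corollary \ref{p:duality} with $L=\Gamma$ and $Z=\Prob^n(V)$ gives a natural bijection
\[
\Map_\Gamma(G/H,\Prob^n(V))\ \simeq\ \Map_H(G/\Gamma,\Prob^n(V)).
\]
Since $\Gamma$ is a lattice, $G/\Gamma$ carries a finite $G$-invariant probability measure $\xi$, and the assumed ergodicity of $\Gamma\acts G/H$ is equivalent (as recalled in \S\ref{sub:the_class_of_groups}) to ergodicity of $H\acts G/\Gamma$.

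The next step is to apply Corollary \ref{c:bdt} to an arbitrary $H$-map $F:G/\Gamma\to \Prob^n(V)$. Because $H$ has no non-trivial compact $S$-algebraic factors, $\Rdc(H)=H$, and the first conclusion of the corollary gives $F(y)\in\Prob^n(V^H)$ for $\xi$-a.e.\ $y$. The second (ergodic) conclusion places the essential image of $F$ inside a single $H$-orbit in $\Prob^n(V)$; but $H$ fixes $V^H$ pointwise and therefore fixes every point of $\Prob^n(V^H)$. Intersecting these two conclusions forces $F$ to be essentially constant, and establishes
\[
\Map_H(G/\Gamma,\Prob^n(V))\ \cong\ \Prob^n(V^H),
\]
where $w\in\Prob^n(V^H)$ corresponds to the constant $H$-map with value $w$.

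It remains to trace the bijection backwards through Remark \ref{R:duality} to confirm the explicit form stated in the theorem. Fixing a Borel cross-section $\sigma:G/\Gamma\to G$, the correspondence $f\mapsto F$ of that remark satisfies $F(y)=\sigma(y).f(\sigma(y)^{-1}H)$. Inverting this on constant $F\equiv w$ and setting $g=\sigma(y)^{-1}$, one recovers $f(gH)=g.w$, which is manifestly a $G$-map. Hence every $\Gamma$-map $G/H\to \Prob^n(V)$ agrees a.e.\ with such a $G$-map, and the three sets in the theorem coincide, the latter two with $\Prob^n(V^H)$ via $w\leftrightarrow \Phi_w$. The only delicate point in this scheme is the combined use of both halves of Corollary \ref{c:bdt}; everything else is formal manipulation of the Frobenius correspondence.
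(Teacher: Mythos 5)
Your proposal is correct and follows essentially the same route as the paper's own proof: convert $\Gamma$-maps out of $G/H$ to $H$-maps out of $G/\Gamma$ via Frobenius reciprocity, then apply Corollary \ref{c:bdt} (with $\Rdc(H)=H$) together with ergodicity of $H\acts G/\Gamma$ to conclude such maps are essentially constant with value in $\Prob^n(V)^H=\Prob^n(V^H)$. The paper packages the last step through Corollary \ref{c:frob}, comparing $\Map_H(G/\Gamma,Y)$ with $\Map_H(G/G,Y)\cong Y^H$, whereas you trace the explicit correspondence of Remark \ref{R:duality} back to the stated formula $\Phi_w(gH)=g.w$; these are the same argument with slightly different bookkeeping.
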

%
%
\begin{proof}
Denote $Y=\Prob^n(V)$. By Corollary \ref{c:bdt}, we have the bijections
\[
	\Map_{H}(G/\Gamma,Y)\simeq Y^{H} \simeq \Map_{H}(G/G,Y).
\]
Hence, it follows from Corollary~\ref{c:frob} that the inclusion
\[
	\Map_{G}(G/H,Y)\hookrightarrow \Map_{\Gamma}(G/H,Y)
\]
is a bijection as well. This means that every $\Gamma$-map $G/H\to Y$ agrees
almost everywhere with a $G$-map.
Finally, it is easy to see that every $G$-map is of the given form.
\end{proof}

ֿ\begin{proof}[Proof of Theorem \ref{T:factors}] Let $\mu_y$ be the measures on $G/H$ coming from the disintegration of $\mu$. We may assume that $y \mapsto \mu_y$ is a $\Gamma$-map.
Applying Theorem \ref{T:quasi-factors} to the $\Gamma$-map $\phi: G/H \to \Prob(G/H)$ defined by $\phi(x) = \mu_{p(x)}$, we see that there is $\nu_0 \in \Prob(G/H)$, supported on $ (G/H)^H \cong \mathcal{N}_G(H)/H$, such that  $\phi(gH)=g\nu_0$. In particular $\phi$ is a $G$-map. Let $L$ denote the stabilizer of $\nu_0$. Since $y \mapsto \mu_y$ is injective, $Y$ is identified with its image under $y \mapsto \mu_y = \mu_{p(gH)} = g\nu_0$; that is $Y$ is isomorphic to $G/L$, and under this identification, $p$ is given by $gH \mapsto gL$.
Since $L < \mathcal{N}_G(H)$, we have that $H \triangleleft L$, and since the disintegration of $\mu_{G/H}$ has $L$-invariant probability measures as fiber measures, there is a finite invariant measure on $L/H$. This implies that $L/H$ is compact.
\end{proof}

ֿ\begin{proof}[Proof of Theorem \ref{T:joinings}] As explained in \S\ref{sub:relatively_p_m_p_joinings_and_quasi_factors}, a relatively p.m.p. joining of $G/H_1$ and $G/H_2$ gives rise to  a $\Gamma$-map $G/H_1 \to \Prob(G/H_2)$. By Theorem \ref{T:quasi-factors}, $(G/H_2)^{H_1}$ is nonempty, i.e. $H_1$ is contained in a conjugate of $H_2$. Reversing the roles of $H_1$ and $H_2$ we see that $H_1$ contains a conjugate of $H_2$. Since the $H_i$ are $S$-algebraic groups, this implies that $H_1$ and $H_2$ are conjugate.
\end{proof}


%



\section{The Mautner property}\label{s:mautner} 

Let $G$ be a locally compact second countable group and $H$ and $L$ be
closed subgroups.
We say that $(H,L,G)$ has the \emph{Mautner property} if
for every continuous unitary representation $\pi:G\to
\mathcal{U}(\Hilbert)$, a vector which is fixed by $H$ is already
fixed by $L$, i.e.
\[
	\Hilbert^{\pi(H)} \subset\Hilbert^{\pi(L)}.
\]
When $G$ is clear, we will say that the pair $(H,L)$ has the Mautner property.
In fact (fixing $G$ and $H$) there is a maximal $L$ such that $(H,L,G)$ has the Mautner property:
take the intersection of all fixators of all spaces of the form $\Hilbert^{\pi(H)}$
over all unitary $G$-representation $\pi$. 
We call this maximal $L$ \emph{the Mautner envelope} of $H$ in $G$,
and denote it by $\MT{H}$. The alert reader will note that our
notation suppresses the dependence on $G$.
Thus $(H,\MT{H},G)$ has the Mautner property, and $(H,L,G)$ has the
Mautner property if and only if $L<\MT{H}$.
In this chapter we analyze Mautner envelopes in $S$-algebraic groups.
A closed subgroup of an $S$-algebraic group is called \emph{f.i.-algebraic}
if it is a finite-index subgroup of an $S$-algebraic subgroup $L\subset G$.
While the Mautner envelope does not have to be normal or f.i.-algebraic in general,
we shall show (see Theorem \ref{th:mau} below) that it always contains a cocompact
subgroup satisfying both of these properties.
Before doing so, let us collect some trivial observations.

\begin{lem} \label{l:mau}
Let $H$ be a closed subgroup in a locally compact second countable group $G$.
\begin{enumerate}
\item $\MT{\{e\}}=\{e\}$.
\item If $H$ is compact, then $H=\MT{H}$.
\item If $\MT{H}$ contains a closed subgroup $N$ which is normal in $G$, then
		\[\rhoֿ\left(\MT{H}\right)=\MT{\rho(H)}\]
		where $\rho:G\to G/N$ is the factor map.
\item If $N$ is a closed normal subgroup of $G$ containing $H$ then $\MT{H}<N$.
\end{enumerate}
\end{lem}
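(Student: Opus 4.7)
The plan is to handle each of the four claims by selecting a suitable continuous unitary representation of $G$ and invoking the defining property of $\MT{H}$ as the maximal closed subgroup fixing $\Hilbert^{\pi(H)}$ for every unitary representation $\pi$.

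For (1), the left regular representation of $G$ on $L^2(G)$ is faithful; every vector is trivially $\{e\}$-fixed, so $\MT{\{e\}}$ must act trivially on $L^2(G)$, forcing $\MT{\{e\}}=\{e\}$. For (2), the inclusion $H \subset \MT{H}$ is immediate from the definition. For the reverse, given $g \notin H$, I would use compactness of $H$ to choose a small symmetric neighborhood $U$ of the identity such that $g \notin H U U^{-1} H$, and then average a nonnegative compactly-supported bump function on $U$ against the left $H$-action on $L^2(G)$ to produce an $H$-fixed vector $f$ with support in a small neighborhood of $H$; the translate $\pi(g)f$ then has disjoint support, so $g \notin \MT{H}$. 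For (4), I would lift the left regular representation of $G/N$ through $\rho:G \to G/N$ to a unitary representation of $G$ in which $N$ (and hence $H$, since $H \subset N$) acts trivially; every vector is then $H$-fixed, so any $g \in \MT{H}$ must act trivially, i.e., satisfy $\rho(g)=e$, giving $\MT{H}\subset N$.

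For (3), both inclusions require attention. The forward inclusion $\rho(\MT{H}) \subset \MT{\rho(H)}$ follows by pullback: any unitary representation $\bar\pi$ of $G/N$ lifts through $\rho$ to a representation $\pi = \bar\pi \circ \rho$ of $G$ in which $\rho(H)$-fixed vectors coincide with $H$-fixed vectors, so these are fixed by $\MT{H}$, hence by $\rho(\MT{H})$ under $\bar\pi$. The reverse inclusion $\MT{\rho(H)} \subset \rho(\MT{H})$ is where the hypothesis $N \subset \MT{H}$ plays an essential role, and is the step I expect to be the main obstacle. Given any unitary representation $\pi$ of $G$, every vector $v \in \Hilbert^{\pi(H)}$ lies in $\Hilbert^{\pi(\MT{H})} \subset \Hilbert^{\pi(N)}$; normality of $N$ in $G$ ensures $\Hilbert^{\pi(N)}$ is a $G$-invariant closed subspace, on which $\pi$ descends to a unitary representation $\bar\pi$ of $G/N$. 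Since $\MT{\rho(H)}$ fixes all $\rho(H)$-fixed vectors under $\bar\pi$, any preimage in $G$ fixes all $H$-fixed vectors under $\pi$, hence lies in $\MT{H}$; projecting then gives $\MT{\rho(H)} \subset \rho(\MT{H})$. The delicate point is verifying that this preimage ambiguity is absorbed precisely because $N \subset \MT{H}$, so that membership in $\MT{H}$ depends only on the coset modulo $N$.
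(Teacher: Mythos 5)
Your proposal is correct and follows essentially the same strategy as the paper: faithfulness of the (quasi-)regular representation for (1) and (2), pullback of $L^2(G/N)$ for (4), and lifting/descending representations along $\rho$ for the two inclusions of (3) — the paper phrases (2) via $L^2(H\backslash G)$ and deduces (1) as its special case, but the substance is the same. The closing hedge in your treatment of (3) about ``preimage ambiguity'' is unnecessary: once you have $\rho^{-1}\bigl(\MT{\rho(H)}\bigr)\subset\MT{H}$, surjectivity of $\rho$ immediately gives $\MT{\rho(H)}\subset\rho\bigl(\MT{H}\bigr)$ with no further delicacy.
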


\begin{proof}\hfill{}\\
\noindent{(2)} Let $\pi: G \to H \backslash G$ be the quotient map. Given $g \notin H$,
let $f$ be a continuous compactly supported function on $H\backslash G$ which vanishes on $\pi(g)$ but not on $\pi(e)$. Then $f \circ \pi \in L^2(G)$ is $H$-invariant but not $g$-invariant.
%
%
(1) is a special case of (2). \\

\noindent{(3)}
The inclusion $\subset$ follows from the fact that any
$G/N$-representation is also a $G$-representation. For the other
inclusion, let $M = \rho^{-1}\left(\overline{\rho(H)}^{MT}
\right)$. We need to show that $(H,M)$ has the Mautner property. Since
$N < H$, given any unitary $G$-representation
$\pi:G\to\mathcal{U}(\Hilbert)$, one has
\[
	\Hilbert^{\pi(H)} \subset\Hilbert^{\pi(N)},
\]
and since $N$ is normal in $G$, $\Hilbert^{\pi(N)}$ is
$\pi(G)$-invariant, so induces a unitary $G/N$-representation
on $\Hilbert^{\pi(N)}$. In this representation
$\overline{\rho(H)}^{MT}$ acts trivially, so $M$ is trivial on
$\Hilbert^{\pi(N)}$. In particular $M$ fixes $\Hilbert^{\pi(H)}$.  \\

\noindent{(4)}
Can be seen by considering the representation $L^2(G/N)$.
\end{proof}

\medskip

Let us review some basic properties of $S$-algebraic groups
that follow from well-known properties of algebraic groups over local fields of characteristic zero
(see, for instance, \cite{PR}).

\begin{prop}[cf. Proposition~3.3 and Theorem 6.14 in \cite{PR}]\label{p:alg1}\hfill{}\\
Let $\mathbf{X}= \prod_{v \in S} \mathbf{X}_v$ be a product of algebraic varieties over local fields with a transitive action of $\mathbf{G}= \prod_{v \in S} \mathbf{G}_v$.
Then the set $X$ of $S$-points of $\mathbf{X}$ is a union of finitely many $G$-orbits.
Each of these orbits is open and closed in $X$.
\end{prop}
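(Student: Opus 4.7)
The plan is to first reduce the problem to a single factor and then to combine the single-factor conclusions. If we can show for each $v \in S$ that $X_v = \mathbf{X}_v(k_v)$ decomposes as a finite disjoint union of clopen $G_v$-orbits, then, since any $G$-orbit in $X = \prod_v X_v$ is automatically a product of $G_v$-orbits in $X_v$ (the action is componentwise), the product decomposition $X = \prod_v X_v$ breaks into finitely many open-and-closed $G$-orbits. Hence the essential content lies in the single-factor case, and I will work with a fixed local field $k = k_v$, an algebraic group $\mathbf{G} = \mathbf{G}_v$ and a $\mathbf{G}$-variety $\mathbf{X} = \mathbf{X}_v$, both defined over $k$ and with $\mathbf{G}$ acting transitively on $\mathbf{X}$.

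The first step in the single-factor case is \emph{openness of orbits}. Fix $x_0 \in X = \mathbf{X}(k)$ and let $\mathbf{H}$ be the stabilizer of $x_0$ in $\mathbf{G}$. Because we are in characteristic zero, $\mathbf{H}$ is smooth, and the orbit map $\pi_{x_0}\colon \mathbf{G} \to \mathbf{X}$, $g \mapsto g \cdot x_0$, is a smooth surjection (its differential at $e$ is surjective onto $T_{x_0}\mathbf{X}$). Passing to $k$-points and invoking the implicit function theorem for smooth maps of analytic manifolds over the local field $k$, I conclude that $\pi_{x_0}(G) = G \cdot x_0$ is open in $X$ in the analytic topology. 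Applying this to every $x_0 \in X$ shows that each $G$-orbit is open; since $X$ is a disjoint union of $G$-orbits, each orbit is also closed.

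The second step is \emph{finiteness of the number of orbits}. I will use the standard Galois-cohomological parametrization: fixing $x_0 \in X$ and $\mathbf{H} = \mathrm{Stab}_{\mathbf{G}}(x_0)$, the transitivity of $\mathbf{G}$ on $\mathbf{X}$ together with the long exact sequence of pointed sets gives a natural injection from the set of $G$-orbits on $X$ into the kernel of $H^1(k, \mathbf{H}) \to H^1(k, \mathbf{G})$. By a theorem of Borel--Serre, for a local field $k$ of characteristic zero and any algebraic $k$-group $\mathbf{H}$, the set $H^1(k, \mathbf{H})$ is finite; hence $X$ has only finitely many $G$-orbits.

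Combining the two single-factor statements and then taking the product over $v \in S$ yields the result: $X$ is a finite disjoint union of $G$-orbits, each of which is a product of $G_v$-orbits and hence open and closed in the product topology. The step I expect to be most delicate to state cleanly is the openness of orbits, since it relies on a characteristic-zero smoothness argument and the analytic implicit function theorem; the finiteness via $H^1$ is then an invocation of a well-known theorem and the globalization from single factors to the product is a direct product argument. Given the paper's citation to \cite[Proposition~3.3 and Theorem 6.14]{PR}, I would in practice simply quote those references for the two single-factor inputs and present the above as the brief bridging argument.
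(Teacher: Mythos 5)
Your proposal is correct, and since the paper gives no proof here (it simply cites Platonov--Rapinchuk), there is nothing to contrast: your outline is exactly the standard argument underlying those references. The factorization of $G$-orbits as products of $G_v$-orbits is immediate from the componentwise action, the openness of each $G_v$-orbit follows from smoothness of orbit maps in characteristic zero together with the implicit function theorem over the local field, closedness follows from openness of all orbits, and finiteness follows from the injection of the orbit set into $\ker\bigl(H^1(k_v,\mathbf{H}_v)\to H^1(k_v,\mathbf{G}_v)\bigr)$ together with the Borel--Serre finiteness of $H^1$ over a characteristic-zero local field.
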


The following is a special case of Proposition \ref{p:alg1}:

\begin{prop}\label{p:alg2}\hfill{}\\
Let $\rho:G\to H$ be an algebraic homomorphism of $S$-algebraic groups.
Then the image $\rho({G})$ is f.i.-algebraic in $H$.
\end{prop}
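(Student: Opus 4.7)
The plan is to reduce to Proposition \ref{p:alg1} by producing a natural $S$-algebraic variety on which $\mathbf{G}$ acts transitively, whose $S$-points are exactly the ambient $S$-algebraic group in which the image sits.

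First I would write $\rho$ as a product of $k_v$-algebraic homomorphisms $\rho_v : \mathbf{G}_v \to \mathbf{H}_v$. Since each $\rho_v$ is a morphism of algebraic groups defined over $k_v$, the image $\mathbf{L}_v \df \rho_v(\mathbf{G}_v)$ is a $k_v$-algebraic subgroup of $\mathbf{H}_v$ (this is the standard fact that the image of an algebraic group under an algebraic homomorphism is Zariski closed and defined over the base field in characteristic zero). Set $\mathbf{L} \df \prod_{v \in S} \mathbf{L}_v$; this is an $S$-algebraic subgroup of $\mathbf{H}$, and by construction $\rho(G) \subset L \df \prod_{v \in S} \mathbf{L}_v(k_v)$. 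It therefore suffices to show that $\rho(G)$ has finite index in $L$.

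Next I would set up the hypothesis of Proposition \ref{p:alg1}. Consider $\mathbf{X} \df \mathbf{L}$ as a $\mathbf{G}$-variety via the action $\mathbf{g} \cdot \mathbf{l} \df \rho(\mathbf{g})\, \mathbf{l}$, which is a $k_v$-algebraic action in each factor. Because $\rho_v(\mathbf{G}_v) = \mathbf{L}_v$, this action is transitive on each factor, hence transitive on $\mathbf{L}$. Proposition \ref{p:alg1} then tells us that the set of $S$-points $L$ decomposes into finitely many $G$-orbits, all open and closed.

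Finally I would observe that the $G$-orbits on $L$ under this action are precisely the left cosets of the subgroup $\rho(G)$ in $L$: the orbit of $\mathbf{l} \in L$ is $\{\rho(g)\mathbf{l} : g \in G\} = \rho(G)\,\mathbf{l}$. Finitely many cosets means $[L : \rho(G)] < \infty$, so $\rho(G)$ is an f.i.-algebraic subgroup of $H$. The only potential subtlety — and the one point where I would be careful — is the Zariski-closedness of $\rho_v(\mathbf{G}_v)$ and that it is defined over $k_v$; this is where the characteristic-zero assumption on the local fields enters, and it is exactly the content invoked from \cite{PR}.
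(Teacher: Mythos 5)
Your proof is correct and fills in exactly the details the paper suppresses when it writes that Proposition~\ref{p:alg2} is ``a special case of Proposition~\ref{p:alg1}'': one views the Zariski closure $\mathbf{L}=\prod_v \rho_v(\mathbf{G}_v)$ as a homogeneous $\mathbf{G}$-variety under $g\cdot l=\rho(g)l$, applies Proposition~\ref{p:alg1} to get finitely many open-closed $G$-orbits in $L$, and identifies those orbits with cosets of $\rho(G)$. (The only cosmetic slip is calling the orbits $\rho(G)l$ ``left cosets''; either convention gives the same index, so nothing is affected.)
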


\medskip

\begin{lem}\label{L:alm-alg}
	Let $\mathbf{G}$ be an $S$-algebraic group and $G$ the corresponding locally compact group.
	For any closed subgroup $M<G$ the collection $\mathcal{N}$ of all subgroups $N<M$ which are normal in $G$
	and correspond to Zariski connected, f.i.-algebraic subgroups in $G$,
	contains a unique maximal element.
\end{lem}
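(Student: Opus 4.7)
The plan is to establish two structural properties of $\mathcal{N}$: \textbf{(i)} $\mathcal{N}$ is closed under forming products of pairs, and \textbf{(ii)} every ascending chain in $\mathcal{N}$ stabilizes. Together these will force the existence of a unique maximum, by the following one-line argument: let $N_0$ be any maximal element (furnished by (ii) and Zorn), and for any $N'\in\mathcal{N}$ the join $N_0N'\in\mathcal{N}$ contains $N_0$, so by maximality $N_0N'=N_0$ and hence $N'\subset N_0$, giving both that $N_0$ dominates all of $\mathcal{N}$ and is unique.

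For (i), given $N_1,N_2\in\mathcal{N}$, I would let $\mathbf{L}_i$ denote the Zariski closure of $N_i$ in $\mathbf{G}$; by the Zariski-connectedness hypothesis each $\mathbf{L}_i=\prod_v\mathbf{L}_{i,v}$ is a connected $S$-algebraic subgroup in which $N_i$ has finite index in $L_i=\prod_{v\in S}\mathbf{L}_{i,v}(k_v)$. Since $N_i\triangleleft G$, $G$ normalizes $\mathbf{L}_i$; in particular $N_2\subset G$ normalizes $\mathbf{L}_1$, and as the normalizer of $\mathbf{L}_1$ in $\mathbf{G}$ is Zariski closed and contains $N_2$, it contains $\mathbf{L}_2$. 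Therefore $\mathbf{L}_1\mathbf{L}_2$ is a Zariski-connected $S$-algebraic subgroup. Correspondingly $N_1N_2$ is a subgroup (using $N_1\triangleleft G$), is contained in $M$, and is normal in $G$. For the f.i.-algebraic property I would invoke Proposition~\ref{p:alg1}: $L_1L_2$ sits in $\prod_{v}(\mathbf{L}_{1,v}\mathbf{L}_{2,v})(k_v)$ with finite index, and inside $L_1L_2$ the subgroup $N_1N_2$ has index at most $[L_1:N_1]\,[L_2:N_2]<\infty$. Hence $N_1N_2\in\mathcal{N}$.

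For (ii), an ascending chain $N_1\subset N_2\subset\cdots$ in $\mathcal{N}$ produces an ascending chain of Zariski closures in $\mathbf{G}$, which must stabilize at some connected $\mathbf{L}$ by Noetherianity. Past that index all $N_k$ are finite-index subgroups of the fixed $L=\prod_v\mathbf{L}_v(k_v)$, so the sequence of indices $[L:N_k]$ is a non-increasing sequence of positive integers, which is eventually constant.

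The main obstacle is (i), specifically two points requiring care: first, that $\mathbf{L}_1\mathbf{L}_2$ is genuinely a subgroup, which is handled by the Zariski-closedness of the normalizer together with the Zariski density of $N_2$ in $\mathbf{L}_2$; and second, that passage to $k_v$-points preserves finite index, which is precisely what Proposition~\ref{p:alg1} delivers.
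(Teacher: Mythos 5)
Your proposal is correct and follows essentially the same route as the paper: both establish closure of $\mathcal{N}$ under pairwise products via Proposition~\ref{p:alg1} (the paper phrases the finite-index step through the action of $\overline{U}\times\overline{V}$ on $\overline{U\cdot V}$, you through the product variety $\mathbf{L}_1\mathbf{L}_2$, which is the same application), and both get stabilization of ascending chains from Noetherianity of Zariski closures combined with the finiteness of indices. The only cosmetic difference is that you make the normalizer argument for $\mathbf{L}_1\mathbf{L}_2$ being a Zariski-connected subgroup explicit where the paper leaves it implicit.
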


\begin{Def}\label{def:kernel}
	The unique maximal subgroup $N$ of $M$ as above is called the \emph{f.i.-algebraic kernel} of $M$,
	and is denoted by $\Na(M)$.
\end{Def}

To justify the term, we note that the f.i.-algebraic kernel $\Na(M)$ of $M$ is the
maximal f.i.-algebraic, Zariski connected subgroup of $G$
that acts trivially on $G/M$.

\medskip

\begin{proof}[Proof of Lemma~\ref{L:alm-alg}]\hfill{}\\
First let us show that a maximal element of $\mathcal{N}$ (with respect to inclusion)  must be unique.
Let $U, V \in \mathcal{N}$, and
denote by $\mathbf{U}$, $\mathbf{V}$ their Zariski closures in $\mathbf{G}$,
and by $\overline{U}$, $\overline{V}$ the corresponding locally compact subgroups in $G$.
Then
\[
	[\overline{U}:U]<\infty,\qquad[\overline{V}:V]<\infty.
\]
Applying Proposition \ref{p:alg1} to the action of $\overline{U} \times \overline{V}$ on $\overline{U\cdot V}$, we see that $\overline{U} \cdot \overline{V}$ is of finite index in $\overline{U\cdot V}$. Similarly $UV$ is of finite index in $\overline{U} \cdot \overline{V}$. Clearly $UV$ is normal and contained in $M$, therefore $UV \in \mathcal{N}$.

We now show that a maximal element of $\mathcal{N}$ exists.
If $U_1< U_2<\cdots$ is an ascending chain in $\mathcal{N}$,
then the corresponding chain of
Zariski closures 
stabilizes after
finitely many steps because the $\overline{U}_i$ are connected.
Since $U_i$ has finite index in $\overline{U}_i$, this implies that
the original chain stabilizes as well.
\end{proof}

Below we use results
from \cite{MT,Wa2} in order to prove the following theorem,
which is the main result of this section. We remind our reader that in our setting all fields $k_v$ are of characteristic zero.

\begin{thm}[Mautner envelope] \label{th:mau}\hfill{}\\
Let $G$ be an $S$-algebraic group and $H$ an $S$-algebraic subgroup of $G$.
Then
\begin{enumerate}
	\item $\MT{H}/\Na\left(\MT{H}\right)$ is a compact group.
	\item $\MT{H}/\Na\left(\MT{H}\right)$ is a finite group, provided $H$ has no non-trivial
	compact $S$-algebraic quotients.
\end{enumerate}
\end{thm}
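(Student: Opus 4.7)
The plan is to produce a Zariski-connected, f.i.-algebraic subgroup $N \subset L := \MT{H}$ that is normal in $G$ and satisfies $L/N$ compact; such an $N$ will then be contained in $\Na(L)$ by maximality, so that $L/\Na(L)$ is compact as well, proving (1). Part (2) will follow by showing that, under the hypothesis that $H$ has no nontrivial compact $S$-algebraic quotient, the residual compact group $L/\Na(L)$ must in fact be finite. The main obstacle, as one would expect, is to genuinely control the non-unipotent part of the Mautner envelope---this is where the external inputs \cite{MT} and \cite{Wa2} carry the essential weight.

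To carry out the first step I would invoke the classical Mautner lemma for unipotent elements in its algebraic form over local fields of characteristic zero, as made precise by Margulis--Tomanov \cite{MT}. For each $v \in S$ and each unipotent $u \in \mathbf{H}_v(k_v)$, there is a Zariski-connected $k_v$-algebraic subgroup $\mathbf{M}_v(u) \subset \mathbf{G}_v$ containing $u$ such that every $u$-fixed vector in any unitary $G_v$-representation is automatically $\mathbf{M}_v(u)(k_v)$-fixed. Let $H_0 \subset G$ be a Zariski-connected f.i.-algebraic subgroup containing $H$ and all the $\mathbf{M}_v(u)(k_v)$, obtained by taking the connected component of the algebraic subgroup they generate. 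Then $H_0$ is contained in $L$ by construction.

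For the second step I would appeal to Wang's refinement \cite{Wa2} of the Mautner phenomenon to deduce that $L/H_0$ is compact: once the unipotent part of $H$ has been absorbed into $H_0$, the residual action of $H$ on the orthocomplement of $H_0$-fixed vectors in any unitary representation factors through a compact quotient, which constrains any element of $L$ to lie in a compact extension of $H_0$. To produce a $G$-normal subgroup, consider the intersection $N := \bigcap_{g \in G} g H_0 g^{-1}$; by the Noetherian descending chain condition on Zariski-closed subvarieties of $G$ (as already exploited in Lemma~\ref{L:alm-alg}), this intersection is attained by finitely many conjugates and is therefore Zariski-closed. Passing to its identity component yields a Zariski-connected, f.i.-algebraic, $G$-normal subgroup of $L$, which is thus contained in $\Na(L)$. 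Then $L/\Na(L)$ is a continuous quotient of the compact group $L/N$, proving (1).

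For (2), assume $H = \Rdc(H)$ and let $K := L/\Na(L)$, which is compact by (1); let $\overline{H}$ be the image of $H$ in $K$. On the one hand, $\overline{H}$ is a continuous image of the $S$-algebraic group $H$ in a compact group, so since $H$ has no nontrivial compact $S$-algebraic quotient, the image of $H$ in the identity component $K^\circ$---a compact Lie group---is contained in a finite subgroup. On the other hand, maximality of the Mautner envelope, applied to the regular representation of $K$ pulled back via $L \to K$ and induced up to $G$, forces $\overline{H}$ to generate $K$ topologically. Combining the two, $K$ itself is finite.
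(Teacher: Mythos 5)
Your proposal has a gap that is fatal already for part (1), and it is precisely at the point where the paper's proof does its real technical work. You build the algebraic subgroup $H_0 \subset L := \MT{H}$ out of the Mautner envelopes of the \emph{unipotent} elements of $H$ (via \cite{MT}) together with $H$ itself, and then you claim that $L/H_0$ is compact because the residual Mautner phenomenon is compact. But the Mautner envelope of a \emph{split torus} is emphatically not a compact extension of the torus: take $G=\SL_2(\R)$ and $H=A$ the diagonal subgroup. Here $H$ has no nontrivial unipotent elements, so your $H_0$ is just $A$, while $\MT{A}=\SL_2(\R)$ (any $A$-fixed vector in a unitary $\SL_2(\R)$-representation is fixed by $U^+$ and $U^-$, hence by all of $G$). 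Thus $L/H_0 = G/A$ is noncompact, and the appeal to Wang's results cannot save the step --- what Wang's \cite{Wa1} actually supplies is exactly the opposite: that the Mautner envelope of a split torus contains the full horospherical subgroups. The paper avoids this by taking $H^\vee$ to be generated by \emph{all} one-dimensional $k_v$-split subgroups, explicitly including $\mathbf{G}_m$-type subgroups, and then treating the additive and multiplicative cases separately in Proposition~\ref{pr:mau} (using \cite{MT} and \cite{Wa1} respectively). Omitting the multiplicative case collapses the argument.

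Two further steps would also need repair even if the first were fixed. First, you pass from $H_0$ to $N:=\bigcap_{g\in G} gH_0g^{-1}$ and then assert that $L/\Na(L)$ is compact as a quotient of $L/N$; but $N\subset H_0$, so $L/N$ surjects onto $L/H_0$ with kernel $H_0/N$, and nothing controls $H_0/N$. The paper sidesteps this entirely by building its normal subgroup \emph{upward}, as the normal closure $H^\wedge$ of $H^\vee$ in $G$ (whose algebraicity requires a separate argument, Theorem~\ref{th:alg}), so that the quotient $L/N$ can be bounded by reducing modulo $N$ and observing that $\rho(H)$ is compact (Lemma~\ref{l:mau}(2)--(3)). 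Second, in part (2) you argue that since $H$ has no compact $S$-algebraic quotient, the image of $H$ in the compact Lie group $K^\circ$ is finite; this inference is false as stated --- $\mathbf{G}_a(\R)=\R$ has no nontrivial compact algebraic quotient yet maps densely onto $S^1$. The paper instead shows directly that $\MT{H}$ is sandwiched between $H^\wedge$ and its Zariski closure, which has $H^\wedge$ as a finite-index subgroup (Proposition~\ref{pr:alg}), and finiteness of $\MT{H}/\Na(\MT{H})$ falls out from that sandwich rather than from any claim about images in compact groups.
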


Let $\mathbf{H}$ be an algebraic group over a local field $k$, and $\mathbf{A}<\mathbf{H}$
be a one-dimensional connected algebraic subgroup, isomorphic over $k$ either to the
additive group $\mathbf{G}_a$ or to the multiplicative group $\mathbf{G}_m$.
We shall say that 
$A < H$ is
a \emph{one-dimensional} $k$-\emph{split} subgroup of $H$. 
Given an $S$-algebraic group $\mathbf{H}$, let ${H}^\vee$ be the smallest closed subgroup
of the locally compact group $H=\prod_{\nu\in S} \mathbf{H}_\nu(k_\nu)$ containing
all the one-dimensional connected split subgroups over the relevant local fields.
Given an $S$-algebraic group ${G}$, containing $H$,
let ${H}^\wedge$ be the closed normal subgroup of $G$ generated by ${H}^\vee$
(the dependence on $G$ is hidden in this notation).

Theorem~\ref{th:mau} will be deduced from the following three propositions
that we prove below.

\begin{prop}\label{pr:mau}
Let ${G}$ be an $S$-algebraic group and ${H}$ an $S$-algebraic subgroup of $G$.
Then ${H}^\wedge$ is the Mautner envelope of ${H}^\vee$ in $G$.
\end{prop}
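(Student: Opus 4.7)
The plan is to prove the two inclusions $\MT{H^\vee}\subset H^\wedge$ and $\MT{H^\vee}\supset H^\wedge$ separately. The first inclusion is immediate from Lemma~\ref{l:mau}(4): since $H^\wedge$ is, by construction, a closed normal subgroup of $G$ containing $H^\vee$, that lemma gives $\MT{H^\vee}\subset H^\wedge$.

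For the reverse inclusion, I would reduce to the case of a single one-dimensional split subgroup, as follows. By construction $H^\vee$ is generated by the one-dimensional split subgroups $A\subset H$, so the closed normal subgroup $H^\wedge$ of $G$ is generated by the conjugates $gAg^{-1}$ with $g\in G$ and $A$ one-dimensional split in $H$. Since $\MT{H^\vee}$ is itself closed (being an intersection of stabilizer subgroups), it suffices to show that every such conjugate lies in $\MT{H^\vee}$. Fix such $A$ and $g$. From $A\subset H^\vee$ one gets $\Hilbert^{\pi(H^\vee)}\subset\Hilbert^{\pi(A)}$ in every unitary $G$-representation $\pi$, hence $\MT{A}\subset\MT{H^\vee}$. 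So it is enough to prove that the closed normal subgroup $A^\wedge$ of $G$ generated by $A$ is contained in $\MT{A}$.

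The containment $A^\wedge\subset\MT{A}$ is the substantive step. The key observation is that since $A\subset\mathbf{G}_v(k_v)$ for a single $v\in S$, every $G$-conjugate of $A$ still lies inside $\mathbf{G}_v(k_v)$, so $A^\wedge$ coincides with the normal closure of $A$ in $\mathbf{G}_v(k_v)$. Restriction from $G$ to $\mathbf{G}_v(k_v)$ and trivial extension in the reverse direction identify the $A$-fixed vectors of unitary $G$-representations with those of unitary $\mathbf{G}_v(k_v)$-representations, so the question reduces to the following assertion inside a single algebraic group over a local field of characteristic zero: for any one-dimensional split subgroup $A$ of $\mathbf{G}_v(k_v)$, every $A$-fixed vector in any unitary $\mathbf{G}_v(k_v)$-representation is fixed by the normal closure of $A$. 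This is exactly the Moore--Mautner type statement established in \cite{MT} and \cite{Wa2}, combining the Mautner phenomenon for one-parameter split tori (together with the contracting/expanding unipotent subgroups they normalize) with Howe--Moore style vanishing applied to one-dimensional unipotent subgroups inside the semi-simple factors.

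Putting everything together, for every one-dimensional split $A\subset H$ and every $g\in G$ one has $gAg^{-1}\subset A^\wedge\subset\MT{A}\subset\MT{H^\vee}$; since $\MT{H^\vee}$ is closed, it contains the closed subgroup generated by all such conjugates, which is exactly $H^\wedge$. I expect the main obstacle to be the last step, namely extracting the single-factor Moore--Mautner statement in the precise form needed uniformly across archimedean and non-archimedean local fields $k_v$ of characteristic zero from the references; once that is in hand, the remainder of the argument is a bookkeeping reduction.
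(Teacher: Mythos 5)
Your overall structure matches the paper's: the easy inclusion $\MT{H^\vee}\subset H^\wedge$ comes from Lemma~\ref{l:mau}(4), and the hard inclusion reduces, exactly as you say, to showing $(A,A^\wedge)$ has the Mautner property for each one-dimensional split $A\subset H$; the observation that such an $A$ and all its $G$-conjugates live inside a single local factor $\mathbf{G}_v(k_v)$ is also correct and is implicitly what the paper does by working with $k$-algebraic objects. The final assembly step (pass from $(A,A^\wedge)$ Mautner to $(H^\vee,H^\wedge)$ Mautner, then conclude by Lemma~\ref{l:mau}(4)) is the same as the paper's.

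The gap, however, is precisely where you flag it, and it is real mathematical content rather than reference-extraction. The single-factor assertion that $A$-fixed vectors are fixed by the normal closure $A^\wedge$ of $A$ is \emph{not} something you can simply read out of \cite{MT} and \cite{Wa2}, and the mechanism you sketch (Howe--Moore vanishing inside semi-simple factors) is not the one used. The paper splits into two cases. For $\mathbf{A}\simeq\mathbf{G}_a$ it does cite \cite[Proposition~2.1]{MT}, which directly supplies a closed normal $N\supset A$ with $(A,N)$ Mautner. But for $\mathbf{A}\simeq\mathbf{G}_m$ the paper has to build such an $N$: it takes Wang's decomposition (from \cite{Wa1}, not \cite{Wa2}) of $G$ into contracting, expanding, and bounded subgroups $U^+,U^-,M$ relative to an element $g\in A$ of infinite order, with $W=\overline{\langle U^+,U^-\rangle}$ normal in $G$ and $(\overline{\langle g\rangle},W)$ Mautner, and then it proves a genuinely new claim: that $M$ commutes with $A$. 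That commutation is what forces $N=\overline{AW}$ to be normal (it is normalized by $U^+,U^-\subset W$ and, via commutation, by $M$, and $U^+,U^-,M$ generate $G$), hence $A^\wedge\subset N\subset\MT{A}$. Without it, one only knows $(A,W)$ is Mautner; $W$ need not contain the normal closure of $A$, since $A$ itself need not lie in $W$, and $\overline{AW}$ need not be normal. So the ``bookkeeping reduction'' you describe is all sound, but the remaining step is the actual content of the proposition in the $\mathbf{G}_m$ case, and it requires the $M$-commutes-with-$A$ argument rather than a bare citation.
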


\begin{prop}\label{l:rdc}
Let $G$ be an $S$-algebraic group.
Then ${G}^\vee$ is a Zariski-dense subgroup of finite index in $\Rdc(G)$.
\end{prop}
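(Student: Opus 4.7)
My plan is to reduce to a single local factor, use the structure theory of algebraic groups over a local field of characteristic zero to establish Zariski density, and then invoke the cited works \cite{MT, Wa2} for the finite-index assertion.

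First I would reduce to the one-factor case. Since $G=\prod_{v\in S}\mathbf{G}_v(k_v)$, and both the formation of $\Rdc$ and of $G^{\vee}$ respect the product decomposition, it suffices to treat $S=\{v\}$, so $G=\mathbf{G}(k)$ for a $k$-algebraic group $\mathbf{G}$. Every one-dimensional $k$-split subgroup has no compact $k$-algebraic quotient, so sits inside $\Rdc(\mathbf{G})$; hence $G^{\vee}\subseteq \Rdc(G)$, and we may replace $\mathbf{G}$ by $\Rdc(\mathbf{G})$ and assume that $\mathbf{G}$ has no non-trivial compact $k$-algebraic quotients. The goal then becomes: $G^{\vee}$ is Zariski-dense in $\mathbf{G}$ and of finite index in $G$.

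For the Zariski density, let $\mathbf{H}$ be the Zariski closure of $G^{\vee}$ in $\mathbf{G}$. Since conjugation by $\mathbf{G}$ permutes the collection of one-dimensional $k$-split subgroups, $\mathbf{H}$ is a normal $k$-algebraic subgroup of $\mathbf{G}$. The quotient $\mathbf{Q}=\mathbf{G}/\mathbf{H}$ then has no one-dimensional $k$-split subgroup, because any such subgroup would lift to a one-dimensional $k$-split subgroup of $\mathbf{G}$, already contained in $\mathbf{H}$. But a connected $k$-algebraic group containing no $\mathbf{G}_a$ or $\mathbf{G}_m$ subgroup has trivial unipotent radical and $k$-anisotropic reductive part, and over a local field of characteristic zero a $k$-anisotropic reductive group has compact group of $k$-points. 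Thus $\mathbf{Q}(k)$ would be a compact $k$-algebraic quotient of $G$, which our reduction excludes unless $\mathbf{Q}$ is trivial, giving $\mathbf{H}=\mathbf{G}$.

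For the finite index statement I would argue piece by piece using the structure of $\mathbf{G}(k)$ from \cite{MT, Wa2}. Decompose $\mathbf{G}$ up to isogeny as an almost-direct product of its unipotent radical $\mathbf{U}$, a $k$-split central torus $\mathbf{T}$, and a semisimple $k$-isotropic part $\mathbf{S}$ (the anisotropic parts are absent by the reduction). In characteristic zero $\mathbf{U}(k)$ is generated via exponentials by its $\mathbf{G}_a$-subgroups, so lies inside $G^{\vee}$; $\mathbf{T}(k)\cong (k^\times)^r$ is the product of its one-dimensional $\mathbf{G}_m$-factors, again inside $G^{\vee}$; and for $\mathbf{S}$, the subgroup $\mathbf{S}(k)^+$ generated by the $k$-points of the unipotent radicals of minimal $k$-parabolic subgroups has finite index in $\mathbf{S}(k)$ by Borel--Tits/Margulis--Tomanov, and is manifestly contained in $G^{\vee}$. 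Assembling these contributions, $G^{\vee}$ has finite index in $G$.

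The principal obstacle is the finite-index step: passing between $\mathbf{G}$ and its isogenous factors can introduce discrepancies between a group and its analytic identity component, and over a non-archimedean local field $\mathbf{G}(k)$ may differ from the group generated by unipotents by a non-trivial (but finite) amount. The precise statements from \cite{MT, Wa2} are tailored to bound these discrepancies and allow the piecewise argument above to be patched together cleanly.
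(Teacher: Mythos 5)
Your proposal follows essentially the same route as the paper: show $G^\vee\subset\Rdc(G)$, reduce to $G=\Rdc(G)$, then use the Levi decomposition $\mathbf{G}=\mathbf{S}\mathbf{T}\mathbf{U}$ to collect finite-index contributions from the unipotent radical, the split torus, and $\mathbf{S}^+$. The one structural difference is how Zariski density is handled. The paper observes that a discompact group has no finite-index $S$-algebraic subgroups, so once one proves the finite-index claim, Zariski density comes for free (the Zariski closure of $G^\vee$ would then be a finite-index algebraic subgroup of $\Rdc(\mathbf{G})$, hence all of it). You instead give a separate, direct argument via the quotient $\mathbf{Q}=\mathbf{G}/\mathbf{H}$. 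That is perfectly workable, but your step ``any one-dimensional $k$-split subgroup of $\mathbf{Q}$ would lift to one in $\mathbf{G}$'' is stated more confidently than it deserves: one does not lift a given $\mathbf{G}_a$ or $\mathbf{G}_m$ directly. What one should say is that $\mathbf{H}$ already contains the unipotent radical $\mathbf{U}$ of $\mathbf{G}$ and a maximal $k$-split torus (each being generated by one-dimensional split subgroups in characteristic zero), together with $\mathbf{S}^+$, so $\mathbf{Q}$ is a quotient of the $k$-anisotropic reductive part of $\mathbf{G}$ and hence has compact group of $k$-points, which forces $\mathbf{Q}=1$ under the assumption $G=\Rdc(G)$. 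The paper's route sidesteps this entirely and is the cleaner of the two.

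Also note that in the final assembly of the finite-index claim, passing from the algebraic equality $\mathbf{G}=\mathbf{S}\mathbf{T}\mathbf{U}$ to the statement that $S\cdot T\cdot U$ has finite index in $G$ at the level of $k$-points is not automatic: the paper invokes Proposition~\ref{p:alg1} (finiteness of $G$-orbits in $S$-points of a homogeneous variety) applied to the double-coset action of $S\times T\times U$ on $\mathbf{G}$. You flag this issue and defer it to \cite{MT,Wa2}, which is acceptable for a sketch, but be aware that the relevant tool is already Proposition~\ref{p:alg1} in the paper and no additional appeal to \cite{MT,Wa2} is needed at that point.
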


\begin{prop} \label{pr:alg}
Let ${G}$ be an $S$-algebraic group and ${H}$ an $S$-algebraic subgroup of $G$.
Then ${H}^\wedge$ is f.i.-algebraic and Zariski connected.
\end{prop}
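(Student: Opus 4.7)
The plan is to analyze the Zariski closure $\mathbf{L}$ of $H^\wedge$ in $\mathbf{G}$ (taken factor by factor), show that $\mathbf{L}$ is a Zariski-connected normal $S$-algebraic subgroup of $\mathbf{G}$, and then deduce that $H^\wedge$ has finite index in $\mathbf{L}$. These two facts together establish both conclusions of the proposition.

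For Zariski connectedness, recall that $H^\wedge$ is by definition the smallest closed normal subgroup of $G$ containing $H^\vee$, so it is topologically generated by $G$-conjugates of one-dimensional connected $k_v$-split subgroups of $H$. Each such generator is Zariski-connected and contains the identity, and by standard algebraic-group facts the Zariski closure of a subgroup generated by Zariski-connected subsets through a common point is itself Zariski-connected (finite products of such generators are images of products of irreducible varieties under multiplication, hence irreducible). Thus $\mathbf{L}$ is Zariski-connected. For normality in $\mathbf{G}$: the generating family is $G$-conjugation invariant by construction, and since conjugation is a regular action, the Zariski closure inherits this invariance; Zariski density of $G$ in $\mathbf{G}^{\circ}$ together with Proposition \ref{p:alg1} upgrades this to full $\mathbf{G}$-invariance.

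For the finite-index statement, apply Proposition \ref{l:rdc} to the Zariski-connected $S$-algebraic group $\mathbf{L}$: the group $\mathbf{L}^\vee$ is Zariski dense in $\Rdc(\mathbf{L})$ and has finite index in it. The generators of $H^\wedge$ are one-dimensional $k_v$-split subgroups of $\mathbf{L}$, so $H^\wedge\subseteq \mathbf{L}^\vee$; but $H^\wedge$ is Zariski dense in $\mathbf{L}$ by construction, and hence so is $\mathbf{L}^\vee$, which forces $\Rdc(\mathbf{L})=\mathbf{L}$ and thus $[\mathbf{L}:\mathbf{L}^\vee]<\infty$. To conclude $[\mathbf{L}^\vee:H^\wedge]<\infty$, I argue factor by factor: the $k_v$-Lie algebras of the one-dimensional $k_v$-split subgroups appearing in the generating family of $H^\wedge$ span $\operatorname{Lie}(\mathbf{L}_v)$, because they Zariski-generate $\mathbf{L}_v$ and we are in characteristic zero. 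By the local Lie correspondence used in \cite{MT,Wa2} (and underlying the proof of Proposition \ref{l:rdc}), the topologically closed subgroup of $\mathbf{L}_v(k_v)$ they generate is of finite index in $\mathbf{L}_v^\vee(k_v)$.

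The main obstacle is this last step, namely converting Zariski density of a family of one-dimensional split subgroups into an analytic finite-index statement about the topologically closed subgroup they generate. Over $k_v=\mathbb{R}$ this reduces to the classical observation that any analytically open subgroup contains the identity component, after which one counts components. Over non-archimedean $k_v$ the underlying topology is totally disconnected, so this step is genuinely delicate; it essentially reuses the machinery of Proposition \ref{l:rdc}, but with our restricted family of $G$-conjugates of split subgroups of $H$ in place of the full collection of one-dimensional $k_v$-split subgroups of $\mathbf{L}_v$.
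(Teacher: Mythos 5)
Your argument for Zariski connectedness is correct and coincides with the paper's: $H^\wedge$ is topologically generated by a family of Zariski-connected subgroups through the identity, so its Zariski closure is connected.

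There is, however, a genuine gap in the f.i.-algebraic half, and it is exactly where you flag difficulty yourself. You correctly reduce to showing that $H^\wedge$ has finite index in $L$, the $S$-points of its Zariski closure $\mathbf{L}$, and you split this as $[L:\mathbf{L}^\vee]<\infty$ (fine, via Proposition \ref{l:rdc} once you note $\Rdc(\mathbf{L})=\mathbf{L}$) followed by $[\mathbf{L}^\vee:H^\wedge]<\infty$. The second statement is the crux, and your justification --- ``by the local Lie correspondence used in \cite{MT,Wa2} \ldots{} the topologically closed subgroup \ldots{} is of finite index'' --- is not a proof. The fact that the Lie algebras of the generators generate $\operatorname{Lie}(\mathbf{L}_v)$ only yields that $H^\wedge$ is analytically \emph{open}; over a non-archimedean local field this does not give finite index (an open subgroup of $\mathbb{Q}_p^n$ can have infinite index, e.g.\ $\mathbb{Z}_p^n$). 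Neither \cite{MT}, nor \cite{Wa2}, nor Proposition \ref{l:rdc} supplies the missing step: Proposition \ref{l:rdc} treats the subgroup $G^\vee$ generated by \emph{all} one-dimensional split subgroups, and its proof leans on a Levi decomposition of the ambient group in a way specifically adapted to that situation --- it does not transfer to a restricted conjugation-invariant family ``with the same machinery,'' as you propose.

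The missing step is precisely the content of the paper's Theorem \ref{th:alg}, which asserts that the closed subgroup generated by a conjugation-invariant family of connected $k$-subgroups is f.i.-algebraic. Its proof is substantial: Lemma \ref{l:sol} handles solvable $\mathbf{G}$ by induction on the derived series via the commutator-with-a-fixed-element map $x\mapsto[n,x]$; Lemma \ref{l:ss} handles $\mathbf{G}$ with no nontrivial solvable quotients using \cite[Corollary 2.3.2(a)]{Margulis-book}; and the proof of Theorem \ref{th:alg} assembles these through a Levi decomposition. The paper then proves Proposition \ref{pr:alg} as a one-line corollary of Theorem \ref{th:alg} (plus the connectedness observation you also make). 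Your proposal correctly locates the difficulty and correctly reduces to it, but then defers rather than resolves it, so the argument is incomplete.
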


\begin{proof}[Proof of Theorem \ref{th:mau} (assuming Propositions \ref{pr:mau}, \ref{l:rdc}, \ref{pr:alg})]
	\hfill{}\\
Let $N={H}^\wedge$. By Proposition~\ref{pr:mau}, $N$ is contained in $\MT{H}$.
It is clearly normal, and also f.i.-algebraic and Zariski connected by Proposition~\ref{pr:alg}.
It follows that $N$ is contained in the f.i.-algebraic kernel $\Na\left(\MT{H}\right)$ of $\MT{H}$.
Hence,  in order to establish assertion (1) it suffices to show that $\MT{H}/N$ is compact.

Denote by $\rho:G\to G/N$ the factor map. By Lemma~\ref{l:mau}(3),
$\rho\left(\MT{H}\right)=\MT{\rho(H)}$, thus we need to establish the compactness of
$
	\MT{\rho(H)}.
$
Since $H^\vee<\ker(\rho)\cap H$ and $H/{H}^\vee$ is compact
by Proposition~\ref{l:rdc}, it follows that $\rho(H)$ is compact.
Now Lemma~\ref{l:mau}(2) implies that $\MT{\rho(H)}$ is compact, proving (1).

We now assume that $H$ has no compact $S$-algebraic quotients. 
By Proposition~\ref{l:rdc}, ${H}^\vee$ is Zariski dense in $H$.
Then the Zariski closure $\overline{N}$ of $N$ contains $H$.
By Proposition \ref{pr:alg}, $N$ is Zariski connected and of finite index in $\overline{N}$.
Clearly $\overline{N}$ is normal in $G$, so by
Lemma \ref{l:mau}(4), $\MT{H}$ is contained in $\overline{N}$.
Hence, we have the inclusions
\[
	N< \Na\left(\MT{H}\right)<\MT{H}<\overline{N}
\]
that imply claim (2).
\end{proof}

\begin{proof}[Proof of Proposition \ref{pr:mau}]
Let  $A=\mathbf{A}(k)$ be a one-dimensional $k$-split algebraic subgroup of ${G}$.
We first show that $(A,{A}^\wedge)$ has the Mautner property.
Note that we may assume without loss of generality that $G$ is Zariski connected.
We need to consider the cases when $\mathbf{A}$ is isomorphic over $k$ to the additive group ${\mathbf{ G}}_a$ and
the multiplicative group $\mathbf{G}_m$.

When $\mathbf{A}\simeq \mathbf{G}_a$, it was proved in \cite[Proposition~2.1]{MT}
that there exists a closed normal subgroup $N$ containing $A$
such that $(A,N)$ has Mautner property. This implies the claim in this case.

When $\mathbf{ A}\simeq \mathbf{G}_m$, we use the results
of S.~Wang from \cite{Wa1}.
Let $g$ be an element of infinite order in $A$.
We define the following subgroups of $G$:
\begin{align*}
U^+&=\{x\in G:\, g^nxg^{-n}\to e\hbox{ as $n\to\infty$}\},\\
U^-&=\{x\in G:\, g^{-n}xg^{n}\to e\hbox{ as $n\to\infty$}\},\\
M&=\{x\in G:\, g^{-n}xg^{n}\hbox{ is bounded for $n\in\mathbb{Z}$}\}.
\end{align*}
It was shown in \cite[Section~2]{Wa1} that:
\begin{enumerate}
	\item[(i)]
$U^+, \, U^-$ and $ M$ are the $k$-points of  Zariski connected $k$-algebraic subgroups.
\item[(ii)]
	${ U}^+$, ${ U}^-$ and ${ M}$ generate $G$.
	\item[(iii)]
	the closed subgroup $W=\overline{\langle U^+, U^-\rangle}$ is normal in $G$.
	\item[(iv)]
	The pair $(C,W)$ has the Mautner property, where $C=\overline{\langle g\rangle}$.
\end{enumerate}
We claim in addition, that
\begin{enumerate}
	\item[(iv)]
	$M$ commutes with $A$.
\end{enumerate}
For this we recall the construction of $M$ from \cite{Wa1}.
Let $\mathbf{S}$ be the maximal $k$-split torus in $\mathbf{G}$ containing $\mathbf{A}$,
and let $\Delta$ be a set of simple roots on $\mathbf{S}$ such that $|\alpha(g)|\le 1$ for $\alpha\in\Delta$,
where $|\cdot|$ denotes the absolute value of $k$.
Let $\Theta=\{\alpha\in\Delta:\, |\alpha(g)|=1\}$ and let $\mathbf{S}_\Theta$
be the connected component of the identity in $\bigcap_{\alpha\in\Theta} \ker(\alpha)$.
Then the subgroup $\mathbf{M}$ is precisely the centralizer of $\mathbf{S}_\Theta$ in $G$.
Every $\alpha\in \Theta$ defines a $k$-character of $\mathbf{A}$ such that $\alpha$
is bounded on $A=\mathbf{A}(k)$.
Hence, it follows that $\alpha(A)=1$ for every $\alpha\in \Theta$,
and $A\subset \mathbf{S}_\Theta(k)$. Therefore $M$ commutes with $A$, as claimed.

Let $N=\overline{AW},$ where $W$ is as in (iii).
Since $M$ commutes with $A$, it follows from (ii) that $N$ is normal in $G$.
By (iv), $(A,N)$ has the Mautner property.
Hence, $(A,{A}^\wedge)$ has the Mautner property as well. This proves our claim for the case $\mathbf{A} \simeq \mathbf{G}_m$.

To finish the proof of the Proposition, we recall that $H^\vee$ is the closed subgroup generated by the one dimensional split subgroups $A$ of $H$, and observe that
$H^\wedge$ is the closed subgroup generated by the corresponding groups $A^\wedge$.
Since the pairs $(A,A^\wedge)$ have the Mautner property,
it follows that the pair $(H^\vee,H^\wedge)$ has the Mautner property as well.
As ${H}^\wedge$ is normal in $G$, we conclude that it is the Mautner envelope of $H^\vee$
by Lemma~\ref{l:mau}(4).
\end{proof}

Given an $S$-algebraic group $G$, we introduce its {\it Lie algebra} $\hbox{Lie}({ G})$.
It is defined to be $\prod_{v\in S} \hbox{Lie}(\mathbf{G}_v)$ where $\hbox{Lie}(\mathbf{G}_v)$
is the Lie algebra of the $v$th local factor.
Since each group $\mathbf{G}_v$ is defined over $k_v$,
its Lie algebras has a  $k_v$-structure, and we consider $\hbox{Lie}(\mathbf{G}_v)$ as a Lie algebra over $k_v$.
The notion of the Lie algebra can be defined for any closed subgroup $H$
of $G=\prod_{v\in S} \mathbf{G}_v(k_v)$.
For this purpose we may assume without loss of generality that all the local fields in $S$
are incompatible (i.e., have different characteristics of the residue fields).
Then there exists an open normal subgroup of $H$ that splits
as a product of local factors (see e.g. \cite[Proposition~1.5]{r3}).
We define the Lie algebra of $H$ as the product of the Lie algebras of local factors.
The {\it exponential map} $\exp:\hbox{Lie}(H)\to H$ is the product of exponential maps
of the local factors,  and it defines a diffeomorphism in a neighborhood of zero.

\begin{proof}[Proof of Proposition~\ref{l:rdc}]
It suffices to prove the claim in case $\mathbf{G}$ is a connected algebraic group defined over a local
field $k$. Let $R=G^\vee$.
Consider the factor map $\pi: \mathbf{G}\to \mathbf{G}/\Rdc(\mathbf{G})$.
By Proposition \ref{p:alg2}, for a one-dimensional split group $A$, the group $\pi(A)$
is f.i.-algebraic and hence compact. Then $\ker(\pi)\cap A$ is infinite, and since $A$ is
one-dimensional and connected, $\ker(\pi)\cap A$ is Zariski dense in $A$. This implies that
$\pi(A)=1$. Hence, $R\subset \Rdc(G)$.

Since a discompact group has no finite index algebraic subgroups, it suffices to show that $R$ has finite index in $\Rdc(G)$.
We may assume, without loss of generality, that $G=\Rdc(G)$.
Since the field $k$ has characteristic zero, the group $\mathbf{G}$ has Levi decomposition
\[
	\mathbf{G}=\mathbf{S}\mathbf{T}\mathbf{U}
\]
where $\mathbf{S}$ is a connected semisimple subgroup defined over $k$, $\mathbf{T}$ is an algebraic torus
defined over $k$ that commutes with $\mathbf{S}$, and $\mathbf{U}$ is a normal unipotent subgroup defined over $k$.
Since $G=\Rdc(G)$, it follows from \cite[Theorem~3.1]{PR} that the reductive group $\mathbf{ST}$ has no
nontrivial $k$-anisotropic quotients.

In particular, this implies that the torus $\mathbf{T}$ is $k$-split.
Then $\mathbf{T}$ is isomorphic over $k$ to $\mathbf{G}_m^d$ and
$T\subset R$. Similarly, $U \subset R$; indeed, since $U$ is unipotent, the exponential map is a polynomial isomorphism $\hbox{Lie}(U)\to U$.
It follows that every element of $U$ is contained in one-dimensional
split unipotent subgroup of the form $\exp(tx)$, $x\in \hbox{Lie}(U)$.

Let $S^+$ denote the closed subgroup of $S$  generated by
unipotent split subgroups of $S$.
Since $S$ does not have any $k$-anisotropic quotients,
the subgroup $S^+$ has finite index in $S$ by \cite[\S7.2]{PR}.
It follows that $R\cap STU$ has finite index in $STU$.
Since $G$ is a homogeneous space with respect to the action of $S\times T\times U$ given
by $(s,t,u)\cdot g=stgu^{-1}$, it follows from Proposition \ref{p:alg1} that $G$ is a finite union
of double cosets of $(ST, U)$. Using that $U$ is normal, we conclude that
$STU$ is a finite index subgroup of $G$. This implies the claim.
\end{proof}

Proposition \ref{pr:alg} will be deduced from the following more general theorem.

\begin{thm}\label{th:alg}
Let $\mathbf{G}$ be an algebraic group defined over a local field $k$ and $\mathbf{A}_i$, $i\in I$, a family of
connected $k$-subgroups closed under conjugation by elements of $G$.
Then the closed subgroup $N$ of $G$ generated by $A_i$, $i\in I$, is f.i.-algebraic.
\end{thm}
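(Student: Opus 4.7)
The plan is to exhibit a closed algebraic $k$-subgroup $\mathbf{N}$ of $\mathbf{G}$ such that $N$ sits inside $\mathbf{N}(k)$ with finite index. First I would replace $\mathbf{G}$ by its identity component and thereby assume $\mathbf{G}$ is Zariski connected; this loses nothing since each $\mathbf{A}_i$ is connected and hence contained in $\mathbf{G}^\circ$. I would then invoke the classical result of Borel (Proposition~2.2 in his \emph{Linear Algebraic Groups}) on subgroups generated by families of irreducible subvarieties through the identity: the abstract subgroup of $\mathbf{G}$ generated by $\bigcup_i \mathbf{A}_i$ is a connected closed algebraic subgroup $\mathbf{N}$, and moreover there exist finitely many indices $i_1, \dots, i_n \in I$ with
\[
\mathbf{N} = \mathbf{A}_{i_1}\mathbf{A}_{i_2}\cdots\mathbf{A}_{i_n}
\]
as sets. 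Since each $\mathbf{A}_{i_j}$ is defined over $k$ and the presentation above is purely set-theoretic, $\mathbf{N}$ is itself defined over $k$.

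Next I would apply Proposition~\ref{p:alg1} to the $k$-algebraic action of $\mathbf{A}_{i_1}\times\cdots\times\mathbf{A}_{i_n}$ on $\mathbf{N}$ given by $(a_1,\dots,a_n)\cdot g = a_1\cdots a_n g$; this action is transitive because the orbit of the identity is $\mathbf{A}_{i_1}\cdots\mathbf{A}_{i_n}=\mathbf{N}$. Proposition~\ref{p:alg1} then tells us that $\mathbf{N}(k)$ decomposes in the analytic topology into finitely many orbits of $A_{i_1}\times\cdots\times A_{i_n}$, each one both open and closed, with the orbit of the identity being exactly the set-theoretic product $A_{i_1}A_{i_2}\cdots A_{i_n}\subset N$.

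To finish, observe that $N$ is a subgroup of $\mathbf{N}(k)$ containing the analytically open set $A_{i_1}\cdots A_{i_n}$; any subgroup of a topological group containing a nonempty open set is itself open, and open subgroups of locally compact groups are automatically closed. Since $\mathbf{N}(k)$ is already a finite union of analytic translates of $A_{i_1}\cdots A_{i_n}$, the index $[\mathbf{N}(k):N]$ is finite, and hence $N$ is an f.i.-algebraic subgroup of $G$, as claimed.

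The main technical point I foresee is verifying carefully that Borel's generation theorem, typically stated over an algebraically closed field, descends to give a $k$-algebraic $\mathbf{N}$ together with a finite product decomposition in terms of the given $k$-subgroups $\mathbf{A}_{i_j}$. Note that the conjugation-closure hypothesis on the family does not enter this argument at all; it is natural to retain it in the statement because the applications (e.g.\ to Proposition~\ref{pr:alg}, where $N$ plays the role of the normal closure $H^\wedge$) invariably use it to produce normality of $N$ in $G$.
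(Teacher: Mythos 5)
Your strategy of invoking Borel's generation theorem to produce a connected $k$-algebraic group $\mathbf{N} = \mathbf{A}_{i_1}\cdots\mathbf{A}_{i_n}$ and then arguing that $A_{i_1}\cdots A_{i_n}$ contains an analytically open subset of $\mathbf{N}(k)$ (so that $N$ is an open, hence also closed, subgroup) is sound, and this much coincides with the paper's opening step. But two genuine problems appear in how you try to finish.

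First, the map $(a_1,\dots,a_n)\cdot g = a_1\cdots a_n g$ is \emph{not} a group action of $\mathbf{A}_{i_1}\times\cdots\times\mathbf{A}_{i_n}$ on $\mathbf{N}$: one has
\[
(a_1,\dots,a_n)\cdot\bigl((b_1,\dots,b_n)\cdot g\bigr) = a_1\cdots a_n b_1\cdots b_n g,
\]
while $\bigl((a_1,\dots,a_n)(b_1,\dots,b_n)\bigr)\cdot g = a_1 b_1 a_2 b_2 \cdots a_n b_n g$, and these disagree whenever the factors fail to commute. Consequently Proposition~\ref{p:alg1} does not apply, and you cannot conclude that $\mathbf{N}(k)$ breaks into finitely many analytic translates of $A_{i_1}\cdots A_{i_n}$. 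What you \emph{can} cite is the submersion theorem for dominant $k$-morphisms in characteristic zero (this is what the paper uses, namely \cite[Theorem~2.5.3(2)]{Margulis-book}), which yields exactly the openness of $N$ and nothing more.

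Second, openness alone does not give finite index over a non-Archimedean local field: $\Z_p\subset\Q_p$ is an open and closed subgroup of infinite index in $\mathbb{G}_a(\Q_p)$. So the final step of your argument breaks down precisely in the non-Archimedean case. This is the point where the conjugation-closure hypothesis --- which you dismiss as not entering the argument --- is in fact indispensable. It is what guarantees that $N$ is \emph{normal} in $G$, and normality is used essentially in both lemmas the paper reduces to: Lemma~\ref{l:ss} assumes $N$ is an open \emph{normal} subgroup of a group with no solvable quotients, and Lemma~\ref{l:sol} assumes $N$ is normal in a finite-index subgroup of $G$. The paper's proof then combines these via the Levi-type decomposition $\mathbf{G}_0 \lhd \mathbf{G}$ with $\mathbf{G}_1 = \mathbf{G}/\mathbf{G}_0$: it proves $N\cap G_0$ has finite index in $G_0$ by Lemma~\ref{l:ss} and the image of $N$ in $G_1$ has finite index by Lemma~\ref{l:sol}. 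Your proposal bypasses this structure theory entirely and the result does not follow.

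To repair the argument you would need to (a) replace the Proposition~\ref{p:alg1} citation with the correct submersion theorem, and (b) retain and use the conjugation-closure hypothesis to obtain normality of $N$, and then pass through the solvable and no-solvable-quotient cases as in the paper.
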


\begin{proof}[Proof of Proposition~\ref{pr:alg} (assuming Theorem \ref{th:alg})]
Note that $H^\wedge$ is generated by
all the conjugates of the 
one-dimensional split $S$-algebraic subgroups of $H$. By Theorem~\ref{th:alg}, it is f.i.-algebraic, and it is Zariski connected since it is generated by Zariski connected subgroups.
\end{proof}


The rest of this section will be devoted to the proof of Theorem \ref{th:alg}.
Our first step toward the proof of Theorem \ref{th:alg} is the case of a solvable group.

\begin{lem}\label{l:sol}
Let $\mathbf{G}$ be a solvable algebraic group defined over a local field $k$ and $\mathbf{A}_i$, $i\in I$, a family of
connected $k$-algebraic subgroups. Suppose there is no proper normal algebraic subgroup of $\mathbf{G}$ containing all of the $\mathbf{A}_i$. 
For each $i$ let $H_i$ be a finite index subgroup of $A_i$.
Let $N$ be a closed subgroup of $G$ 
which is normal in a finite index subgroup of $G$ and 
which contains all of the $H_i$.
Then $N$ is of finite index in $G$. 
\end{lem}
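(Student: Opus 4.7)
The strategy is induction on $\dim \mathbf{G}$ after two preliminary reductions.

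First, I would reduce to the case that $\mathbf{G}$ is Zariski connected by replacing $\mathbf{G}$ with its identity component $\mathbf{G}^{\circ}$: this has finite index in $\mathbf{G}$, automatically contains each connected $\mathbf{A}_i$, and the hypotheses transfer (up to losing a finite index in $G$, $G_0$, and $N$). Second, I would show that $N$ is Zariski dense in $\mathbf{G}$. Its Zariski closure $\overline{N}$ contains each $\mathbf{A}_i$, because in characteristic zero $A_i=\mathbf{A}_i(k)$ is Zariski dense in the connected group $\mathbf{A}_i$, and a finite-index subgroup of a Zariski-dense subgroup of a connected $k$-group is still Zariski dense (a standard connectedness argument). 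Moreover, $\overline{N}$ is preserved by conjugation by the Zariski-dense subgroup $G_0$, hence is algebraically normal in $\mathbf{G}$. The hypothesis then forces $\overline{N}=\mathbf{G}$.

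For the base of the induction I take $\mathbf{G}$ commutative. The subgroup generated by the $\mathbf{A}_i$ is the image of the morphism $\prod_i \mathbf{A}_i \to \mathbf{G}$ and is therefore algebraic; it is automatically normal (as $\mathbf{G}$ is commutative) and contains the $\mathbf{A}_i$, so by hypothesis it is all of $\mathbf{G}$. Proposition~\ref{p:alg2} then says the image of $\prod_i A_i$ has finite index in $G$. Since $\prod_i H_i$ has finite index in $\prod_i A_i$ and its image lies in $N$, we conclude $[G:N]<\infty$.

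For the inductive step, let $\mathbf{Z}$ be a minimal nontrivial connected normal $k$-subgroup of $\mathbf{G}$; by solvability $\mathbf{Z}$ is commutative, and by minimality it is $\mathbf{G}$-irreducible. Set $Z_0=\mathbf{Z}(k)$, let $\pi\colon\mathbf{G}\to\bar{\mathbf{G}}=\mathbf{G}/\mathbf{Z}$, and push the data down to $\bar{\mathbf{G}}$. The hypotheses transfer (normal closures of preimages correspond), so by induction $\pi(N)$ has finite index in $\pi(G)$; combined with Proposition~\ref{p:alg2} this gives that $N\cdot Z_0$ has finite index in $G$. It then suffices to show $[Z_0:N\cap Z_0]<\infty$.

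The main obstacle is precisely this last step. The Zariski closure of $N\cap Z_0$ inside $\mathbf{Z}$ is stabilized by $G_0$ hence by $\mathbf{G}$, so by minimality of $\mathbf{Z}$ it is either $\{e\}$ or all of $\mathbf{Z}$. To exclude the trivial case and to upgrade Zariski density to finite index, the plan is to produce many elements of $N\cap Z_0$ as commutators $[g,h]\in N$ with $g\in G_0$ and $h\in H_i$ whose projections to $\bar{\mathbf{G}}$ vanish; these lie in $Z_0$ and, as the adjoint actions of $H_i$ on $\mathfrak{z}=\mathrm{Lie}(\mathbf{Z})$ together generate a $\mathbf{G}$-stable subspace of $\mathfrak{z}$, one exploits the $\mathbf{G}$-irreducibility of $\mathbf{Z}$. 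For the size upgrade I would use that $H_i$ contains the unipotent radical of $A_i$ (by divisibility of unipotent $k$-points in characteristic zero, any finite-index subgroup of $\mathbf{U}(k)$ is all of it), which makes the commutator computations surject onto an open, hence (by irreducibility of the $\mathbf{G}$-action on $\mathbf{Z}$ and a careful case analysis of closed $\mathbf{G}$-stable subgroups of $Z_0$, separately for archimedean and non-archimedean $k$) finite-index subgroup of $Z_0$. The non-archimedean case is where the real subtlety lies: closed Zariski-dense $\mathbf{G}$-invariant subgroups of $Z_0$ need not be of finite index (e.g.\ $p^n\mathcal{O}\subset k$), so one must use the precise structure of the action and the fact that the characters of $\mathbf{G}$ on $\mathbf{Z}$ cannot all preserve a proper compact-open subgroup.
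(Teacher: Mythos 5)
Your opening reductions are correct and match the paper: connectedness of $\mathbf{G}$, Zariski density of $N$, reduction to finite $I$ by dimension, and the abelian base case via the algebraic morphism $\prod_i \mathbf{A}_i\to\mathbf{G}$ and Proposition~\ref{p:alg2}. The inductive step, however, contains a genuine gap which you yourself flag: after quotienting by a minimal connected normal $k$-subgroup $\mathbf{Z}$ and reducing to $[Z_0:N\cap Z_0]<\infty$, you do not actually prove this. The concern you raise is real and is exactly why the step is delicate. Over a non-archimedean $k$ a closed $\mathbf{G}$-invariant subgroup of $Z_0$ can be Zariski dense yet of infinite index, so density arguments alone cannot close the case. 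Your proposed remedy (``produce many elements of $N\cap Z_0$ as commutators $[g,h]$ with $g\in G_0$, $h\in H_i$ whose projections to $\bar{\mathbf{G}}$ vanish'') is not worked out, and it is unclear which $h$ would make both constraints (lying in $N$ and projecting trivially) hold simultaneously; the appeal to ``the characters of $\mathbf{G}$ on $\mathbf{Z}$ cannot all preserve a proper compact-open subgroup'' is asserted, not established, and is not clearly true in the generality you need.

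The paper sidesteps this difficulty by choosing a different normal abelian subgroup to which the abelian case applies \emph{directly}, rather than quotienting first and then struggling to control the kernel. Let $\mathbf{D}$ be the last nontrivial term of the derived series of $\mathbf{G}$ (automatically abelian). If $\mathbf{D}$ is not central, pick $n\in N$ not centralizing $\mathbf{D}$ (possible since $N$ is Zariski dense) and consider $\varphi_n:\mathbf{D}\to\mathbf{D}$, $x\mapsto[n,x]$. Because $\mathbf{D}$ is abelian, $[n,xy]=[n,x][n,y]$, so $\varphi_n$ is a $k$-algebraic homomorphism. Here is the key point your plan lacks: for $x$ in the finite-index subgroup $N_D(N)$, the element $[n,x]=n\,(xnx^{-1})^{-1}$ lies in $N$ automatically, since $n\in N$ and $x$ normalizes $N$. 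Proposition~\ref{p:alg2} applied to the \emph{homomorphism} $\varphi_n$ (not to an arbitrary Zariski-dense subgroup) then gives that $\varphi_n(D)$ is of finite index in $L=\varphi_n(\mathbf{D})(k)$, hence $L\cap N$ is of finite index in $L$, and likewise for every conjugate $L^{m}$, $m\in N$. Letting $\mathbf{M}$ be the smallest $k$-algebraic subgroup containing all these $\mathbf{L}^m$, one gets a normal abelian subgroup to which the abelian case applies with the $L^m\cap N$ in place of the $H_i$, yielding $[M:M\cap N]<\infty$; one then applies induction to $\mathbf{G}/\mathbf{M}$. If $\mathbf{D}$ is central, one runs the same commutator-homomorphism device on the preceding term $\mathbf{E}$ of the derived series, using that $N\cap\mathcal{D}^i(\mathbf{G})$ is Zariski dense in $\mathcal{D}^i(\mathbf{G})$ to find $n\in N\cap E$ with $\varphi_n(E)$ nontrivial. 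In short, the paper never needs to show that the intersection of $N$ with a prescribed minimal normal subgroup is of finite index; instead it manufactures a normal subgroup tailored so that the finite-index intersection is an immediate consequence of Proposition~\ref{p:alg2}.
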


\begin{proof}
The group $\mathbf{G}$ must be connected, as it is generated by the connected groups $\mathbf{A}_i$ and their conjugates. The group $N$ is Zariski dense in $\mathbf{G}$ since it is normalized by a Zariski dense subgroup and its Zariski closure contains all of the $\mathbf{A}_i$. 
By dimension considerations we may assume that the collection $I$ is finite.
In case $\mathbf{G}$ is abelian the lemma follows from Proposition~\ref{p:alg2}, by considering the homomorphism
$\prod_I \mathbf{A}_i\to \mathbf{G}$. 

For the general case, we will proceed by induction on $\dim \mathbf{G}$. We will show that $\mathbf{G}$ contains a normal algebraic subgroup $\mathbf{M}$ such that $M \cap N$ is of finite index in $M$; from this the statement will follow by applying the induction hypothesis to $\mathbf{G}/\mathbf{M}$.  We will use the notation of \cite{Borel-book}, and let $\mathcal{D}^i(\mathbf{G})$ denote the subgroups in the derived series of $\mathbf{G}$. We will focus on the last nontrivial group $\mathbf{D}$ in this series, and distinguish two cases, according as $\mathbf{D}$ is or is not central in $\mathbf{G}$. 

If $\mathbf{D}$ is not central in $\mathbf{G}$, since $N$ is Zariski dense in $\mathbf{G}$, there is $n \in N$ which does not centralize $\mathbf{D}$. Consider the map $\varphi = \varphi_n: \mathbf{D} \to \mathbf{D}$ defined by $x \mapsto [n,x]=nxn^{-1}x^{-1}$. 
Using the fact that $\mathbf{D}$ is abelian we obtain the  formula
\begin{equation}\label{eq: formula}
[n,xy]=[n,x][n,y]^x=[n,x][n,y], 
\end{equation}
i.e., $\varphi$ is a $k$-algebraic homomorphism. Denote the Zariski closure of the image by $\mathbf{L}$. Applying Proposition \ref{p:alg2} we find that $\varphi(D)$ is of finite index in $L$. On the other hand, whenever $x \in N_G(N)$, $\varphi(x) \in N$. Since $N_G(N)$ is of finite index in $G$, we also have that $N_D(N)$ is of finite index in $D$. This implies that $L \cap N$ is of finite index in $L$, and hence for any $n \in N$, the conjugate $L^n$ also satisfies that $L^n \cap N$ is of finite index in $L^n$. 
Let $\mathbf{M}$ be the smallest $k$-algebraic group containing all the conjugates $\mathbf{L}^{n}$, for $n \in N$. Each $L^n$ is contained in $\mathbf{D}$ since $\mathbf{D}$ is normal in $\mathbf{G}$. Thus  $\mathbf{M}$ is contained in $\mathbf{D}$, hence abelian. Since $N$ is Zariski dense, $\mathbf{M}$ is a normal subgroup of $\mathbf{G}$. Since the theorem is already proved for abelian groups, we now apply it to the groups $\mathbf{M}$ (in place of $\mathbf{G}$) and $L^{n}$ (in place of $H_i$) to obtain that $M \cap N$ is of finite index in $M$, completing the proof in this case.


If $\mathbf{D}$ is central in $\mathbf{G}$, we let $\mathbf{E}$ be the preceding term in the derived  series, i.e. $\mathbf{D} = (\mathbf{E},\mathbf{E})$. For each $n \in N$ we can define $\varphi_n$ as in the previous case. Using \cite[Prop. 2.2]{Borel-book}, an induction shows that for each $i$, $N \cap \mathcal{D}^i(\mathbf{G})$ is Zariski-dense in $\mathcal{D}^i(\mathbf{G})$. In particular 
we can find $n \in N \cap E$ such that $\varphi_n(E)$ is non-trivial. We consider $\varphi = \varphi_n $ as a $k$-algebraic map $\mathbf{E} \to \mathbf{D}$. Since $\mathbf{D}$ is central in $\mathbf{G}$, (\ref{eq: formula}) shows that $\varphi$ is a homomorphism in this case as well. Now the same argument can be repeated. 
\end{proof}

\begin{lem}\label{l:ss}
Let $\mathbf{G}$ be a a connected $k$-algebraic group with no solvable non-trivial quotients.
Let $N$ be an open normal subgroup of $G$.
Then $N$ is of finite index in $G$.
\end{lem}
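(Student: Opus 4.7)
The plan is to handle the archimedean and non-archimedean cases separately. If $k\in\{\mathbb{R},\mathbb{C}\}$, then $G$ is a Lie group with finitely many topological components, so the identity component $G^{\circ}$ has finite index in $G$ and is contained in every open subgroup; hence $[G:N]\le[G:G^{\circ}]<\infty$, with no use of the hypothesis. For non-archimedean $k$ (still characteristic $0$), I would use the Levi decomposition $\mathbf{G}=\mathbf{L}\ltimes\mathbf{U}$, where $\mathbf{L}$ is reductive and $\mathbf{U}$ is the unipotent radical. A commutator computation in the semidirect product shows that the hypothesis that $\mathbf{G}$ has no nontrivial solvable quotient---equivalently, that $\mathbf{G}$ is perfect---forces $\mathbf{L}$ to be semisimple and $(\mathbf{U}/[\mathbf{U},\mathbf{U}])^{\mathbf{L}}=0$.

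I would then split the proof into two parts: (i) $U\subseteq N$ and (ii) $[G:NU]<\infty$. For (ii), $G/U$ embeds as a finite-index subgroup of $L=\mathbf{L}(k)$ (Proposition~\ref{p:alg2}), and $NU/U$ is open normal there. For $\mathbf{L}$ connected semisimple, any open normal subgroup of $L$ has finite index, since each simple factor is either compact (anisotropic case) or has simple quotient $L_{i}/Z(L_{i})$ modulo center (isotropic case, by Platonov).

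The bulk of the work is (i), proved by induction on the nilpotency class $c$ of $\mathbf{U}$. In the base case $c=1$ (so $\mathbf{U}=\mathbf{V}$ is a vector group with $\mathbf{V}^{\mathbf{L}}=0$), openness of $N$ gives $(l,0)\in N$ for $l$ in a small neighborhood $U_{e}$ of the identity in $L$, and normality in $G$ together with $(e,v)(l,0)(e,v)^{-1}=(l,(1-l)v)$ then yields $(e,(1-l)v)\in N$ for all $v\in V$. Thus $N\cap V$ contains the $k$-subspace $\sum_{l\in U_{e}}(1-l)(V)$; writing $l=\exp(tx)$ with $t\in k$ small and $x\in\mathfrak{l}$ identifies this with $\mathfrak{l}\cdot V$, which equals $V$ because $V/\mathfrak{l}\cdot V$ has trivial $\mathfrak{l}$-action and $V^{\mathfrak{l}}=V^{\mathbf{L}}=0$. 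For the inductive step, let $\mathbf{U}^{(c-1)}$ be the last nontrivial term of the descending central series of $\mathbf{U}$ (it is central in $\mathbf{U}$, $\mathbf{L}$-invariant, abelian, and satisfies $(\mathbf{U}/\mathbf{U}^{(c-1)})^{\mathrm{ab}}=\mathbf{U}^{\mathrm{ab}}$). Applying the inductive hypothesis to $\mathbf{G}/\mathbf{U}^{(c-1)}$ gives $U\subseteq N\cdot U^{(c-1)}$. Decomposing each $u_{i}\in U$ as $n_{i}z_{i}$ with $z_{i}\in U^{(c-1)}$ central in $U$, commutator identities show that iterated commutators $[u_{1},[u_{2},\dots,[u_{c-1},u_{c}]\dots]]$ generating $U^{(c-1)}$ collapse to the corresponding iterated commutators of the $n_{i}\in N$, which lie in $N$; hence $U^{(c-1)}\subseteq N$.

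\textbf{Main obstacle.} The crux is the base case. When $\mathbf{L}$ has anisotropic factors (so parts of $L$ are compact), there do exist proper $L$-invariant open subgroups of $V$ --- for instance balls $\{v:\|v\|\le r\}$ in a compact-group-invariant norm --- so one cannot merely exploit the $L$-invariance of $N\cap V$. The identity $(e,v)(l,0)(e,v)^{-1}=(l,(1-l)v)$ is exactly where the normality of $N$ in the full group $G$ is used to extract the additional control needed to force $\mathfrak{l}\cdot V\subseteq N\cap V$, and it is precisely this extra input that makes the conclusion go through.
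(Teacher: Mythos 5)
Your proof is correct, and it takes a genuinely different route from the paper's. After fixing a $k$-rational Levi decomposition $\mathbf{G}=\mathbf{S}\ltimes\mathbf{U}$, the paper's proof invokes Margulis's Corollary~2.3.2(a) to show $N$ contains a finite-index subgroup of $S$, and then runs an induction on $\dim\mathbf{U}$: at each step it finds a unipotent $\mathbf{V}<\mathbf{S}$ not normalized by $\mathbf{U}$, forms $\mathbf{W}=\langle\mathbf{V},\mathbf{V}^u\rangle$, and appeals to the solvable Lemma~\ref{l:sol} twice (once for $\mathbf{W}$, once for the normal closure of $(\mathbf{W}\cap\mathbf{U})^{\circ}$) before passing to a quotient. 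You avoid Lemma~\ref{l:sol} entirely, splitting the goal into $U\subset N$ and finiteness of $[L:\overline{N}]$. Your part (ii) is essentially the same semisimple ingredient the paper cites from Margulis (Kneser--Tits/Platonov); the only imprecision is that $L_i/Z(L_i)$ need not be simple when $\mathbf{L}_i$ is isotropic but not simply connected (e.g.\ $\mathrm{PGL}_2(\mathbb{Q}_p)$), though it does contain the simple image of the simply connected cover with finite index, so the conclusion is right. Your part (i), via complete reducibility and the identity $\mathfrak{l}\cdot V=V$ forced by $V^{\mathbf{L}}=0$, together with the induction on nilpotency class and the central-element collapse of iterated commutators, is a clean and more elementary replacement for the paper's machinery. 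One step worth making explicit: the inductive step requires that the abstract bottom lower-central term $\gamma_c(U)$ equals the full group $\mathbf{U}^{(c-1)}(k)$, not merely a Zariski-dense subgroup. This holds in characteristic zero because $[u_1^{t},w]=[u_1,w]^{t}$ modulo $\gamma_{c+1}=1$, so $\gamma_c(U)$ is a $k$-subspace of the vector group $\mathbf{U}^{(c-1)}(k)$ and Zariski density then forces equality; with that sentence added your argument is complete.
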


\begin{proof}
We denote the unipotent radical of $\mathbf{G}$ by $\mathbf{U}$, and let  $\mathbf{G}=\mathbf{S}\ltimes\mathbf{U}$ be a Levi decomposition.
The subgroup $\mathbf{S}$ is semi-simple (otherwise there is an abelian quotient).
By passing to a covering group, we may and will assume that $\mathbf{S}$ is simply connected.
Since $N$ is open in $G$,
it is enough to prove that $N$ is cocompact in $G$.
By \cite[Corollary 2.3.2(a)]{Margulis-book},
$N$ contains the group $S'$ where $\mathbf{S}'<\mathbf{S}$ is the subgroup
consisting of the product of all $k$-isotropic factors. Since $S'U$ is cocompact in $G$, there is no loss of generality in assuming that $\mathbf{S}'=\mathbf{S}$
and hence $S<N$. In case
$\mathbf{U}$ is trivial the proof is now complete.
For the general case, we proceed by induction on the dimension of $\mathbf{U}$.

Since $\mathbf{S}'$ is generated by unipotent subgroups and is not normal in $\mathbf{G}$
(otherwise there will be a solvable quotient),
there exists a connected unipotent subgroup $\mathbf{V}<\mathbf{S}$ which is not normalized by $\mathbf{U}$.
Since $U$ is Zariski dense in $\mathbf{U}$, we can find $u\in U$ which does not normalize $\mathbf{V}$.
Consider the group $\mathbf{W}$ generated by $\mathbf{V}$ and $\mathbf{V}^u$.
It is a subgroup of $\mathbf{V}\mathbf{U}$, hence unipotent.
$V$ and $V^u$ are contained in $N$, hence by Lemma~\ref{l:sol},
$N$ contains 
a finite index subgroup of $W$.
Denote by $\mathbf{H}_1$ the connected component of the identity of $\mathbf{W}\cap\mathbf{U}$.
This is a connected unipotent subgroup of positive dimension in $\mathbf{U}$.
$N$ contains a finite index subgroup of $H_1$.
Consider all the conjugations of $H_1$ by elements of $N$.
These conjugates generate a subgroup $\mathbf{H}$ which is normal in $\mathbf{G}$.
$N$ contains a finite index subgroup of $\mathbf{H}$ by another application of Lemma~\ref{l:sol}.
The proof now follows by applying the induction hypothesis to the group $\mathbf{G}/\mathbf{H}$.
\end{proof}

\begin{proof}[{Proof of Theorem~\ref{th:alg}}]
Replacing $\mathbf{G}$ by the algebraic subgroups generated by the groups $\mathbf{A}_i$, we need to prove that $N$ is of finite index in $G$.
The group $N$ is clearly normal in $G$ and Zariski dense.
The group $\mathbf{G}$ must be Zariski connected, as it is generated by the connected groups $\mathbf{A}_i$.
By dimension considerations we can replace $I$ by a finite subset, so that $\mathbf{G}$ is still generated by the $\mathbf{A}_i$. 
By \cite[Proposition (2.2)]{Borel-book} and \cite[Theorem (2.5.3)(2)]{Margulis-book},
$N<G$ is open.
Let $\mathbf{G}_0$ be the smallest normal subgroup of $\mathbf{G}$ containing all the semisimple subgroups of $\mathbf{G}$, and let $\mathbf{G}_1 = \mathbf{G}/\mathbf{G}_0$. Then $N_0 = N \cap G_0$ is open in $G_0$, so of finite index by Lemma~\ref{l:ss}. By Lemma~\ref{l:sol}, the image of $N$ in $G_1$ is of finite index. This implies that $N$ is of finite index in $G$. 
%
%
\end{proof}


\section{The relative Borel density theorem}\label{s:rel}

\subsection{Relative Borel density}
In this section we will state and prove the relative Borel density theorem (Theorem \ref{th:admiss}).

\begin{Def} \label{d:fat}
Let $G$ be an $S$-algebraic group and let $L$ be a closed subgroup.
A \emph{fat complement for  $L$ in  $G$}
is a subgroup $M<G$ satisfying the following three conditions:
\begin{itemize}
\item[(F1)] $L\cdot M=G$,
\item[(F2)] the Zariski closure of $M$ contains the discompact radical $\Rdc(G)$,
\item[(F3)] $\Na(M)\cap L$ is cocompact in $L$.
\end{itemize}
\end{Def}

\begin{thm}[Relative Borel density] \label{th:admiss}\hfill{}\\
Let $G$ be an $S$-algebraic group and $H$ an $S$-algebraic subgroup of $G$.
Let $(X,\xi)$ be a $G$-space with an invariant probability measure which is ergodic for the action of
$H$.
Then there exists a closed subgroup $M$ of $G$ which is cocompact, has finite covolume, and is a fat complement for $H$ in $G$, and a measure-preserving $G$-map $\pi:X\to G/M$, such that the following holds:

For every $S$-algebraic $G$-space $V$, for every $n \geq 0$,
and for every $H$-map $i:X\to \Prob^n(V)$, there exists an
$H$-map $j:G/M\to V$ such that $i=j\circ\pi$ almost everywhere.
That is, we have the following diagram:
\[
	\xymatrix{ X \ar[r]^{i} \ar[d]^\pi & \Prob^n(V) \\ G/M  \ar@{.>}[ur]^{j} & }
\]

\end{thm}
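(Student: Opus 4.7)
The strategy is to construct $M$ via an ergodic decomposition of $(X,\xi)$ along a canonical normal algebraic subgroup arising from the Mautner envelope of $H$, and then to push any $H$-map $i:X\to\Prob^n(V)$ through the resulting projection by combining Mautner's property with Borel density. Set $N\triangleleft G$ to be the f.i.-algebraic kernel $\Na(\MT{H})$ of the Mautner envelope (refined, if necessary, so that $N=\Rdc(N)$). By Theorem~\ref{th:mau}, $N$ is Zariski connected, f.i.-algebraic, and normal in $G$, with $\MT{H}/N$ compact. Ergodic decomposition of $\xi$ along $N$ (see the appendix) yields a $G$-equivariant factor $\pi:(X,\xi)\to(T,\xi_T)$ whose fibers are the $N$-ergodic components. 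Mautner applied to $L^2(X,\xi)$ upgrades $H$-ergodicity of $\xi$ to $\MT{H}$-ergodicity; since $\MT{H}/N$ is compact, it acts transitively on a conull part of $T$, which I identify with $G/M$, where $M$ is the $G$-stabilizer of a generic point.

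To verify the required properties of $M$: cocompactness follows from $G/M=T$ being a quotient of the compact group $\MT{H}/N$, and $\xi_T$ then makes $M$ of finite covolume. Property (F2) of Definition~\ref{d:fat} is Corollary~\ref{c2:bdt} applied to $M<G$. Property (F3) holds because $N\subset\Na(M)$ (since $N$ is Zariski connected, f.i.-algebraic, and acts trivially on $T$), combined with the inclusion $H/(H\cap N)\hookrightarrow \MT{H}/N$ into a compact group (closedness of the image being handled by the algebraic structure of $H$ and $N$ inside $\MT{H}$). Property (F1), $HM=G$, follows from the $H$-ergodicity of $\xi_T$ on $G/M$ inherited from $\xi$: a conull $H$-orbit in the $S$-algebraic space $G/M$ must equal all of $G/M$.

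Finally, to factor $i$ through $\pi$, I would argue that $i$ is $N$-invariant $\xi$-a.e.; as $N$-invariant measurable functions are constant on $N$-ergodic components, $i$ then descends to a map $j:G/M\to\Prob^n(V)$ with $i=j\circ\pi$, inheriting $H$-equivariance. For $N$-invariance, first promote $H$-equivariance of $i$ to $\MT{H}$-equivariance via Mautner applied to the Koopman representations on $L^2(X,\xi)$ obtained by pulling back bounded Borel test functions on $\Prob^n(V)$; hence $i$ is $N$-equivariant. Then apply Borel density (Corollary~\ref{c:bdt}) to the $N$-equivariant $i$ on $(X,\xi)$, concluding $i(x)\in\Prob^n(V^{\Rdc(N)})=\Prob^n(V^N)$ a.e., using $N=\Rdc(N)$. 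Combining, $i(nx)=n\cdot i(x)=i(x)$ for $n\in N$. The main obstacle is precisely this promotion of $H$-equivariance of $i$ to $\MT{H}$-equivariance for measurable maps into the algebraic $G$-space $\Prob^n(V)$: Mautner is formulated in terms of unitary representations, and transferring it to equivariant maps into nonlinear algebraic targets requires a careful deployment of the $L^2$-framework via test functions, coordinated with Borel density to control the image support.
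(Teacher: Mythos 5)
Your overall architecture is close to the paper's, but the order in which you invoke Mautner and Borel density is reversed, and that reversal creates the gap you yourself flag at the end. As you note, Mautner's property is a statement about unitary representations and hence about $H$-\emph{invariant} vectors; it does not by itself upgrade $H$-\emph{equivariance} of a map $i:X\to\Prob^n(V)$ into a Borel $G$-space to $\MT{H}$-equivariance. Pulling back a bounded Borel test function $f$ on $\Prob^n(V)$ along $i$ gives $f\circ i\in L^\infty(X)$, but this composite is $H$-invariant only when $f$ is already $H$-invariant on the target, which is not the generic case. The attempted "promotion of equivariance" therefore has no mechanism behind it, and the rest of your factoring argument, which relies on $N$-equivariance of $i$ for $N=\Na(\MT{H})$, a group much larger than $H$, does not get off the ground.

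The paper avoids this by inverting the logical order and by working with $R=\Rdc(H)$ rather than with the Mautner envelope of $H$ itself. Since $R<H$, the map $i$ is automatically $R$-equivariant, and Borel density (Corollary~\ref{c:bdt}) applied to the discompact group $R$ shows that $i(x)\in\Prob^n(V^R)$ almost everywhere. Because $R$ acts trivially on $V^R$, $R$-equivariance of $i$ plus $R$-fixedness of its values yields genuine $R$-\emph{invariance}: $i(rx)=r\cdot i(x)=i(x)$. This converts the problem from equivariant maps to invariant sets, and only now is Mautner brought in, through Corollary~\ref{c:ec}, which says $X\ec R=X\ec\MT{R}$ as Boolean $\sigma$-algebras; this is the correct setting for the Mautner property, since it concerns $L^2(X,\xi)$ directly. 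Taking $N=\Na(\MT{R})<\MT{R}$, one then has that $i$ factors through $X\ec N$, which is a $G$-space because $N\triangleleft G$, and the rest of your argument (identifying $X\ec N$ with $G/M$ via ergodicity of the compact group $H/(H\cap N)$, and verifying (F1)--(F3)) is essentially correct and runs parallel to the paper's. Notice also that your use of Theorem~\ref{th:mau}(2) needs the group fed into it to have no compact $S$-algebraic quotients; that is automatic for $R=\Rdc(H)$ but not for $H$, which is another reason the paper works with $\MT{R}$ rather than $\MT{H}$. Your hedge "refined, if necessary, so that $N=\Rdc(N)$" is addressing a real point but leaves unexamined whether the refined $N$ is still f.i.-algebraic and Zariski connected in the sense needed for Proposition~\ref{p:erg_comp}(2); the paper sidesteps this as well by the choice of $R$ from the start.

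In short: swap the roles of Mautner and Borel density. Apply Borel density first, to $R=\Rdc(H)$, to convert equivariance into invariance; then invoke Mautner at the level of invariant sets (Corollary~\ref{c:ec}); and take $N=\Na(\MT{R})$ rather than $\Na(\MT{H})$. With those changes your outline aligns with the paper's proof.
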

 Note that by property (F3), if $H$ has no compact algebraic factors, then $i$ is constant a.e.
We will see  below (Corollary \ref{cor: using fat complement}) that if $G$ has no compact $S$-algebraic factors then $i$ has an essentially  finite image.

\begin{proof}
Let $R=\Rdc(H)$. By Corollary \ref{c:bdt}, the image of $i$ is contained $\Prob^n\left(V^R\right)$. Let
$X \ec R$ denote the space of ergodic components for the action of $R$ on $(X, \xi)$, as described  in Appendix \ref{sec:measure_class_preserving_actions}.
Applying Proposition \ref{p:erg_comp}(1), we find that $i$ factors through $
X\ec R
.$ Since $\xi$ is $G$-invariant and finite, Corollary \ref{c:ec} implies that $X \ec R$ is canonically identified with $X \ec \MT{R}.$ Let $N=\Na\left(\MT{R}\right)$ be the f.i.-algebraic kernel of the Mautner envelope of $R$, see Definition \ref{def:kernel}. It follows that $i$ factors through $X \ec N$ which, by Proposition~\ref{p:erg_comp}(2), is a $G$-space
on which $N$ acts trivially. Moreover the factor map $P_N:X \to X\ec N$ is a $G$-map.

Since $H$ acts ergodically on $X$, $H/H\cap N$ acts ergodically on $X\ec N$.
By Theorem~\ref{th:mau}(2), $N$ is of finite index in the Mautner envelope of $R$.
It follows that $N\cap R$ is of finite index in $R$.
Since $H/R$ is compact, $H/(H\cap N)$ is compact as well.
Because of ergodicity, the measure $(P_N)_*\xi$ is supported on a single orbit of $H$.
In particular, it follows that $X\ec N$ is isomorphic as a $G$-space
to $G/M$, where $M$ is a closed cocompact subgroup of $G$ of finite covolume, with $N<M$.

We denote by $\pi:X\to G/M$ the map corresponding to $P_N: X\to X\ec N$.
Then the existence of a Borel map $j:G/M\to Y$ such that $i=j\circ\pi$
follows from Proposition \ref{p:erg_comp}(1). Since $i$ and $\pi$ are $H$-maps, $j$ is an $H$-map as well.

It remains to show that $M$ is a fat complement for $H$ in $G$.
Since $H/(H\cap N)$ is compact and acts ergodically on $G/M$, we
conclude that $H$ acts transitively on $G/M$ and (F1) follows.
To prove (F2), we observe that $G/M$ supports a $G$-invariant measure $\pi(\xi)$.
Hence, it follows from Corollary~\ref{c2:bdt} that
$\Rdc(G)$ is contained in the Zariski closure of $M$.
Since $N$ acts trivially on $G/M$, we have $N<\Na(M)$.
This implies (F3), because $H\cap N$ is cocompact in $H$.
\end{proof}







\subsection{Fat complements}

In this subsection we discuss fat complements. Our goal is the following proposition, which will provide more information on the conclusion of Theorem \ref{th:admiss}.

\begin{prop}\label{cor: using fat complement}
Let $G$, $H$, and $i$ be as in Theorem \ref{th:admiss}. If $G$ has no
compact $S$-algebraic factors then $i$ has finite image.

\end{prop}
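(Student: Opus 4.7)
My plan is to reduce the finite-image claim to the finiteness of $G/M$. From Theorem~\ref{th:admiss}, $i=j\circ\pi$ with $j\colon G/M\to\Prob^n(V)$ an $H$-map, so the image of $i$ coincides with the image of $j$. Property (F1) gives $HM=G$, so $H$ acts transitively on $G/M$, whence $\mathrm{image}(j)=H\cdot\mu_0$ for $\mu_0:=j(eM)$. Since $H\cap M\subseteq\mathrm{Stab}_H(\mu_0)$ and $G/M\cong H/(H\cap M)$ as $H$-spaces, the proposition follows once one shows that $M$ has finite index in $G$.

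To prove finiteness of $G/M$, I would pass to the $S$-algebraic quotient $\bar G:=G/\overline{\Na(M)}$, where $\overline{\Na(M)}$ denotes the Zariski closure of the f.i.-algebraic kernel. Since $\Na(M)$ has finite index in $\overline{\Na(M)}$, finiteness of $G/M$ is equivalent to finiteness of $\bar G/\bar M$, with $\bar H,\bar M$ denoting the images of $H,M$. The hypothesis $\Rdc(G)=G$ descends to $\Rdc(\bar G)=\bar G$. Properties (F1)--(F3) translate to: $\bar H\bar M=\bar G$; $\bar H$ is compact $S$-algebraic (since $H/(H\cap\Na(M))$ is compact by (F3) and $\overline{\Na(M)}/\Na(M)$ is finite); and $\bar M$ is Zariski dense in $\bar G$ (by (F2) together with $\Rdc(\bar G)=\bar G$).

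The heart of the proof is to show that $\bar M$ has finite index in $\bar G$. By Proposition~\ref{l:rdc} applied to $\bar G$, the subgroup $\bar G^\vee$ generated by the one-dimensional $k_v$-split subgroups has finite index in $\bar G$, so it suffices to show that $\bar M$ contains a finite-index subgroup of $\bar G^\vee$. For each one-dimensional $k_v$-split subgroup $A\subset\bar G$, the image of $A$ under the projection to the compact space $\bar G/\bar M\cong\bar H/(\bar H\cap\bar M)$ is bounded; since $A$ is non-compact with no non-trivial compact $S$-algebraic quotient, I would argue that $A\cap\bar M$ must have finite index in $A$, exploiting the $S$-algebraic structure of $\bar H$ together with the Zariski density of $\bar M$. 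Applying Theorem~\ref{th:alg} to the family of all conjugates of such $A$'s (closed under $\bar G$-conjugation), the closed subgroup they generate is f.i.-algebraic and contained in $\bar M$, forcing $[\bar G:\bar M]<\infty$.

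The principal obstacle is the last step: arguing that a one-dimensional split $A$ with bounded image in the compact $\bar H$-space $\bar G/\bar M$ must meet $\bar M$ in a subgroup of finite index rather than merely a cocompact one. In the additive case $A\cong\mathbb{G}_a(k_v)$ with $k_v=\R$, for instance, one must rule out discrete cocompact subgroups such as $\Z<\R$; this should use that $\bar H$ is $S$-algebraic (rather than merely closed) and that $\bar G$ is discompact, so that such ``torus-like'' images are incompatible with the algebraic structure. This is the delicate step where the full strength of the hypothesis that $G$ has no compact $S$-algebraic factors enters.
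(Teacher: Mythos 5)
Your first paragraph is a correct reduction of the proposition to the statement that $M$ has finite index in $G$, which is exactly Theorem~\ref{t:M}; this matches the paper, which states that the proposition is an immediate consequence of that theorem. The setup of your second paragraph (passing to $\bar G=G/\overline{\Na(M)}$, noting that $\Rdc(\bar G)=\bar G$, that $\bar H$ is compact and f.i.-algebraic, and that $\bar M$ is Zariski dense) is also sound.

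The third paragraph, however, contains a genuine gap, and it is not merely delicate but circular. The observation that $A$ has ``bounded image'' in $\bar G/\bar M$ is vacuous---every subset of a compact space is bounded---and gives no control whatsoever on $A\cap\bar M$. In the concrete test case $\bar G=\mathrm{SL}_2(\R)$, $\bar H=\mathrm{SO}(2)$, the conclusion $\bar M=\bar G$ does hold, but the only way to see that a split $A$ (say the diagonal torus) meets $\bar M$ in a finite-index subgroup is to already know $\bar M$ is open, which is precisely what one is trying to prove. Neither the $S$-algebraicity of $\bar H$ nor the Zariski density of $\bar M$ supplies a mechanism for ruling out, say, a discrete cocompact $A\cap\bar M$ in an additive $A$. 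Moreover, even granting the finite-index claim for each $A$, invoking Theorem~\ref{th:alg} generates $\bar G^\vee$ from the $A$'s, not from the $A\cap\bar M$'s; to get the generated group inside $\bar M$ you would need the family $\{A\cap\bar M\}$ to be stable under all of $\bar G$-conjugation, which it is not.

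The paper's actual proof of Theorem~\ref{t:M} sidesteps these difficulties entirely by working at the Lie algebra level. After choosing open product-form subgroups $M^\circ$, $N^\circ$, $G^\circ$, it applies the Malcev lemma (Lemma~\ref{l:malcev}) to the Zariski dense subgroup $N^\circ M^\circ$ to obtain $\mathfrak{g}'\subset\mathfrak{n}+\mathfrak{m}\subset\mathfrak{m}$; then it shows $H\subset G'\overline N$ using that $\mathbf{G}/(\mathbf{G}'\overline{\mathbf{N}})$ is a split torus (no $k$-anisotropic part, since $G$ has no compact algebraic quotients) while $\pi(H)$ is a compact f.i.-algebraic subgroup of it, hence trivial. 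Combining with (F1) gives $\mathfrak{g}=\mathfrak{h}+\mathfrak{m}\subset\mathfrak{g}'+\mathfrak{n}+\mathfrak{m}=\mathfrak{m}$, so $M$ is open, and finiteness of $G/M$ follows from its compactness. I would recommend replacing your third paragraph with this Lie-algebraic argument; the split-subgroup route does not appear to be salvageable without an independent openness input.
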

Proposition \ref{cor: using fat complement} is an immediate consequence of the following.
\begin{thm} \label{t:M}
Let $ֿG$ be an $S$-algebraic group with no compact algebraic quotients.
Let $H$ be an $S$-algebraic subgroup, and let $M$ be a fat complement of $H$ in $G$.
Then $M$ is a  finite index subgroup of $G$.
\end{thm}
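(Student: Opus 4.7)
The plan is to show that $N := \Na(M)$, the f.i.-algebraic kernel of $M$, has finite index in $G$; since $N \subseteq M$, this yields $[G:M] < \infty$.

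First, by iteratively replacing $M$ with $M \cdot \overline{N}^{\mathrm{Zar}}$ (a finite-index enlargement that preserves (F1)-(F3), using $[\overline{N}^{\mathrm{Zar}}:N] < \infty$) and invoking Noetherianity of the Zariski topology to stabilize the resulting ascending chain, I may assume $N = \Na(M)$ is itself $S$-algebraic. Set $Q := G/N$, $M_0 := M/N$, and $K := \pi(H) = H/(H \cap N)$ where $\pi : G \to Q$ denotes the projection. The $S$-algebraic group $Q$ inherits $\Rdc(Q) = Q$, since any compact algebraic quotient of $Q$ would pull back to a compact algebraic quotient of $G$. The fat complement conditions translate as follows: $K$ is compact (from (F3)), $K \cdot M_0 = Q$ (from (F1)), and $M_0$ is Zariski dense in $Q$ (from (F2)). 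Moreover, the maximality of $N$ translates to the condition that $Q$ contains no nontrivial Zariski connected f.i.-algebraic normal subgroup lying in $M_0$; indeed, any such subgroup would pull back to a proper enlargement of $N$ inside $M$ satisfying all defining properties of the f.i.-algebraic kernel, contradicting maximality.

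The core of the argument is to force $Q = \{e\}$. Consider the Lie algebra $\mathfrak{m}_0 := \mathrm{Lie}(M_0)$. Zariski density of $M_0$ promotes its natural $\mathrm{Ad}(M_0)$-invariance to $\mathrm{Ad}(Q)$-invariance, so $\mathfrak{m}_0$ is an ideal of $\mathrm{Lie}(Q)$. The identity $K \cdot M_0 = Q$ gives the tangent-space relation $\mathrm{Lie}(K) + \mathfrak{m}_0 = \mathrm{Lie}(Q)$, hence $\mathrm{Lie}(Q)/\mathfrak{m}_0$ is a quotient of the compact-type Lie algebra $\mathrm{Lie}(K)$. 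Using the discompactness $\Rdc(Q) = Q$---by passing to the algebraic hull $\mathfrak{p}$ of $\mathfrak{m}_0$, noting that $Q/P$ is again discompact with compact-type Lie algebra, and forcing $Q/P$ to be trivial---one should deduce $\mathrm{Lie}(K) \subseteq \mathfrak{m}_0$, so that $\mathfrak{m}_0 = \mathrm{Lie}(Q)$ and $M_0$ is open in $Q$. The analytic identity component $Q^0$ is then a Zariski connected f.i.-algebraic normal subgroup of $Q$ contained in $M_0$; by the maximality property, $Q^0 = \{e\}$, so $Q$ is discrete. A discrete $S$-algebraic group is finite, and a finite group with $\Rdc$ equal to itself is trivial, giving $Q = \{e\}$. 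Therefore $N = G$, completing the proof.

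The main obstacle will be Step 3: rigorously establishing $\mathrm{Lie}(K) \subseteq \mathfrak{m}_0$. The delicate point is that $\mathfrak{m}_0$ need not itself be the Lie algebra of an $S$-algebraic subgroup of $Q$---non-algebraic Ad-invariant Lie subalgebras can arise when $Q$ has multiplicative-torus components---so the passage to the algebraic hull of $\mathfrak{m}_0$ and the comparison between its identity component, $M_0^0$, and $M_0$ itself must be handled carefully, with parallel attention to archimedean and non-archimedean places.
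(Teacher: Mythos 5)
The reduction in your first paragraph (replacing $M$ by $M\cdot\overline{\Na(M)}^{\mathrm{Zar}}$ until the f.i.-algebraic kernel becomes genuinely $S$-algebraic) is sound, and your setup in $Q=G/N$ with $K$ compact, $M_0$ Zariski dense, $K\cdot M_0 = Q$, $\Rdc(Q)=Q$, and no nontrivial Zariski-connected f.i.-algebraic normal subgroup of $Q$ inside $M_0$ is a faithful transcription of the problem. You have also correctly identified where the real work is. But the mechanism you propose for Step~3 does not close the gap: the claim that an $S$-algebraic group which is discompact and has Lie algebra of compact type must be trivial is false. The split torus $\mathbf{G}_m(\mathbb{R})=\mathbb{R}^\times$ is discompact (it has no compact $S$-algebraic quotients, so $\Rdc(\mathbf{G}_m)=\mathbf{G}_m$), yet its one-dimensional abelian Lie algebra is also the Lie algebra of the compact circle group. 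More generally, at the Lie algebra level one cannot distinguish a split torus from an anisotropic one; discompactness lives at the level of the algebraic group, not of $\mathrm{Lie}(Q)$. So passing to the algebraic hull $\mathfrak{p}$ of $\mathfrak{m}_0$ and appealing to compactness of $\mathrm{Lie}(K)$ cannot force $Q/P$ to be trivial, and no amount of care about archimedean versus non-archimedean places will fix this; the obstruction is structural.

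The paper avoids this by arguing at the group level with two ingredients you do not use. First, Lemma~\ref{l:malcev} (a Malcev-type lemma) applied to a Zariski-dense subgroup generated by exponentials with Lie algebra $\mathfrak{n}+\mathfrak{m}\subset\mathfrak{m}$ yields $\mathfrak{g}'\subset\mathfrak{m}$ — information about the derived algebra that is not deducible from "compact-type quotient" alone. Second, and this is the part your plan is missing most fundamentally, one considers the \emph{algebraic} abelianization $\mathbf{G}/(\mathbf{G}'\bar{\mathbf{N}})$: by the no-compact-quotient hypothesis its anisotropic part is trivial, so it is a product of a split torus and a unipotent group, and the image of $H$ there is a compact f.i.-algebraic subgroup — hence trivial, because split abelian $S$-algebraic groups have no nontrivial compact f.i.-algebraic subgroups. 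This gives $H\subset G'\bar{N}$ at the group level, whence $\mathfrak{h}\subset\mathfrak{g}'+\mathfrak{n}\subset\mathfrak{m}$, and combined with $\mathfrak{g}=\mathfrak{h}+\mathfrak{m}$ from (F1) one concludes $\mathfrak{g}=\mathfrak{m}$. The essential point is that the compactness of $K$ must interact with the $S$-algebraicity of $H$ and the algebraic structure of the abelianization, not merely with $\mathrm{Lie}(K)$.
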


For a Lie algebra $\mathfrak{g}$, we denote by $\mathfrak{g}'$ its commutator subalgebra.
Let $\hbox{Ad}:G \to\hbox{GL}(\hbox{Lie}(G))$ denote the adjoint representation.
We recall that the differential of $\hbox{Ad}$ is given by $\hbox{ad}(x)=[x,\cdot]$, $x\in\hbox{Lie}(G)$,
and there exists a neighborhood $\mathcal{O}$ of the origin in $\hbox{Lie}(G)$ such that $\exp : \mathcal{O} \to G$ is well-defined and
\begin{equation}\label{eq:good}
\hbox{Ad}(\exp(x))=\exp(\hbox{ad}(x)) \quad \hbox{for all $x\in \mathcal{O}$.}
\end{equation}
For the proof of Theorem \ref{t:M} we will need the following version of the Malcev lemma, which  follows e.g.  from Corollary 7.9 in \cite{Borel-book}.
\begin{lem}
\label{l:malcev}
Let ${G}$ be an $S$-algebraic group, $\mathfrak{g}$ its Lie algebra, and $\mathfrak{m}$ a Lie subalgebra of $\mathfrak{g}$.
Suppose the subgroup $M$ of $G$ generated by $\exp(\mathfrak{m})$ is Zariski dense in $G$. Then
 $\mathfrak{g}'=\mathfrak{m}'$.
\end{lem}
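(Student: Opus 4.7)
The inclusion $\mathfrak{m}'\subset\mathfrak{g}'$ is immediate from $\mathfrak{m}\subset\mathfrak{g}$, so the content is the reverse $\mathfrak{g}'\subset\mathfrak{m}'$. This is the classical Chevalley--Malcev statement, and my plan is to extract it from Borel's Corollary 7.9 in two stages: first showing that $\mathfrak{m}'$ is an ideal of $\mathfrak{g}$, and then identifying the Lie algebra of the Zariski closure of the derived subgroup $[M,M]$.

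For the first stage I would use Zariski density of $M$. For $x\in\mathfrak{m}$ lying in the neighborhood $\mathcal{O}$ of (\ref{eq:good}), one has $\Ad(\exp(x)) = \exp(\ad(x))$. Since $\mathfrak{m}' = [\mathfrak{m},\mathfrak{m}]$ is an ideal of the Lie subalgebra $\mathfrak{m}$, the operator $\ad(x)$ sends $\mathfrak{m}'$ into itself, and hence so does $\exp(\ad(x))$. The stabilizer $\{g\in G:\Ad(g)(\mathfrak{m}')\subset\mathfrak{m}'\}$ is a Zariski-closed subgroup of $G$ containing all such $\exp(x)$, hence contains the subgroup $M$ they generate, and by Zariski density must equal $G$. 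This gives $[\mathfrak{g},\mathfrak{m}']\subset\mathfrak{m}'$, so $\mathfrak{m}'$ is an ideal of $\mathfrak{g}$.

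For the second stage I would combine Borel's Corollary 7.9 with the Baker--Campbell--Hausdorff formula. By the former, since the Zariski closure of $M$ equals $G$, one has $[G,G]$ equal to the Zariski closure of $[M,M]$, and therefore $\mathfrak{g}'$ equals the Lie algebra of this Zariski closure. By BCH, every group commutator $[\exp(x),\exp(y)]$ for $x,y\in\mathfrak{m}$ small is of the form $\exp(z)$ with $z$ a sum of iterated Lie brackets in $x,y$, all of which lie in $\mathfrak{m}'$ (as $\mathfrak{m}'$ is an ideal of $\mathfrak{m}$); hence $[M,M]$ is contained in the analytic subgroup $K$ with $\Lie(K)=\mathfrak{m}'$, which is normal in $G$ by the first stage. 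An inductive argument along the derived series of $\mathfrak{m}'$ (invoking Corollary 7.9 again at each step to keep commutator subgroups under control) identifies the Lie algebra of the Zariski closure of $K$ as $\mathfrak{m}'$, yielding $\mathfrak{g}' \subset \mathfrak{m}'$.

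The main obstacle is precisely this last identification: Zariski closures of analytic subgroups can in general strictly enlarge the Lie algebra (consider a one-parameter subgroup of a torus with irrational slope). What saves us is exactly the structural content of Chevalley--Malcev, which guarantees that passing to the Zariski closure preserves the commutator subalgebra; feeding this into the induction on the derived series terminates in finitely many steps, pinning down the Lie algebra exactly.
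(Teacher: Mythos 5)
Your Stage 1 (that $\mathfrak{m}'$ is an ideal of $\mathfrak{g}$) is correct and is indeed a standard ingredient, and the observation that $[G,G]$ is the Zariski closure of $[M,M]$, plus the BCH bound $[M,M]\subset K$, are also fine. The genuine gap is the final step, identifying $\Lie(\overline{K}^{Z})=\mathfrak{m}'$. You acknowledge yourself that Zariski closures of analytic subgroups can strictly enlarge the Lie algebra, and then appeal to ``the structural content of Chevalley--Malcev'' — which is precisely the statement being proved, so the argument is circular. Nor does the proposed induction along the derived series close: applying the Chevalley statement to $\mathfrak{m}'$ and $\overline{K}^{Z}$ gives $\bigl(\Lie(\overline{K}^{Z})\bigr)'=\mathfrak{m}''$, which, combined with $\mathfrak{g}'\subset\Lie(\overline{K}^{Z})$, yields only $\mathfrak{g}''\subset\mathfrak{m}''$ — control at the wrong level of the derived series, never recovering $\mathfrak{g}'\subset\mathfrak{m}'$.

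The derivation the paper has in mind is much shorter. Borel's Corollary 7.9 is Chevalley's theorem on algebraic hulls of Lie subalgebras in characteristic zero: for a Lie subalgebra $\mathfrak{m}\subset\mathfrak{g}$, the smallest algebraic subgroup $\mathcal{A}(\mathfrak{m})$ of $G$ whose Lie algebra contains $\mathfrak{m}$ satisfies $\bigl(\Lie\mathcal{A}(\mathfrak{m})\bigr)'=\mathfrak{m}'$. Any algebraic subgroup whose Lie algebra contains $\mathfrak{m}$ already contains $\exp(\mathfrak{m})$ (near the identity) and hence $M$; Zariski density of $M$ then forces $\mathcal{A}(\mathfrak{m})=G$, and the corollary gives $\mathfrak{g}'=\mathfrak{m}'$ in one step. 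In other words, the correct route is to apply Corollary 7.9 once to $\mathfrak{m}$ itself, rather than to its derived subalgebra — no BCH, no induction, and no identification of $\Lie(\overline{K}^{Z})$ is required.
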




\begin{proof}[Proof of Theorem~\ref{t:M}]
Let $N=\Na(M)$. 
Let $\mathfrak{g}$, $\mathfrak{h}$, $\mathfrak{n}$ and $\mathfrak{m}$ denote the Lie algebras corresponding to $G$, $H$, $N$ and $M$.
The hypothesis that $G$ has no compact factors, along with (F2), imply that $M$ is Zariski dense, hence
\begin{equation}\label{eq:norm}
\hbox{Ad}(G)\mathfrak{m}\subset \mathfrak{m}.
\end{equation}
We choose open subgroups $M^\circ$, $N^\circ$, and $G^\circ$ of $M$, $N$, and $G$
respectively such that
\begin{enumerate}
\item[(i)] $G^\circ$ normalizes $M^\circ$ and $N^\circ$, and $N^\circ$ normalizes $M^\circ$.
\item[(ii)] the group $M^\circ N^\circ$ is generated by open neighborhoods which are in the image of a neighborhood satisfying
 (\ref{eq:good}).
\end{enumerate}
We observe that one can choose these subgroups as products of local factors:
\[
 	M^\circ=\prod_{v} M_v^\circ,\qquad N^\circ=\prod_{v} N_v^\circ,\qquad
	G^\circ=\prod_{v} G_v^\circ
\]
(see \cite[Proposition~1.5]{ratnerduke}).  
For Archimedean $v$, (ii) holds provided that $M^\circ_v$ and $N^\circ_v$
are connected. For non-Archimedean $v$, the groups have bases of
neighborhoods of identity consisting of Lie subgroups, so that $M^\circ_v$ and $N^\circ_v$
can be taken to be sufficiently small to satisfy (ii). Property (i) can be satisfied
because of (\ref{eq:norm}).

Since $G$ is Zariski connected, the subgroup $G^\circ$ is Zariski dense in $G$.
Let $\bar{M}$ be the group of $S$-points of the Zariski closure of $M^\circ$. Then $\bar{M}$ is normal in $G$.
It follows from (F1)
and the Baire category theorem that the set $H{M}^\circ$ is open in $G$. In particular, it
is Zariski dense in $G$. Since the subgroup $H\bar{M}$ is both f.i.-algebraic
(by Proposition \ref{p:alg2}) and open in $G$,
we conclude that $H\bar{M}$ has finite index in $G$.

The group $N=\Na(M)$ is f.i.-algebraic and Zariski connected.
Since
$N^\circ$ is open in $N$,
the subgroup $N^\circ$ is Zariski dense in $\bar{N}$.
Let $L$ be the Zariski closure of the subgroup ${N}^\circ{M}^\circ$. Clearly, $L$ is normal and
$L\supset \bar{N}\bar{M}$. Hence,
$HL$ is of finite index in $G$.
Since $M$ is a fat complement of $H$, $H/(H\cap N)$ is compact.
Hence,
$G/L$ is compact as well.
By our assumption on $G$, the group ${N}^\circ{M}^\circ$
is Zariski dense in $G$. Hence, we may apply Lemma \ref{l:malcev}
with the subgroup  ${N}^\circ{M}^\circ$ to conclude that
\begin{equation}\label{eq:frak}
\mathfrak{g}'\subset \mathfrak{n}+\mathfrak{m}\subset \mathfrak{m}.
\end{equation}
Let $\mathbf{G}'$ denote the (algebraic) commutator subgroup of $\mathbf{G}$
and $\pi:\mathbf{G}\to \mathbf{G}/(\mathbf{G}'\bar{\mathbf{N}})$ the corresponding factor map.
Since the algebraic group $\mathbf{G}/(\mathbf{G}'\bar{\mathbf{N}})$ is abelian, it splits as an almost
direct product of anisotropic and split subgroups. Moreover, by our assumption on $G$,
the anisotropic component is trivial.
On the other hand, since $H/(H\cap N)$ is compact,  $\pi(H)$ is a compact f.i.-algebraic subgroup. Hence, $H\subset G'\bar{N}$.
Now using (F1) and (\ref{eq:frak}), we obtain
$$
\mathfrak{g}=\mathfrak{h}+\mathfrak{m}\subset \mathfrak{g}'+\mathfrak{n}+\mathfrak{m}=\mathfrak{m}.
$$
This shows that the group $M$ is open in $G$. On the other hand, the homogeneous space
$G/M$ is compact. Hence, $G/M$ has to be finite. This completes the proof.
\end{proof}


\section{Completion of the proofs}\label{s:proof}
We will first state and prove a useful corollary of the results of the previous section.

\begin{cor} \label{c:mainthm}\hfill{}\\
Let $G$ be an $S$-algebraic group, $H$ an $S$-algebraic subgroup of $G$,
and $\Gamma$ be a lattice, such that $H\acts G/\Gamma$ is ergodic.

Then there exists a closed subgroup $\Gamma<M<G$,  where $M$ is cocompact, of finite covolume
in $G$, and a fat complement of $H$ in $G$, such that for every $S$-algebraic $G$-space $V$,
the inclusion map
\[
	\Map_M(G/H,\Prob^n(V)) \hookrightarrow \Map_{\Gamma}(G/H,\Prob^n(V))
\]
and the map
\[
	\Map_{H}(G/M,\Prob^n(V)) \to \Map_{H}(G/\Gamma,\Prob^n(V)),
\]
obtained by precomposing maps from $\Map_{H}(G/M,\Prob^n(V))$
with the projection $G/\Gamma \to G/M$, are bijections.
When $G$ has no nontrivial compact $S$-algebraic factors, $M$ is of finite index in $G$.

\end{cor}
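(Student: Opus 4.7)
The plan is to deduce Corollary \ref{c:mainthm} by combining the relative Borel density theorem (Theorem \ref{th:admiss}) with the Frobenius reciprocity machinery of Section \ref{s:frob}, specifically Corollary \ref{c:frob}. The input data is natural: since $\Gamma < G$ is a lattice, the space $X = G/\Gamma$ equipped with its $G$-invariant probability measure is a $G$-space on which $H$ acts ergodically by hypothesis, so it falls squarely under the hypotheses of Theorem \ref{th:admiss}.

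First, I would apply Theorem \ref{th:admiss} to $X = G/\Gamma$ to produce a closed subgroup $M < G$ which is cocompact, of finite covolume, and a fat complement of $H$ in $G$, together with a measure-preserving $G$-map $\pi : G/\Gamma \to G/M$. A priori $\pi$ sends the base point $e\Gamma$ to some coset $g_0 M$; replacing $M$ by its conjugate $g_0^{-1} M g_0$ (which is again a fat complement, since fatness is preserved under conjugation by $G$-equivariance) I may arrange that $\pi(e\Gamma) = eM$, which forces $\Gamma \subset M$ as required.

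Next I would verify the second bijection. Given any $H$-map $i : G/\Gamma \to \Prob^n(V)$, Theorem \ref{th:admiss} produces an $H$-map $j : G/M \to \Prob^n(V)$ with $i = j \circ \pi$, which yields surjectivity of
\[
\Map_H(G/M,\Prob^n(V)) \to \Map_H(G/\Gamma,\Prob^n(V)).
\]
For injectivity: if two such maps $j_1, j_2$ satisfy $j_1 \circ \pi = j_2 \circ \pi$ almost everywhere, then they agree on a conull subset of $G/M$ for the pushforward $\pi_* \mu_{G/\Gamma}$. Because $\pi$ is a $G$-map and both measures are $G$-invariant finite measures, this pushforward is in the Haar measure class of $G/M$, so $j_1 = j_2$ a.e. Once this second bijection is established, Corollary \ref{c:frob} (applied with $L = \Gamma$ and the given $M$) immediately translates it into the first bijection
\[
\Map_M(G/H,\Prob^n(V)) \hookrightarrow \Map_\Gamma(G/H,\Prob^n(V)),
\]
since these maps fit in the commutative square of Corollary \ref{c:frob} with the vertical Frobenius identifications.

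Finally, when $G$ has no nontrivial compact $S$-algebraic factors, the fact that $M$ is fat and Theorem \ref{t:M} together force $M$ to be a finite-index subgroup of $G$. There is no real obstacle here: the proof is essentially a bookkeeping exercise assembling Theorem \ref{th:admiss}, Corollary \ref{c:frob}, and Theorem \ref{t:M}. The only subtlety worth highlighting is the base-point normalization to ensure $\Gamma \subset M$ after applying Theorem \ref{th:admiss}, and the verification that $\pi_* \mu_{G/\Gamma}$ lies in the Haar class on $G/M$ (needed for the injectivity in the second bijection).
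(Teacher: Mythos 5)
Your proposal matches the paper's proof essentially verbatim: apply Theorem~\ref{th:admiss} to $X=G/\Gamma$, conjugate $M$ to arrange $\Gamma<M$, read off the second bijection directly from Theorem~\ref{th:admiss} (you supply the routine injectivity check the paper leaves implicit), transport it to the first bijection via Corollary~\ref{c:frob} with $L=\Gamma$, and invoke Theorem~\ref{t:M} (equivalently the paper's Proposition~\ref{cor: using fat complement}) for the finite-index conclusion. No gaps and no meaningful divergence from the paper's argument.
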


\begin{proof}
Applying Theorem \ref{th:admiss} to $X=G/\Gamma$, we obtain a group $M$ and a $G$-map $G/\Gamma \to G/M$. Replacing $M$ with a conjugate we may assume that $\Gamma < M$.
The second bijection is a direct corollary of Theorem~\ref{th:admiss}
and the first follows formally from the second using Corollary~\ref{c:frob} using $L=\Gamma$. The last assertion follows from Proposition \ref{cor: using fat complement}.
\end{proof}

\subsection{Proof of Theorem \ref{T:general-quasi-factors} and Corollaries \ref{C:general-maps}, \ref{C:general-factors}}
\begin{proof}
Theorem \ref{T:general-quasi-factors} is just the first assertion of Corollary \ref{c:mainthm}, in the case $n=1$ (recall the bijection $X^{M\cap H} \cong \Map_H(G/M, X)$ described in the introduction). Corollary \ref{C:general-maps} is the case $n=0$, and the deduction of Corollary \ref{C:general-factors} from Theorem \ref{T:general-quasi-factors} follows the same steps as the deduction of Theorem \ref{T:joinings} from Theorem \ref{T:quasi-factors}.
\end{proof}

\subsection{Proof of Theorem~\ref{T:bfgw2}} 
\label{sub:proof_of_theorem}
\begin{proof}
Fix a map $f\in \Map_\Gamma(G/\Lambda,G/\Delta)$ where the target $G/\Delta$ is viewed as a Borel $\Gamma$-space.
By Corollary~\ref{p:duality} we have  the duality
\[
	\Map_\Gamma(G/\Lambda,G/\Delta)\cong \Map_\Lambda(G/\Gamma,G/\Delta).
\]
Given $f\in \Map_\Gamma(G/\Lambda,G/\Delta)$ let $F:G/\Gamma\to G/\Delta$ be the corresponding measurable $\Lambda$-equivariant map. As in Remark~\ref{R:duality} we fix a Borel cross-section $\sigma:Y=G/\Gamma\to G$
of the projection $G\to G/\Gamma$, and take
\begin{equation}\label{e:explicit}
	F(y)=\sigma(y).f\left(\sigma(y)^{-1}\Lambda\right).
\end{equation}
Let $\mu$ denote the probability measure on the space
\[
	X=Y\times Z,\qquad \textrm{where}\qquad Y=G/\Gamma,\quad Z=G/\Delta
\]
obtained by pushing the Haar measure $m_{G/\Gamma}$ to the graph of $F$.
Since $F$ is a $\Lambda$-map, $\mu$ is invariant under the action
of ${\rm Diag}(\Lambda)=\{ (\lambda,\lambda)\in G\times G :\lambda\in\Lambda\}$ on $X$.
Since $\Lambda$ is a Zariski dense subgroup of $G$, it acts ergodically on $G/\Gamma$. Hence so is the action
${\rm Diag}(\Lambda)\acts (X,\mu)$.
Next we want to use one of the assumptions (RS) or (BQ).

\medskip

\textbf{Assuming (RS)}, $\Lambda$ is a lattice in a subgroup of $G$
which is generated by unipotent elements. Note that we do not assume that this subgroup is connected; i.e.  $\Lambda$ itself may be generated by unipotent elements. In this situation one can apply the results of
Ratner \cite{r4}, and their extension by Shah \cite{Shah} and Witte \cite{Wi}, to deduce that $\mu$ is $L$-homogeneous,
where $L$ is a closed subgroup of $G\times G$ containing ${\rm Diag}(\Lambda)$.
This means that that there is $x_0=(g_1\Gamma,g_2\Delta)\in X$ so that denoting
\[
	\Sigma=L\cap (g_1\Gamma g_1^{-1}\times g_2\Delta g_2^{-1})
\]
is a lattice in $L$, and $\mu$ is the push-forward of the Haar measure $m_{L/\Sigma}$
to $L.x_0\subset X$.
Since $\mu$ projects onto $m_{G/\Gamma}$, it follows that $L$ projects onto $G$.
Consider
\[
	H=\{ g\in G :(e,g)\in L \}.
\]
This is a closed subgroup in $G$. Recall that $\mu$ is supported on the graph of $F:G/\Gamma\to G/\Delta$.
Hence for $m_{G/\Gamma}$-a.e. $y\in G/\Gamma$ and $h\in H$ we have $h.F(y)= F(y)$.
Note that given $z=g\Delta\in G/\Delta$ the group $\Delta_z=g\Delta g^{-1}$ is
defined unambiguously. We have
\[
	H\subset \Delta_{F(y)}
\]
for $m_{G/\Gamma}$-a.e. $y\in G/\Gamma$. Consider the measure $\eta=F_*m_{G/\Gamma}$ on $Z=G/\Delta$.
Since $H$ and $\Delta$ are closed sets, there is a conull (w.r.t. $\eta$) $Z_0 \subset Z$ such that
\begin{equation}\label{e:Deltay}
	H< \bigcap_{z\in Z_0} \Delta_z.
\end{equation}
Let $M$ denote the projection of $L<G\times G$ to the second factor.
Then $\Lambda<M$ and $\eta\in\Prob(Z)$ is $M$-homogeneous.
The connected component $M^0$ of the identity in $M$ is normal in $M$,
and therefore normalized by $\Lambda$, which is Zariski dense in $G$.
Hence $M^0$ is normal in $G$.
Since $G$ is simple and connected, we have
\begin{enumerate}
	\item either $M^0=\{e\}$ and $M$ is discrete,
	\item or $M^0=M=G$.
\end{enumerate}
Case (1). As $M$ is discrete, $\eta$ is atomic, and since it is a $\Lambda$-invariant and ergodic probability measure, it is supported on a finite $\Lambda$-orbit. That is, $F$ has a finite image.  Let $\Lambda_0$ be a finite index normal subgroup of $\Lambda$ acting trivially on the image of $F$. By the Howe-Moore theorem, $\Lambda_0$ also acts ergodically on $G/\Gamma$, which implies that $F$ is essentially constant. That is,
$\eta=\delta_{z_0}$ for some
$\Lambda$-fixed point $z_0=a\Delta\in G/\Delta$.
We deduce $\Lambda\subset \Delta_{z_0}=a\Delta a^{-1}$ from (\ref{e:Deltay}),
and have $F(g\Gamma)=y_0$ a.e. It follows from (\ref{e:explicit}) that
\[
	f(g\Lambda)=g a\Delta.
\]
Case (2): $M=G$ and $\eta=m_{G/\Delta}$. Since $Z_0$ in (\ref{e:Deltay}) is conull, it follows that $H=\{e\}$.
Recalling the definition of $H$, we deduce that
\[
	L=\{ (g,\rho(g)) : g\in G \}
\]
for some continuous homomorphism $\rho:G\to G$, which is therefore algebraic.
As ${\rm Diag}(\Lambda)<L$ we get $\rho(\lambda)=\lambda$ for $\lambda\in\Lambda$;
and Zariski density of $\Lambda$ in $G$ implies $\rho(g)=g$ for all $g\in G$.
Therefore $F(gg_1\Gamma)=gg_2\Delta$, or
\[
	F(g\Gamma)=gg_0\Delta
\]
with $a=g_1^{-1}g_2$; in particular $\Gamma<\Delta_{a}$.
From (\ref{e:explicit}) we deduce, using a Borel cross-section $s:G/\Lambda\to G$, that
\[
	f(g\Lambda)=s(g\Lambda) F\left(s(g\Lambda)^{-1}\Gamma\right )=s(g\Lambda)s(g\Lambda)^{-1}a\Delta=a\Delta,
\]
which is an a.e. constant function.
This completes the proof of Theorem~\ref{T:bfgw2} under assumption (RS).
Let us proceed to the proof under assumption (BQ).

\medskip

\textbf{Assuming (BQ)}, we have $\Delta<\Delta_0$ where $\Delta_0$ is a lattice in $G$.
Denote by $\pi:G/\Delta\to G/\Delta_0$ the natural projection, and let
\[
	F_0:G/\Gamma\overto{F} G/\Delta\overto{\pi} G/\Delta_0,\qquad \eta_0=\pi_*\eta\in\Prob(G/\Delta_0).
\]
Then $\eta_0$ is a $\Lambda$-invariant and ergodic probability measure on $G/\Delta_0$.
Since $\Lambda$ is assumed to be Zariski dense in $G$, we can apply the recent result
of Benoist-Quint \cite{BQ} to the action $\Lambda\acts G/\Delta_0$ to deduce the dichotomy:
\begin{enumerate}
	\item either $\eta_0$ is atomic, equidistributed on a finite $\Lambda$-orbit $\Lambda g_0\Delta_0\subset G/\Delta_0$,
	\item or $\eta_0=m_{G/\Delta_0}$ is the Haar measure on $G/\Delta_0$.
\end{enumerate}
In case (1), $\eta$ is also atomic, and we conclude the proof as in the previous case.
We are left with case (2), where the probability measure $\eta$ on $G/\Delta$ projects
onto the normalized Haar measure $m_{G/\Delta_0}$ on $G/\Delta_0$.
We claim that this is possible, only if $\Delta$ has finite index in $\Delta_0$, and $\eta$
is the normalized Haar measure $m_{G/\Delta}$.

First let us identify the $G$-action on $G/\Delta$ with the skew-product $G$-action on
$G/\Delta_0\times \Delta_0/\Delta$ given by
\[
	g_1: (g\Delta_0,a\Delta)\mapsto (g_1g\Delta_0, c(g_1,g\Delta_0)a\Delta),
\]
where $c:G\times G/\Delta_0\to\Delta_0$ is the cocycle
\[
	c(g_1,g\Delta_0)=\sigma(g_1g\Delta_0)^{-1}g_1\sigma(g\Delta_0)
\]
associated to a choice of a Borel section $\sigma:G/\Delta_0\to G$ for the projection $G\to G/\Delta_0$.
That is we have a Borel isomorphism
of the $G$-actions on
\[
	G/\Delta\quad \cong\quad G/\Delta_0\times \Delta_0/\Delta.
\]
Consider the restriction to the action of $\Lambda<G$, and view
the $\Lambda$-invariant and ergodic probability measure $\eta$ realized on
$G/\Delta_0\times \Delta_0/\Delta$.
Since $\eta$ projects to the Haar measure $m_{G/\Delta_0}$, the disintegration of $\eta$
with respect to $m_{G/\Delta_0}$ has the form
\[
	\eta=\int_{G/\Delta_0} \eta_{x}\,dm_{G/\Delta_0}(x),
	\qquad
	\eta_x \in \Prob(\Delta_0/\Delta).
\]
Moreover, for $\lambda\in\Lambda$ and $m_{G/\Delta_0}$-a.e. $x$ one has
\begin{equation}\label{e:c-equiv}
	 \eta_{\lambda.x}=c(\lambda,x)_*\eta_x
\end{equation}
because $\eta$ is $\Lambda$-invariant. The set $J=\Delta_0/\Delta$ is at most countable,
so each probability measure $\eta_x$ on $J$ is atomic and therefore has a well-defined
maximal `weight'. That is, for $x\in G/\Delta_0$ we define
\[
	p(x)=\max_{j\in J}\eta_x(\{j\}),\qquad A(x)=\left\{ i\in J  :  \eta_x\left(\{i\}\right)=p(x)\right\}.
\]
It follows from (\ref{e:c-equiv}) that
\[
	A(\lambda.x)=c(\lambda,x)^{-1} A(x)\qquad (x\in G/\Delta_0,\ \lambda\in\Lambda)
\]
where $A(x)\subset J=\Delta_0/\Delta$ are finite sets; the cardinality $|A(x)|$ being
a.e. constant by ergodicity of $\Lambda\acts (G/\Delta_0,m_{G/\Delta_0})$.

We claim that $J$ is finite. Assume otherwise, and consider the probability space
\[
	(Y,\nu)=(Y_0,\nu_0)^J
\]
where $(Y_0,\nu_0)$ is a non-trivial probability space, say $\{0,1\}$ with $(1/2,1/2)$-weights.
Since $\Delta_0\acts J=\Delta_0/\Delta$ is a transitive action on an infinite index set,
the corresponding $\Delta_0$-action on $(Y,\nu)$ is an ergodic measure-preserving action.
Consider the induced $G$-action
\[
	G\acts (G/\Delta_0\times Y,m_{G/\Delta_0}\times \nu),\qquad g:(x,y)\mapsto (g.x, c(g,x).y).
\]
Actions induced to $G$ from ergodic actions of a lattice $\Delta_0<G$
are ergodic. So $G\acts G/\Delta_0\times Y$ is ergodic.
By Moore's theorem, the restriction to any unbounded subgroup, in particular
to $\Lambda$, remains ergodic.
Recall the sets $A(x)\subset J$ and the fact that $Y=\{0,1\}^J$.
Consider the set
\[
	Z=\{(x,y)\in G/\Delta_0\times \{0,1\}^J : \forall j\in A(x),\ y(j)=0\}.
\]
Observe that $m_{G/\Delta_0}\times \nu(Z)=2^{-k}>0$ where $k=|A(x)|$,
and that $Z$ is invariant under the $\Lambda$-action.
This contradicts ergodicity of the $\Lambda$-action,
showing that the assumption that $J$ is infinite was wrong.

We have proved that $[\Delta_0:\Delta]<+\infty$; in particular $\Delta<G$
is a lattice. Hence \cite{BQ} implies that the $\Lambda$-invariant and ergodic
probability measure $\eta$ on $G/\Delta$ is the normalized Haar measure $\eta=m_{G/\Delta}$
(because it cannot be atomic under our assumption).
Now consider the pushforward measure $m$ of $m_{G/\Gamma}$ to the
graph of $F:G/\Gamma\to G/\Delta$.
This is a probability measure on
\[
	G\times G/\Gamma\times\Delta
\]
invariant under the diagonal action of $\Lambda$.
By \cite{BQ} such a measure should be homogeneous for a subgroup $L<G\times G$
containing $\Lambda$.
The argument can now be completed as in the (RS) case, by ruling out
the possibility that $L=G\times G$ (because $m$ cannot be a product measure
$m_{G/\Gamma}\times m_{G/\Delta}$), and deducing that the $\Lambda$-equivariant
map $F:G/\Gamma\to G/\Delta$
is actually $G$-equivariant. This, in turn, implies that the
original $\Gamma$-equivariant map $f:G/\Lambda\to G/\Delta$ is a constant map.
\end{proof}

In order to prove Theorem~\ref{cor: non-lattices1} we use recurrence instead of
the existence of a finite invariant measure.
An action of a group $A$ on an $A$-space $Z$ is called {\bf strictly conservative} if for
any non-null Borel $B \subset Z$ and any compact $C \subset
A$, there is $a \in A \sm C$ such that $a B
\cap B$ is non-null.

\begin{lem}\label{l:rec}
Let $A$ be an lcsc group and $Y$ a second countable topological space
equipped with a Borel probability measure $\nu$ on which $A$ acts strictly conservatively.
Then for almost every $y\in Y$, there is a sequence $a_n\in A$ such that
$a_n\to \infty$ and $a_n y \to y$.
\end{lem}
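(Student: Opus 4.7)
My plan is to run the standard Poincar\'{e}-style recurrence argument, discretized through a countable topological basis for $Y$ and a compact exhaustion of $A$. Specifically, I will fix a countable basis $\{V_m\}_{m\ge 1}$ for the topology of $Y$ and an increasing sequence of compact sets $C_1\subset C_2\subset\cdots$ with $\bigcup_n C_n=A$ (which exists because $A$ is lcsc). The goal is to produce a single $\nu$-null set $N\subset Y$ such that for $y\notin N$, each basis element $V_m\ni y$ contains $ay$ for some $a\in A\setminus C_n$, for every $n$; the desired sequence $a_k\to\infty$ with $a_k y\to y$ is then assembled from the first-countable structure at $y$.

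The heart of the proof is the following claim: for each $m,n$, the set
\[
F_{m,n}=\{y\in V_m:\exists\,a\in A\setminus C_n,\ ay\in V_m\}
\]
satisfies $\nu(V_m\setminus F_{m,n})=0$. I would argue by contradiction. Note that $F_{m,n}$ is the projection to $Y$ of a Borel subset of $(A\setminus C_n)\times V_m$ under the Borel action map, hence is analytic and therefore universally measurable, so if $V_m\setminus F_{m,n}$ had positive measure, the completion of $\nu$ would furnish a Borel set $B\subset V_m\setminus F_{m,n}$ with $\nu(B)>0$. Applying strict conservativity to $B$ with compact set $C_n$ yields $a\in A\setminus C_n$ with $\nu(aB\cap B)>0$, and hence a point $y\in B$ with $ay\in B\subset V_m$; this directly contradicts $y\notin F_{m,n}$. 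The main (and essentially only) technical subtlety is this measurability point, which is handled by the universal measurability of analytic sets.

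Having established the claim, I set $N=\bigcup_{m,n}(V_m\setminus F_{m,n})$, which is $\nu$-null. For $y\notin N$, recurrence to basis elements upgrades automatically to recurrence to arbitrary open neighborhoods: given open $U\ni y$ and compact $C\subset A$, choose $m$ with $y\in V_m\subset U$ and $n$ with $C\subset C_n$; then $y\in F_{m,n}$ produces $a\in A\setminus C_n\subset A\setminus C$ with $ay\in V_m\subset U$. Finally, I enumerate the basis elements containing $y$ as $V_{m_1},V_{m_2},\ldots$, set $W_k=V_{m_1}\cap\cdots\cap V_{m_k}$ (a decreasing neighborhood base at $y$ by second countability), and for each $k$ choose $a_k\in A\setminus C_k$ with $a_k y\in W_k$ by the upgraded recurrence. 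Then $a_k\to\infty$ since $a_k\notin C_k$ and $\{C_k\}$ exhausts $A$, while $a_k y\to y$ because $\{W_k\}$ is a neighborhood base at $y$, finishing the proof.
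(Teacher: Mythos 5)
Your proof is correct and follows essentially the same route as the paper: both arguments discretize through a countable basis of $Y$ and a compact exhaustion of $A$, and apply strict conservativity to a Borel subset of $\{y\in U:\forall a\in A\smallsetminus C,\ ay\notin U\}$ (your $V_m\smallsetminus F_{m,n}$ equals the paper's $Y_{C_n,V_m}$). The only differences are presentational: you spell out the measurability point (projections being analytic, hence universally measurable, so one may pass to a Borel subset of full measure) which the paper elides, and you assemble the sequence $a_k$ explicitly rather than leaving it implicit; also the paper works with $\operatorname{supp}\nu\smallsetminus Y_\infty$ while you avoid mentioning the support, which is equally valid.
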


\begin{proof}
For an open $U \subset Y$ of positive measure and compact $C \subset A$,
let
$$Y_{C, U} = \{ z\in {U} : \forall a
\in A \sm C,
\, a z \notin {U}  \}.$$
This is a nullset by strict conservativity.
Taking an
exhaustion of $A$ by countably many compacts $\{C_i\}$ and a
countable basis ${U}_j$ of open sets of $Z$, we have that
$Y_\infty = \bigcup_{i,j:\nu(U_j)>0} Y_{C_i,
{U}_j}$ is also a nullset, and any point in
$\hbox{supp}\, \nu \sm Y_\infty$ has the required properties.
\end{proof}

\begin{proof}[Proof of Theorem \ref{cor: non-lattices1}]
We identify the group $G$ of orientation preserving isometries of $\HH$ with
$\hbox{PSL}_2(\mathbb{R})$. Without loss of generality we may assume that $\Gamma\subset G$.
The space $\partial \HH \times \partial \HH$ is isomorphic
as a $G$-space to the factor space $G/A$ where $A$ is the diagonal subgroup of $G$.
By Corollary \ref{p:duality},
\begin{align}\label{eq:mapp}
	\Map_\Gamma(G/A,G/A) \simeq \Map_A(G/\Gamma,G/A).
\end{align}
The  action of $A$ on $G/\Gamma$ is precisely the geodesic flow on the unit tangent bundle of the hyperbolic
surface $\HH/\Gamma$. According to \cite{Hopf}, the geodesic flow is either ergodic and
conservative, or totally dissipative.
Since $\Gamma$ acts ergodically on $G/A$, we are dealing with the former case.

Let $\Phi:G/\Gamma\to G/A$ be an $A$-map.
Let $\mu$ be a Haar measure on $G/\Gamma$ and $\nu=\Phi_*\mu$.
Then $\nu$ is ergodic and conservative with respect to the action of $A$.
From this we deduce that  for $\nu$-almost every $x\in G/A$ there exists a sequence
$a_i\to \infty$ in $A$ such that $a_ix\to x$ (Lemma~\ref{l:rec}).
On the other hand, it is easy to check that the sequence $a_ix$ may only accumulate
on the set $(G/A)^A$ of fixed points of $A$. This shows that for $\mu$-almost every $x\in G/\Gamma$,
we have $\Phi(x)\in (G/A)^A.$ Note that $(G/A)^A\simeq \mathcal{N}_G(A)/A$ where $N_G(A)$ denotes the normalizer
of $A$ in $G$. Now it follows from ergodicity that the set $\hbox{Map}_A(G/\Gamma,G/A)$ consists of exactly
two elements indexed by $N_G(A)/A$. Under identification \eqref{eq:mapp},
these two elements correspond to the maps
\[
	G/A\to G/A,\qquad gA\mapsto gnA, \qquad (n\in \mathcal{N}_G(A)/A),
\]
which are the identity map and the flip.
\end{proof}


\section{Examples}\label{s:ex}

In this section we give several elementary examples which
illustrate that the assumptions imposed in our main results are essential.

\begin{example} \label{ex:product}
{\rm
This example demonstrates that a fat complement $M$ may be of infinite index, and that the subgroup $M$ in Theorem \ref{C:general-maps} may in general be of  infinite index. Consider an $S$-algebraic group $\mathbf{G}=\mathbf{H}\times \mathbf{L}$
such that $H$ is compact and set $\Gamma=L$, $Y=G$.
Then
$$
\Phi:G/H\to Y,\ \ lH\mapsto (e,l),\quad l\in L,
$$
defines a $\Gamma$-map. The largest group which could play the role of $M$ in the statement is $M=L$.

}
\end{example}

\begin{example}{\rm
This example shows that the group $M$ in Theorem \ref{T:general-quasi-factors} does not have to be f.i.-algebraic.
Let $\mathbf{G}=\mathbf{\hbox{SO}_n}\times \mathbf{\hbox{G}_a}$ where $\mathbf{\hbox{SO}_n}$ denotes the orthogonal group and
$\mathbf{\hbox{G}_a}$ denotes the additive group. We fix a compact one-parameter subgroup
$\{k(t)\}_{t\in\mathbb{R}}$ of $\hbox{SO}_n(\mathbb{R})$ and set $\Gamma=\{(k(t),t)\}_{t\in\mathbb{R}}$.
We also set $H=\hbox{SO}_n$. Then the map
$$
\Phi:G/H\to G, \ \  (e,t)H\mapsto (k(t),t),\quad t\in \mathbb{R},
$$
is $\Gamma$-equivariant, but not equivariant almost everywhere with respect to any larger subgroup.
}
\end{example}

\begin{example} {\rm
This example shows that even when $G$ in Theorem \ref{T:general-quasi-factors} has no
nontrivial compact $S$-algebraic quotients, the subgroup $M$ could be proper (cf. Theorem \ref{t:M}).
Let $G=\hbox{G}_m$, $H=\left<\pm 1\right>$, $\Gamma=\mathbb{R}_+^\times$,
and
$$
\Phi:\mathbb{R}^\times/\left<\pm 1\right> \to \mathbb{R}^\times, \ \ x\left<\pm 1\right>\mapsto x, \quad
x\in \mathbb{R}_+^\times,
$$
be the factor map. Then  $\Phi$ is $\mathbb{R}_+^\times$--equivariant but not
$\mathbb{R}^\times$-equivariant.
}
\end{example}

\begin{example} \label{ex:factor}
{\rm
This example shows that the assumption in Corollary \ref{C:general-factors}
that the factor is measure-preserving
is essential. Let $\mathbf{G}=\textrm{PSL}_2$, $k=\mathbb{R}$, and let $H$ be the diagonal subgroup of $G$.
Let $\Gamma$ be a cocompact lattice in $G$ and $X=G/H$ equipped with the Haar measure class $\mu$.
Then  $\Gamma$ acts ergodically on $X$, and
Theorem \ref{T:general-quasi-factors}
implies that every measure-preserving $\Gamma$-factor of $X$ is of the form
$G/Q$ where  $Q$ is a closed subgroup of $G$ such that $H$ is a normal cocompact subgroup of $Q$.
Hence, there are only two measure-preserving $\Gamma$-factors of $X$
corresponding to $Q=H$ and $Q=\mathcal{N}_{G}(H)$.

On the other hand, there exists an infinite normal subgroup $\Lambda$ of $\Gamma$
such that $\Lambda$ does not act ergodically on $X$. For instance,  one can take
$\Lambda$ such that $\Gamma/\Lambda\simeq \mathbb{Z}^k$ with $k\ge 3$ (see \cite{Rees}).
Then there is a nontrivial measure-class-preserving $\Gamma$-factor $Y=(X\ec \Lambda, P_\Lambda(\mu))$
(see Appendix \ref{sec:erg}). Since $\Lambda$ acts trivially on $Y$ and $G$ is simple,
the space $Y$ cannot be isomorphic to a $G$-space.
}
\end{example}

\begin{appendix}
	

\section{The space of ergodic components}\label{sec:erg}
\label{sec:measure_class_preserving_actions}

Let $G$ be an lcsc group. Given a $G$-space $(X,\mu)$, we denote by $\mathcal{B}(X)$ the Boolean
$\sigma$-algebra consisting of measurable sets modulo the ideal of null sets.
Note that every measure-class-preserving $G$-map $\Phi:(X,\mu)\to (Y,\nu)$
between $G$-spaces
induces a $G$-equivariant Boolean-$\sigma$-algebra homomorphism $\Phi^*:\mathcal{B}(Y)\to\mathcal{B}(X)$
defined by $\Phi^*([A])=[\Phi^{-1}(A)]$ for a Borel subset $A\subset X$.

For a $G$-space $(X,\mu)$, we introduce the space of ergodic components $X\ec G$.
This space can be characterized by the universal property ---
Proposition \ref{p:erg_comp}(1) below.

\begin{prop}[Ergodic decomposition]\label{p:erg}
Given a $G$-space $(X,\mu)$, there exist a standard Borel space $X\ec G$
and a Borel map $P_G:X\to X\ec G$ such that
\begin{itemize}
\item $P_G(gx)=P_G(x)$ for all $g\in G$ and almost every $x\in X$,
\item $P_G^*(\mathcal{B}(X\ec G))$ is the algebra of $G$-invariant elements in $\mathcal{B}(X)$.
\end{itemize}
\end{prop}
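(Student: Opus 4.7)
The plan is to realize $X\ec G$ as a concrete standard Borel space obtained from the Boolean $\sigma$-algebra of $G$-invariant sets in $\mathcal{B}(X)$, and then extract $P_G$ from the general representation theorem for countably generated Boolean $\sigma$-algebras.

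First, I would introduce $\mathcal{A}\subset\mathcal{B}(X)$, the sub-Boolean-$\sigma$-algebra consisting of those classes $[A]\in\mathcal{B}(X)$ for which $[g^{-1}A]=[A]$ for every $g\in G$. The key preliminary step is to show that $\mathcal{A}$ is countably generated. Since $G$ is locally compact and second countable, it has a countable dense subgroup $G_0$; using continuity of the action $G\curvearrowright\mathcal{B}(X)$ in the measure-algebra topology (itself inherited from second countability of $X$), one checks that $\mathcal{A}$ coincides with the algebra of $G_0$-invariant classes. Taking a countable family of generators of $\mathcal{B}(X)$ and saturating it under the countable $G_0$-action then provides countably many generators of $\mathcal{A}$.

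Next, I would apply the classical realization theorem (see Mackey's point realization theorem, or equivalently the fact that every countably separated Boolean $\sigma$-algebra is isomorphic to $\mathcal{B}(Y)$ for some standard Borel space $Y$, unique up to isomorphism) to produce a standard Borel space $Y$, a Boolean isomorphism $\iota:\mathcal{B}(Y)\overset{\sim}{\to}\mathcal{A}$, and a Borel map $P:X\to Y$ that induces $\iota$ via $P^*[B]=\iota([B])$. I would then set $X\ec G=Y$ and $P_G=P$; the second bullet of the proposition is literally the definition of $P_G$.

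To verify the equivariance bullet, I fix $g\in G$ and consider the two Borel maps $P_G$ and $x\mapsto P_G(gx)$ from $X$ to $Y$. Both pull $\mathcal{B}(Y)$ back into $\mathcal{A}$: for the second map this uses that the preimage of any $B\in\mathcal{B}(Y)$ under $x\mapsto P_G(gx)$ is $g^{-1}P_G^{-1}(B)=P_G^{-1}(B)$ up to nullsets, by $G$-invariance of the class $[P_G^{-1}(B)]\in\mathcal{A}$. Hence the two pullback maps agree at the level of Boolean algebras, and since $Y$ is standard Borel and the Boolean algebra $\mathcal{B}(Y)$ separates points of $Y$ (a.e.), the two maps must agree almost everywhere on $X$. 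This gives $P_G(gx)=P_G(x)$ for a.e.\ $x$, as required.

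The main obstacle I anticipate is the countable generation of $\mathcal{A}$, together with the need to work modulo nullsets throughout (so that $P_G$ is only well-defined a.e.); once this is in hand the realization theorem does the rest. This is essentially the standard construction of the space of ergodic components, and I would simply cite Mackey's point realization theorem as the key black box.
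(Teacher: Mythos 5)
Your overall strategy -- realize the invariant sub-$\sigma$-algebra $\mathcal{A}\subset\mathcal{B}(X)$ as the measure algebra of a standard Borel space via Mackey's point realization theorem, and extract $P_G$ from the realization -- is exactly the standard ``folklore'' construction. The paper itself gives no proof: it simply cites Greschonig--Schmidt \cite[Theorem~5.2]{G+S}, whose argument is in the same spirit as yours. So your route is appropriate, and the verification of the equivariance bullet (pulling back $\mathcal{B}(Y)$ by the two maps $x\mapsto P_G(x)$ and $x\mapsto P_G(gx)$ and using the point-separation/uniqueness property of realizations, essentially \cite[Theorem~2.1]{ram}) is fine as stated.

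The one step you should tighten is the claim that ``saturating a countable generating family of $\mathcal{B}(X)$ under the countable group $G_0$'' yields a countable generating family for $\mathcal{A}$. As written this is not clearly correct: the $G_0$-orbit of a generating family of $\mathcal{B}(X)$ generates $\mathcal{B}(X)$ again, not $\mathcal{A}$, and even if you replace each $A_n$ by its $G_0$-saturation $\bigcup_{g\in G_0}gA_n$ (which does land in $\mathcal{A}$), it is not evident that these generate all of $\mathcal{A}$. In fact no such construction is needed: countable generation of $\mathcal{A}$ modulo nullsets is automatic, because $\mathcal{B}(X)$ equipped with the metric $d([A],[B])=\mu(A\triangle B)$ is a separable complete metric space (since $X$ is standard Borel and $\mu$ a probability measure), and $\mathcal{A}$ is a closed subset, hence separable; equivalently, $L^2(X,\mathcal{A},\mu)$ is a closed subspace of the separable Hilbert space $L^2(X,\mu)$. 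This gives the countable generation directly. Your use of the dense countable subgroup $G_0$ is still valuable, but only in the preliminary reduction showing that $G$-invariance in $\mathcal{B}(X)$ is equivalent to $G_0$-invariance, not for producing generators.
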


Proposition \ref{p:erg} is a part of folklore in ergodic theory,
see, for instance, \cite[Theorem~5.2]{G+S} for a proof.

We shall use the following functorial properties of the space $X\ec G$:

\begin{prop}\label{p:erg_comp}
\begin{enumerate}
\item Let $(X,\mu)$ be a $G$-space, $Y$ a standard Borel space, and $\Phi:X\to Y$ a Borel map
such that $\Phi(gx)=\Phi(x)$ for all $g\in G$ and almost every $x\in X$.
Then there exists a Borel map $\bar\Phi:X\ec G\to Y$ such that $\Phi=\bar\Phi \circ P_G$ on a conull set.


\item If $H$ is a closed normal subgroup of an lcsc group $G$ and $(X,\mu)$ is a $G$-space,
then $X\ec H$ is equipped with a structure of a $G$-space so that
$P_H:X\to X\ec H$ is a $G$-map.
\end{enumerate}
\end{prop}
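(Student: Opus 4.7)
My plan is to treat the two parts separately, using in each case the characterization of $X \ec G$ via its Boolean $\sigma$-algebra of invariant sets together with a Mackey-style point realization theorem for Boolean $\sigma$-algebra homomorphisms between standard Borel probability spaces.

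For part~(1), I would start from the induced Boolean homomorphism $\Phi^{\ast}\colon \mathcal{B}(Y) \to \mathcal{B}(X)$. The hypothesis that $\Phi(gx)=\Phi(x)$ for all $g$ and $\mu$-a.e.\ $x$ shows that for every Borel $A \subset Y$ the class $[\Phi^{-1}(A)]$ is $G$-invariant in $\mathcal{B}(X)$. By the second property of $P_G$ in Proposition~\ref{p:erg}, such an invariant class equals $P_G^{\ast}[B]$ for a unique $[B] \in \mathcal{B}(X\ec G)$. This produces a Boolean $\sigma$-homomorphism $\psi\colon \mathcal{B}(Y) \to \mathcal{B}(X\ec G)$ with $\Phi^{\ast}= P_G^{\ast}\circ \psi$. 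Since $Y$ and $X\ec G$ are standard Borel spaces (the latter equipped with the measure $(P_G)_{\ast}\mu$), the Mackey point realization theorem supplies a Borel map $\bar\Phi\colon X\ec G \to Y$, unique modulo null sets, with $\bar\Phi^{\ast}=\psi$. Tracing the identity $\Phi^{\ast}=P_G^{\ast}\circ \bar\Phi^{\ast}$ back to point maps gives $\Phi=\bar\Phi\circ P_G$ on a conull subset of $X$.

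For part~(2), I would use normality of $H$ to transport the $G$-action to $X\ec H$. Fix $g \in G$ and consider $L_g\colon X\to X$, $x\mapsto gx$. For any $h\in H$ and a.e.\ $x$, $L_g(hx)=(ghg^{-1})L_g(x)$, and $ghg^{-1}\in H$, so $P_H\circ L_g\colon X \to X\ec H$ is $H$-invariant a.e. Part~(1), applied to the $H$-space $(X,\mu)$ and target $Y=X\ec H$, produces a Borel map $\bar L_g\colon X\ec H \to X\ec H$, unique up to null sets, such that $P_H\circ L_g=\bar L_g\circ P_H$ a.e. The cocycle identity $\bar L_{g_1g_2}=\bar L_{g_1}\circ \bar L_{g_2}$ and $\bar L_e=\mathrm{Id}$ hold a.e.\ by the uniqueness clause of~(1) (both sides solve the same universal problem for $L_{g_1g_2}$). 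Thus $G$ acts by measure-class-preserving Borel isomorphisms of $X\ec H$, and by construction $P_H$ intertwines the actions.

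The main technical obstacle I anticipate is promoting the pointwise action $g\mapsto \bar L_g$ to a jointly Borel action $G\times (X\ec H)\to X\ec H$; defining each $\bar L_g$ only a.e.\ is not enough. To handle this I would invoke the standard mechanism for measurable actions: first observe that $g\mapsto \bar L_g$ is a Borel homomorphism from $G$ into the Polish group $\Aut(X\ec H,[(P_H)_{\ast}\mu])$ (Borelness follows because the $G$-action on $X$ is Borel and $P_H^{\ast}$ is an isomorphism of Boolean $\sigma$-algebras onto the $H$-invariants, hence the induced map $G\to \Aut(\mathcal{B}(X\ec H))$ is Borel); then appeal to the theorem (see e.g.\ \cite[Appendix B]{Zimmer-book}) that any Borel homomorphism of an lcsc group into the automorphism group of a standard probability space is induced by a genuine Borel action, defined on a conull $G$-invariant subset. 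After replacing $X\ec H$ by this subset we obtain the required $G$-space structure with $P_H$ a $G$-map.
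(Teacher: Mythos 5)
Your part~(1) is essentially identical to the paper's argument: both pass through the induced Boolean $\sigma$-homomorphism $\Phi^{\ast}$, use the characterization of $P_G^{\ast}(\mathcal{B}(X\ec G))$ as the subalgebra of $G$-invariant classes, apply the Mackey--Ramsay point realization theorem to produce $\bar\Phi$, and then deduce $\Phi=\bar\Phi\circ P_G$ a.e.\ from the fact that maps with equal pullback homomorphisms agree a.e. The only cosmetic difference is that the paper first invokes \cite[B.5]{Zimmer-book} to replace $\Phi$ by a version that is genuinely $G$-invariant on a conull Borel set, whereas you work directly with the ``for each $g$, a.e.\ $x$'' hypothesis at the Boolean-algebra level; both are fine.

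For part~(2) your route is genuinely different, though it reaches the same place. The paper stays entirely at the Boolean $\sigma$-algebra level: since $H\triangleleft G$, the subalgebra $P_H^{\ast}(\mathcal{B}(X\ec H))$ of $H$-invariants is stable under the $G$-action on $\mathcal{B}(X)$, so one gets a $G$-action on $\mathcal{B}(X\ec H)$ for free, and a single application of the point-realization theorem for $G$-Boolean-actions (Mackey/Ramsay, the item the paper labels (iii)) produces the $G$-space structure on $X\ec H$ together with joint Borelness, all at once; equivariance of $P_H$ then follows again from the uniqueness statement (i). You instead descend to individual group elements: for each $g$ you apply part~(1) to $P_H\circ L_g$ to get $\bar L_g$, verify the cocycle identity by uniqueness, and then must do extra work to upgrade the element-by-element family $\{\bar L_g\}$ to a jointly Borel action, which you correctly identify as the nontrivial step and handle by the theorem that a Borel homomorphism $G\to\Aut(X\ec H,[\cdot])$ comes from a genuine Borel action. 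That last step is really a restatement of the same Mackey-style point realization the paper uses; so your argument is correct, but takes a more circuitous path, effectively rederiving the $G$-action-on-Boolean-algebras realization theorem from the map-realization theorem plus a lifting argument. The paper's presentation is more economical; yours makes the role of normality of $H$ (via $L_g(hx)=(ghg^{-1})L_g(x)$) more visible at the point level.
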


\begin{proof}
In the proof we shall use the well-known correspondence between standard Borel spaces
and Boolean algebras (see \cite{mac, ram}). Specifically, we shall use
the following:
\begin{enumerate}
\item[(i)] (\cite[Theorem 2.1]{ram}) If $\Phi,\Psi:(X,\mu)\to (Y,\nu)$ are $G$-maps
such that $\Phi^*=\Psi^*$, then $\Phi=\Psi$ on a conull set.
\item[(ii)] (\cite[Theorem 3]{mac}, \cite[Theorem 3.6]{ram})
If $(X,\mu)$ and $(Y,\nu)$ are $G$-spaces and $\phi:\mathcal{B}(Y)\to \mathcal{B}(X)$
is a $G$-equivariant homomorphism, then there exists a measure-class-preserving $G$-map
such that $\Phi^*=\phi$.
\item[(iii)] (\cite[Theorem 1]{mac}, \cite[Theorem 3.3]{ram}) Every Boolean space
equipped with a $G$-action is $G$-equivariantly isomorphic to a Boolean measure space associated with
a $G$-space.
\end{enumerate}

Now we start with the proof of (1).
By \cite[B.5]{Zimmer-book} there exists a measurable map $\Psi:X\to Y$ such that
$\Psi=\Phi$ on a conull set and $\Psi$ is $G$-equivariant on a
$G$-invariant Borel sets of full measure. This implies that the Boolean $\sigma$-algebra
$\Phi^*(\mathcal{B}(Y))$ consists of $G$-invariant elements, i.e.,
$$
\Phi^*(\mathcal{B}(Y))\subset P_G^*(\mathcal{B}(X\ec G)).
$$
Let $\bar\phi:\mathcal{B}(Y)\to \mathcal{B}(X\ec G)$ be the corresponding embedding.
By (ii) there exists a $G$-map $\bar\Phi:X\ec G \to Y$ such that $\bar\Phi^*=\phi$.
Then $P_G^*\circ \bar \Phi^*= (\bar \Phi\circ P_G)^*= \Phi^*$, and it follows from (i)
that $\bar\Phi \circ P_G=\Phi$ on a conull set, as required.


Let $H$ be a closed normal subgroup of $G$.
Since $P_H^*(\mathcal{B}(X\ec H))$ is the algebra of $H$-invariant elements,
it invariant under $G$. This defines an action of $G$ on $\mathcal{B}(X\ec H)$ such that
the map $P_H^*:\mathcal{B}(X\ec H)\to\mathcal{B}(X)$ is $G$-equivariant.
By (iii), the $G$-action on $\mathcal{B}(X\ec H)$ comes from a structure
of a $G$-space on $X\ec H$.
Since for every $g\in G$, we have $g^*\circ P_H^*=P_H^*\circ g^*$, it follows from (i) that
$P_G\circ g=g\circ P_G$ on a conull set. This completes the proof.
\end{proof}

\begin{cor} \label{c:ec}
Let $G$ be an lcsc group, let $H$ be its closed subgroup, and let $\overline{H}^{MT}$ be the Mautner envelope of $H$ as in \S\ref{s:mautner}.
Let  $(X,\xi)$ be a $G$-space with a $G$-invariant probability measure.
Then the natural map $X\ec H\to X\ec \overline{H}^{MT}$ is a Borel isomorphism.
\end{cor}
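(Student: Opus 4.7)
The plan is to reduce the statement to a question about Boolean $\sigma$-algebras of invariant sets and then invoke the defining property of the Mautner envelope, applied to the Koopman representation on $L^2(X,\xi)$.

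First, I would observe that by Proposition~\ref{p:erg}, the space $X\ec K$, for any closed subgroup $K<G$, is characterized up to isomorphism by the Boolean $\sigma$-algebra $P_K^*(\mathcal{B}(X\ec K))\subset\mathcal{B}(X)$, which equals the sub-algebra of $K$-invariant elements. Since $H<\overline{H}^{MT}$, every $\overline{H}^{MT}$-invariant element of $\mathcal{B}(X)$ is $H$-invariant, so we obtain an embedding of Boolean $\sigma$-algebras and hence a canonical Borel $G$-map $X\ec H\to X\ec \overline{H}^{MT}$ (this is the ``natural map'' in the statement). It therefore suffices to prove the reverse inclusion: every $H$-invariant class in $\mathcal{B}(X)$ is already $\overline{H}^{MT}$-invariant.

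The main tool is the Koopman representation. Since $\xi$ is a $G$-invariant probability measure, the formula $(\pi(g)f)(x)=f(g^{-1}x)$ defines a continuous unitary representation $\pi:G\to\mathcal{U}(L^2(X,\xi))$. Given a measurable set $A\subset X$ whose class $[A]\in\mathcal{B}(X)$ is $H$-invariant, the indicator function $\chi_A$ lies in $L^2(X,\xi)$ (finiteness of $\xi$) and is fixed by $\pi(H)$. By the very definition of the Mautner envelope,
\[
L^2(X,\xi)^{\pi(H)}\subset L^2(X,\xi)^{\pi(\overline{H}^{MT})},
\]
so $\chi_A$ is fixed by every element of $\overline{H}^{MT}$. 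This means that for each $g\in\overline{H}^{MT}$ the symmetric difference $gA\triangle A$ is $\xi$-null, i.e. $[A]$ is $\overline{H}^{MT}$-invariant in $\mathcal{B}(X)$.

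This shows the two $G$-invariant sub-algebras of $\mathcal{B}(X)$ coincide, and hence the natural $G$-map $X\ec H\to X\ec \overline{H}^{MT}$ induces an isomorphism of the corresponding Boolean $\sigma$-algebras; by the standard Mackey--Ramsay correspondence between standard Borel spaces (modulo null sets) and countably separated Boolean $\sigma$-algebras used already in the proof of Proposition~\ref{p:erg_comp} (facts (i)--(iii) there), this upgrades to a Borel isomorphism, as required. The only subtle point, which I expect to be the main (minor) obstacle, is to verify that the Koopman representation on $L^2(X,\xi)$ is genuinely a continuous unitary representation of the lcsc group $G$; this is standard once one knows that the action $G\times X\to X$ is Borel measurable and $\xi$ is $G$-invariant.
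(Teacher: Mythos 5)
Your proposal is correct and follows essentially the same route as the paper's own proof: both reduce to showing that any $H$-invariant class in $\mathcal{B}(X)$ is $\overline{H}^{MT}$-invariant, and both do this by viewing the indicator function of an $H$-invariant set as an $H$-fixed vector in $L^2(X,\xi)$ and invoking the defining property of the Mautner envelope. The paper states this more tersely, leaving implicit the Boolean-algebra bookkeeping and the upgrade to a Borel isomorphism via Mackey--Ramsay, which you spell out correctly.
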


\begin{proof}
To prove the corollary we need to show that every
Borel subset $B$ of $X$ such that $\mu(B\triangle hB)=0$ for $h\in H$,
we have $\mu(B\triangle hB)=0$ for $h\in \overline{H}^{MT}$.
Indeed, the characteristic function of $B$ is an
${H}$-invariant vector in $L^2(X,\mu)$,
hence it is also invariant by $\overline{H}^{MT}$.
\end{proof}

%

\begin{proof}
Since $N<\MT{H}$, the natural map $X\ec \MT{H}\to (X\ec N)\ec \MT{H}$
is an isomorphism.
Since by Corollary~\ref{c:ec},
$X\ec H\simeq  X\ec \MT{H}$ and $(X\ec N)\ec H\simeq (X\ec N)\ec \MT{H}$,
this implies the claim.
\end{proof}

\end{appendix}

\begin{bibdiv}
\begin{biblist}

\bib{Bader+Shalom}{article}{
	   author={Bader, U.},
	   author={Shalom, Y.},
	   title={Factor and normal subgroup theorems for lattices in products of groups},
	   journal={Invent. Math.},
   volume={163},
   date={2006},
   number={2},
   pages={415--454},
}

\bib{BQ}{article}{
   author={Benoist, Yves},
   author={Quint, Jean-Fran{\c{c}}ois},
   title={Mesures stationnaires et ferm\'es invariants des espaces
   homog\`enes II},
   language={French, with English and French summaries},
   journal={C. R. Math. Acad. Sci. Paris},
   volume={349},
   date={2011},
   number={5-6},
   pages={341--345},
}

\bib{Borel-density}{article}{
   author={Borel, Armand},
   title={Density properties for certain subgroups of semi-simple groups
   without compact components},
   journal={Ann. of Math. (2)},
   volume={72},
   date={1960},
   pages={179--188},
}

\bib{Borel-book}{book}{
   author={Borel, Armand},
   title={Linear algebraic groups},
   series={Graduate Texts in Mathematics},
   volume={126},
   edition={2},
   publisher={Springer-Verlag},
   place={New York},
   date={1991},
   pages={xii+288},
}

\bib{Dani}{article}{
   author={Dani, Shrikrishna G.},
   title={A simple proof of Borel's density theorem},
   journal={Math. Z.},
   volume={174},
   date={1980},
   number={1},
   pages={81--94},
}

\bib{E+L}{article}{
   author={Einsiedler, Manfred},
   author={Lindenstrauss, Elon},
   title={Rigidity properties of $\Bbb Z^d$-actions on tori and
   solenoids},
   journal={Electron. Res. Announc. Amer. Math. Soc.},
   volume={9},
   date={2003},
   pages={99--110 (electronic)},
}

\bib{Furman}{article}{
   author={Furman, Alex},
   title={Measurable rigidity of actions on infinite measure homogeneous
   spaces. II},
   journal={J. Amer. Math. Soc.},
   volume={21},
   date={2008},
   number={2},
   pages={479--512},
}

\bib{Furst-density}{article}{
   author={Furstenberg, Harry},
   title={A note on Borel's density theorem},
   journal={Proc. Amer. Math. Soc.},
   volume={55},
   date={1976},
   number={1},
   pages={209--212},
}

\bib{Glasner}{book}{
   author={Glasner, Eli},
   title={Ergodic theory via joinings},
   series={Mathematical Surveys and Monographs},
   volume={101},
   publisher={American Mathematical Society},
   place={Providence, RI},
   date={2003},
   pages={xii+384},
}

\bib{G+S}{article}{
   author={Greschonig, Gernot},
   author={Schmidt, Klaus},
   title={Ergodic decomposition of quasi-invariant probability measures},
   note={Dedicated to the memory of Anzelm Iwanik},
   journal={Colloq. Math.},
   volume={84/85},
   date={2000},
   number={part 2},
   part={part 2},
   pages={495--514},
}

\bib{Guiv}{article}{
   author={Guivarc'h, Y.},
   title={Propri\'et\'es ergodiques, en mesure infinie, de certains
   syst\`emes dynamiques fibr\'es},
   language={French, with English summary},
   journal={Ergodic Theory Dynam. Systems},
   volume={9},
   date={1989},
   number={3},
   pages={433--453},
}

\bib{Hopf}{article}{
   author={Hopf, Eberhard},
   title={Ergodic theory and the geodesic flow on surfaces of constant
   negative curvature},
   journal={Bull. Amer. Math. Soc.},
   volume={77},
   date={1971},
   pages={863--877},
}

\bib{KS}{article}{
   author={Kalinin, Boris},
   author={Spatzier, Ralf},
   title={Rigidity of the measurable structure for algebraic actions of
   higher-rank Abelian groups},
   journal={Ergodic Theory Dynam. Systems},
   volume={25},
   date={2005},
   number={1},
   pages={175--200},
}

\bib{KKS}{article}{
   author={Katok, Anatole},
   author={Katok, Svetlana},
   author={Schmidt, Klaus},
   title={Rigidity of measurable structure for ${\Bbb Z}^d$-actions by
   automorphisms of a torus},
   journal={Comment. Math. Helv.},
   volume={77},
   date={2002},
   number={4},
   pages={718--745},
   issn={0010-2571},
   review={\MR{1949111 (2003h:37007)}},
   doi={10.1007/PL00012439},
}

\bib{mac}{article}{
   author={Mackey, George W.},
   title={Point realizations of transformation groups},
   journal={Illinois J. Math.},
   volume={6},
   date={1962},
   pages={327--335},
}

\bib{Margulis-factor}{article}{
   author={Margulis, G. A.},
   title={Finiteness of quotient groups of discrete groups},
   journal={Funkts. Anal. Prilozh.},
   volume={13},
   date={1979},
   pages={28--39},
}

\bib{Margulis-book}{book}{
      author={Margulis, G. A.},
       title={Discrete subgroups of semisimple {L}ie groups},
      series={Ergebnisse der Mathematik und ihrer Grenzgebiete (3) [Results in
  Mathematics and Related Areas (3)]},
   publisher={Springer-Verlag},
     address={Berlin},
        date={1991},
      volume={17},
}

\bib{MT}{article}{
   author={Margulis, G. A.},
   author={Tomanov, G. M.},
   title={Measure rigidity for almost linear groups and its applications},
   journal={J. Anal. Math.},
   volume={69},
   date={1996},
   pages={25--54},
}

\bib{Nevo+Zimmer}{article}{
   author={Nevo, Amos},
   author={Zimmer, Robert J.},
   title={A generalization of the intermediate factors theorem},
   journal={J. Anal. Math.},
   volume={86},
   date={2002},
   pages={93--104},
}

\bib{Phelps}{book}{
   author={Phelps, Robert R.},
   title={Lectures on Choquet's theorem},
   series={Lecture Notes in Mathematics},
   volume={1757},
   edition={2},
   publisher={Springer-Verlag},
   place={Berlin},
   date={2001},
   pages={viii+124},
}

\bib{PR}{book}{
   author={Platonov, Vladimir},
   author={Rapinchuk, Andrei},
   title={Algebraic groups and number theory},
   series={Pure and Applied Mathematics},
   volume={139},
   note={Translated from the 1991 Russian original by Rachel Rowen},
   publisher={Academic Press Inc.},
   place={Boston, MA},
   date={1994},
   pages={xii+614},
}

\bib{ram}{article}{
   author={Ramsay, Arlan},
   title={Virtual groups and group actions},
   journal={Advances in Math.},
   volume={6},
   date={1971},
   pages={253--322 (1971)},
}

\bib{r1}{article}{
   author={Ratner, Marina},
   title={Horocycle flows, joinings and rigidity of products},
   journal={Ann. of Math. (2)},
   volume={118},
   date={1983},
   number={2},
   pages={277--313},
}
		
\bib{r2}{article}{
   author={Ratner, Marina},
   title={Factors of horocycle flows},
   journal={Ergodic Theory Dynam. Systems},
   volume={2},
   date={1982},
   number={3-4},
   pages={465--489 (1983)},
}
		
\bib{r3}{article}{
   author={Ratner, Marina},
   title={Rigidity of horocycle flows},
   journal={Ann. of Math. (2)},
   volume={115},
   date={1982},
   number={3},
   pages={597--614},
}

\bib{r4}{article}{
   author={Ratner, Marina},
   title={Interactions between ergodic theory, Lie groups, and number
   theory},
   conference={
      title={ 2},
      address={Z\"urich},
      date={1994},
   },
   book={
      publisher={Birkh\"auser},
      place={Basel},
   },
   date={1995},
number={2},
   pages={157--182},
   review={\MR{1403920 (98k:22046)}},
}

\bib{ratnerduke}{article}{
author={Ratner, Marina},
title={Raghunathan’s conjectures for Cartesian products of real and $\mathbb{𝔭}$-adic Lie groups},
journal={Duke Math. J.},
volume={77},
date={1995},
pages={275-382},
}

\bib{Rees}{article}{
   author={Rees, Mary},
   title={Checking ergodicity of some geodesic flows with infinite Gibbs
   measure},
   journal={Ergodic Theory Dynamical Systems},
   volume={1},
   date={1981},
   number={1},
   pages={107--133},
}

\bib{Shah}{article}{
   author={Shah, Nimish A.},
   title={Invariant measures and orbit closures on homogeneous spaces for
   actions of subgroups generated by unipotent elements},
   conference={
      title={Lie groups and ergodic theory},
      address={Mumbai},
      date={1996},
   },
   book={
      series={Tata Inst. Fund. Res. Stud. Math.},
      volume={14},
      publisher={Tata Inst. Fund. Res.},
      place={Bombay},
   },
   date={1998},
}

\bib{Shalom-density}{article}{
   author={Shalom, Yehuda},
   title={Invariant measures for algebraic actions, Zariski dense subgroups
   and Kazhdan's property (T)},
   journal={Trans. Amer. Math. Soc.},
   volume={351},
   date={1999},
   number={8},
   pages={3387--3412},
}

\bib{Shalom+Steger}{article}{
   author={Shalom, Yehuda},
   author={Steger, Tim},
   title={Measurable rigidity of actions on infinite measure homogeneous
   spaces. I},
   note={unpublished},
}

\bib{Stuck+Zimmer}{article}{
   author={Stuck, Garrett},
   author={Zimmer, Robert J.},
   title={Stabilizers for ergodic actions of higher rank semisimple groups},
   journal={Ann. of Math. (2)},
   volume={139},
   date={1994},
   number={3},
   pages={723--747},
}

\bib{Wa1}{article}{
   author={Wang, S. P.},
   title={On density properties of $S$-subgroups of locally compact groups},
   journal={Ann. of Math. (2)},
   volume={94},
   date={1971},
   pages={325--329},
}

\bib{Wa2}{article}{
   author={Wang, S. P.},
   title={On the Mautner phenomenon and groups with property $({\rm T})$},
   journal={Amer. J. Math.},
   volume={104},
   date={1982},
   number={6},
   pages={1191--1210},
}

\bib{Wi}{article}{
   author={Witte, Dave},
   title={Measurable quotients of unipotent translations on homogeneous
   spaces},
   journal={Trans. Amer. Math. Soc.},
   volume={345},
   date={1994},
   number={2},
   pages={577--594},
}

\bib{Zimmer-factor}{article}{
   author={Zimmer, Robert J.},
   title={Ergodic theory, semisimple Lie groups, and foliations by manifolds
   of negative curvature},
   journal={Inst. Hautes \'Etudes Sci. Publ. Math.},
   number={55},
   date={1982},
   pages={37--62},
}

\bib{Zimmer-book}{book}{
   author={Zimmer, R. J.},
   title={Ergodic theory and semisimple groups},
   series={Monographs in Mathematics},
   volume={81},
   publisher={Birkh\"auser Verlag},
   place={Basel},
   date={1984},
   pages={x+209},
}

\end{biblist}
\end{bibdiv}

\end{document}

#####################################################################

\end{document}

The rest of this section will be occupied by the proof of Theorem \ref{th:alg}.
Our first step toward the proof of Theorem \ref{th:alg} is the case of a unipotent group
over a non-Archimedean local field. By restriction of scalars, there is no loss of generality in assuming that $k = \Q_p$ for some prime $p$. A group $H$ is called {\it $p$-divisible} if for every $h\in H$, there exists $g\in H$ such that
$g^p=h$.
For $\Q_p$ we can relate the problem to the notion of $p$-divisibility and prove
a stronger form of Theorem \ref{th:alg} --- Corollary \ref{cor:unip}.

\begin{lem}\label{l:unip1}
Let $\mathbf{U}$ be a unipotent algebraic group over a local field $k$ of residue characteristic $p$. Then there is an algebraic injective map $\varphi_p: \mathbf{U} \to \mathbf{U}$ satisfying $\varphi_p(g)^p=g$ for any $g \in U$.

\end{lem}
\begin{proof}
Since $\mathbf{U}$ is unipotent,
the exponential map $\exp:\hbox{Lie}(U)\to U$ is a polynomial isomorphism.
Then
$
\varphi_p(g)=\exp(p^{-1}\exp^{-1}(g))
$
 is clearly an algebraic map satisfying $\varphi_p(g)^p=g$.
For injectivity, note that $h \mapsto h^p$ is an inverse of $\varphi_p$.
\end{proof}

\begin{lem}\label{l:close}
Let $\mathbf{U}$ be as in Lemma \ref{l:unip1} and let $H$ be a subgroup of $U$.
\begin{itemize}
\item
If $H$ is $p$-divisible then so is the closure $\overline{H}$.
\item
The group $H$ is algebraic if and only if it is closed and $p$-divisible.
\end{itemize}

\end{lem}

\begin{proof}
The first assertion follows from the continuity of $\varphi_p$.

For the second assertion, recall that the exponential map of $H$ is the restriction to $\Lie(H)$ of the exponential map of $U$, and is an isomorphism if $H$ is algebraic.
Thus the implication $\Longrightarrow$ follows by considering $\varphi_p|_H$. For the converse, suppose that $H$ is a closed $p$-divisible subgroup of $U$.
Then $\mathfrak{h}=\exp^{-1}(H)$ is a closed subset of $\hbox{Lie}(U)$ such that
$\mathbb{Z}\mathfrak{h}\subset \mathfrak{h}$. By $p$-divisibility, we also have $\frac{1}{p} \mathfrak{h}\subset \mathfrak{h}$, i.e. $\mathfrak{h}$ satisfies $\mathbb{Z}[p^{-1}]\mathfrak{h}\subset \mathfrak{h}$. Because $\mathfrak{h}$ is closed we obtain $\mathbb{Q}_p\mathfrak{h}\subset \mathfrak{h}$.
For every $x,y\in\mathfrak{h}$,
\[
x+y=\lim_{k\to\infty} p^{-k}\exp^{-1}\left(\exp(x)^{p^k}\exp(y)^{p^k}\right)\in \mathfrak{h}.
\]
Hence, $\mathfrak{h}$ is a vector subspace of $\hbox{Lie}(U)$, and $H=\exp(\mathfrak{h})$
is an algebraic subgroup, as required.
\end{proof}

\begin{lem}\label{l:unip2}
Let $H_i$, $i\in I$, be $p$-divisible subgroups of a nilpotent group.
Then the subgroup $H$ generated by $H_i$, $i\in I$, is $p$-divisible too.
\end{lem}

\begin{proof}
Let $H$ be the group generated by $H_i$, $i\in I$, and $N_j=[G,N_{j-1}]$
the lower central series of $H$. It is clear that $H/H'$ is $p$-divisible.
If we show that $H/N_j$ being $p$-divisible implies that $H/N_{j+1}$ is
$p$-divisible, then this will imply the claim.
Since $N_j/N_{j+1}=[H,N_{j-1}]/N_{j+1}$
is central in $H/N_{j+1}$, the map $\phi_n(h)=[h,n]N_{j+1}$ with $n\in N_{j-1}$
defines a homomorphism $H/N_j\to N_j/N_{j+1}$. Moreover, $N_j/N_{j+1}$ is generated
by $\hbox{Im}(\phi_n)$, $n\in N_{j-1}$. Since $H/N_j$ is $p$-divisible and
$N_j/N_{j+1}$ is abelian, it follows that $N_j/N_{j+1}$ is $p$-divisible as well.
Given $h\in H$, there exists $g\in H$ such that $hg^{-p}\in N_j$.
Then there exists $n\in N_j$ such that $hg^{-p}N_{j+1}= n^pN_{j+1}$
and $hN_{j+1}=(gn)^pN_{j+1}$. This completes the proof.
\end{proof}

Combining Lemmas \ref{l:unip1}, \ref{l:close}, and \ref{l:unip2}, we deduce

\begin{cor}\label{cor:unip}
Let $\mathbf{U}$ be a unipotent algebraic group over a non-Archimedean local field $k$
and $H_i$, $i\in I$, algebraic subgroups of $U$. Then
the closed subgroup $H$ generated by $H_i$, $i\in I$, is algebraic too.
\end{cor}

In the proof of Theorem \ref{th:alg} we also use the following auxiliary result:

\begin{lem}\label{l:qp}
Let $\mathfrak{f}$ be a closed subset of $\mathbb{Q}_p^{d}$ such that $\mathbb{Z}\mathfrak{f}\subset
\mathfrak{f}$ and for every $x\in \mathfrak{f}$,
$x\ne 0$, there exists
$n\in \mathbb{N}$ such that $p^{-n}x\notin \mathfrak{f}$. Then $\mathfrak{f}$ is compact.
\end{lem}

\begin{proof}
Let $\|\cdot\|$ denote the $\max$-norm on $\mathbb{Q}_p^{d}$.
Suppose that the claim fails, that is, there exists a sequence $x_j\in\mathbb{Q}_p^{d}$ such that
$\|x_j\|=1$ and $p^{-n_j}x_j\in\mathfrak{f}$ for $n_j\to\infty$. Passing to a subsequence
we may assume that $x_j\to x$ with $\|x\|=1$. Since $p^{-n}x_j\in\mathfrak{f}$ for $n\le n_j$,
we have  $p^{-n}x\in \mathfrak{f}$ for every $n\in\mathbb{N}$, which is a contradiction.
\end{proof}

\begin{proof}[Proof of Theorem \ref{th:alg}]
Without loss of generality, we assume that $\mathbf{G}$ is the Zariski closure of
the group generated by $\mathbf{A}_i$, $i\in I$.
Then there exists a finite subset $I_0\subset I$ such that
$\mathbf{G}=\prod_{i\in I_0} \mathbf{A}_i$.
In particular, $\mathbf{G}$ is Zariski connected.
The corresponding product map is a dominant morphism, and it follows that the differential of this map is surjective
on a Zariski open subset.
This implies that the product $\prod_{i\in I_0} (\mathbf{A}_i)(k)$ has nonempty interior.
We conclude that the subgroup $N$ is open in $G=\mathbf{G}(k)$.
Note that when $k$ is an Archimedean local field, the group $G$ has finitely many connected components
(see \cite[Theorem~3.6]{PR}).
Hence, this completes the proof in the Archimedean case.
Now we assume that $k$ is non-Archimedean.

Since the characteristic of the field $k$ is zero, there exists a Levi decomposition
$\mathbf{G}=\mathbf{L}\mathbf{U}$ where $\mathbf{L}$ is a reductive $k$-subgroup and
$\mathbf{U}$ is a normal unipotent $k$-subgroup.
If $\mathbf{L}\to \mathbf{L}_1$ is a $k$-anisotropic quotient, then all the groups
$\mathbf{A}_i$, $i\in I$, are contained in the kernel of the map $\mathbf{G}\to \mathbf{L}_1$.
Hence, by Zariski density, $\mathbf{L}$ does not have any $k$-anisotropic quotients.
The reductive group $\mathbf{L}$ has almost-direct-product decomposition $\mathbf{L}=\mathbf{S}\mathbf{T}$
where $\mathbf{S}$ is a semisimple $k$-group and $\mathbf{T}$ is a central $k$-split torus.

Recall that the exponential map $\exp:\hbox{Lie}(U)\to U$ is a polynomial isomorphism.
Let us consider the closed set $\mathfrak{n}=\exp^{-1}(N\cap U)$.
Since $\exp(nx)=\exp(x)^n$ for $n\in \mathbb{Z}$ and $x\in \hbox{Lie}(U)$,
it follows that $\mathbb{Z} \mathfrak{n}\subset \mathfrak{n}$.
The field $k$ contains a subfield $\mathbb{Q}_p$ of $p$-adic rationals, and
$\hbox{Lie}({ U})$ is a finite-dimensional $\mathbb{Q}_p$-module.
Let
\[
	\mathfrak{m}=\{x\in\mathfrak{n}:\, \mathbb{Q}_px\subset \mathfrak{n}\}
\]
and $N^\circ \subset N$ be the closed subgroup generated by $\exp(\mathfrak{m})$.
Since $N^\circ$ is generated by one-dimensional algebraic subgroups $\exp(\mathbb{Q}_px)$,
$x\in \mathfrak{m}$. It follows from Corollary \ref{cor:unip} that
$N^\circ=M_k$ for an algebraic subgroup $M$ of $U$.
Moreover, it is clear that $\mathfrak{m}$ is normalised by the action of ${ G}_k$
which is Zariski dense in ${ G}$. Hence, ${M }$ is a normal subgroup of ${G}$.
Since $N^\circ$ is algebraic, $\exp^{-1}(N^\circ)$ is a $\mathbb{Q}_p$-vector space
and $\exp^{-1}(N^\circ)\subset \mathfrak{m}$. Hence, $N^\circ=\exp(\mathfrak{m})$.

 We consider the factor map $\pi:G\to G/M$.
Let $\tilde N$ be the closed subgroup of $(G/M)_k$ generated by $\pi(A_i)_k$,
$i\in I$. Since $M$ is unipotent, we have either $\hbox{ker}(\pi)\cap A_i =1$ or
$\hbox{ker}(\pi)\cap A_i$ is infinite. In the first case, $\pi$ gives a $k$-isomorphism
$A_i\to \pi(A_i)$. In the second case, since $A_i$ is connected and one-dimensional,
$\hbox{ker}(\pi)\cap A_i$ is Zariski dense in $A_i$ and $\pi(A_i)=1$.
We conclude that in both cases, $\pi((A_i)_k)=\pi(A_i)_k$.
This implies that $\pi(N)$ is a dense subgroup of $\tilde N$.
Since by Proposition \ref{p:alg1}, the map $\pi:G_k\to (G/M)_k$ is open,
$\pi(N)$ is an open and closed subgroup of $(G/M)_k$. Hence,
$\pi(N)=\tilde N$. Moreover, since  $M_k\subset N$,
\begin{equation}\label{eq:nnn}
N=\pi^{-1}(\tilde N)\cap G_k.
\end{equation}

Suppose that $\mathfrak{m}\ne 0$. Then by induction on $\dim (G)$, we
conclude that $\tilde N$ is a f.i.-algebraic subgroup of $(G/M)_k$.
Hence, it follows from (\ref{eq:nnn}) that $N$ is an f.i.[algebraic subgroup of $G$,
as required.

Now we assume that $\mathfrak{m}= 0$.

Since the group ${ S}_k\cap N$ is normal and open in ${S}_k$, it follows
from \cite[Proposition~3.17]{PR} that
\begin{equation}\label{eq:s}
|S_k:{ S}_k\cap N|<\infty.
\end{equation}

We show next that
\begin{equation}\label{eq:t}
|T_k:{T}_k\cap N|<\infty.
\end{equation}
Let $\pi_T:{ G}\to { T}/(T\cap S)$ and $\pi_S:{ G}\to { S}/(T\cap S)$ be the natural factor maps.
We have
$$
{ T}/(T\cap S)=\prod_{i\in I_0} \pi_T({ A}_i).
$$
Since $\pi({ A}_i)=1$ for every unipotent $A_i$, we may assume that all the subgroups
${ A}_i$ in the above product are tori. Then there exist elements  $g_i\in { G}_k$
such that $g_i{ A}_ig_i^{-1}\subset { L}$. By Proposition \ref{p:alg1},
$\pi_S(S_k)$ is a subgroup of finite index in $S_k$, and by (\ref{eq:s}), $\pi_S(S_k\cap N)$
is of finite index as well. Let $B_i$ be a finite index subgroup
of $g_i({A}_i)_kg_i^{-1}$ such that
$$
\pi_S(B_i)\subset \pi_S(S_k\cap N).
$$
Then
$$
B_i\subset (S_k\cap N)T,
$$
and hence
$$
B_i\subset (S_k\cap N)(T_k\cap N).
$$
We conclude that
$$
\pi_T(B_i)\subset \pi_T(T_k\cap N).
$$
By Proposition \ref{p:alg1}, $\prod_{i\in I_0} \pi_T({ B}_i)$ is a finite index subgroup of $T_k$.
Hence, $\pi_T(T_k\cap N)$ has finite index too.
Finally, since $T\cap S$ is finite, (\ref{eq:t}) holds.

It follows from Proposition \ref{p:alg1} that $S_kT_k$ is of finite index in $L_k$.
Therefore, combining (\ref{eq:s}) and (\ref{eq:t}), we conclude that
\begin{equation}\label{eq:l}
[L_k:{ L}_k\cap N]<\infty.
\end{equation}

Now we claim that under the assumption $\mathfrak{m}= 0$,
we have $A_i\subset { L}$ for every $i\in I$.
 Then it will follow that $N\subset { L}_k$, and by Zariski density,
$G=L$. Hence, the theorem is implied by (\ref{eq:l}).

We prove the claim for a semisimple subgroup $A_i$ (unipotent $A_i$'s are treated similarly).
Consider a $k$-isomorphism $a:{\rm G}_m\to A_i$.
We have a decomposition $a(t)=l(t)u(t)$ where $l(t)$ and $u(t)$ are polynomial maps from ${\rm G}_m$
into ${ L}$ and ${ U}$ respectively. Since $L_k\cap N$ has finite index in $L_k$,
there exists a subgroup $k_0^\times$ of finite index in $k^\times$ such that $l(k_0^\times)\subset N$.
Then we also have $u(k_0^\times)\subset N$.
Since $\mathfrak{m}= 0$, it follows from Lemma \ref{l:qp}
that the set $\mathfrak{n}=\exp^{-1}(N\cap U_k)$ is compact.
Then $q(t)=\exp^{-1}(u(t))$, $t\in {\rm G}_m$, is a polynomial map
such that $q(k^\times)$ is bounded.  This implies that $q=q(0)=0$,
and hence $u(t)=1$, as required. This completes the proof of the theorem.
\end{proof}

\begin{lem} \label{l:nil}
Let $\mathbf{G}$ be an algebraic group defined over a local field $k$ and $\mathbf{H}_i$, $i\in I$, a family of
connected $k$-algebraic subgroups.
Let $H$ be a group generated by finite index subgroups of the group $H_i=\mathbf{H}_i(k)$.
Then the closed subgroup $H$ is of a finite index in $G$.
\end{lem}

\begin{proof}
First observe that the group $\mathbf{G}$ must be connected, as it is generated by the connected groups $\mathbf{H}_i$.
By dimension consideration we may assume that the collection $I$ is finite.
In case $\mathbf{G}$ is abelian the lemma follows from Proposition~\ref{p:alg2}, by considering the homomorphism
$\prod_I \mathbf{H}_i\to \mathbf{G}$.
The proof will proceed by induction on the dimension of $\mathbf{G}$.
We assume $\mathbf{G}$ is nilpotent of degree $n$ for some $n>1$.
Clearly, it is enough to show that $H$ contains a finite index subgroup of $\mathbf{N}(k)$,
where $\mathbf{N}$ is some normal subgroup of $\mathbf{G}$ of positive dimension.
We will establish the existence of such a group $\mathbf{N}$ in the last term of the descending central series $\mathbf{G}_n$.
Note that the sequence of dimensions of the terms in the descending central series is strictly decreasing, as $\mathbf{G}$ is connected and nilpotent.
By our induction assumption applied to $\mathbf{G}/\mathbf{G}_n$, we could find an element
$h\in H$ which is in $\mathbf{G}_{n-1}-\mathbf{G}_{n}$.
By the formula
\[ [h,xy]=[h,x][h,y]^x=[h,x][h,y] \]
we get that the map $x\mapsto [h,x]$ is a non-trivial algebraic $k$-morphism from $\mathbf{G}$ to $\mathbf{G}_n$.
We denote its image by $\mathbf{N}$ - this is a Zariski closed connected $k$-subgroup of $\mathbf{G}_n$,
as desired.
\end{proof}

\begin{lem}\label{l:sol}
Let $\mathbf{G}$ be a solvable algebraic group defined over a local field $k$ and $\mathbf{A}_i$, $i\in I$, a family of
connected $k$-algebraic subgroups closed under conjugation by elements of $G$.
Then the closed subgroup $N$ of $G$ generated by $A_i$, $i\in I$, is f.i.-algebraic.
\end{lem}

\begin{proof}
First observe that the group $\mathbf{G}$ must be connected, as it is generated by connected subgroups.
Denote $\mathbf{U}=\mathbf{G}_{\infty}$, the stabilized term of the descending central series.
It is a connected unipotent group.
In case $\mathbf{U}$ is trivial, $\mathbf{G}$ is nilpotent, and we are done by the previous lemma.
We proceed by an induction on the dimension of $\mathbf{U}$.

Assume we could find a non-trivial connected unipotent group $\mathbf{H}$
such that
$N$ contains a finite index subgroup of $H=\mathbf{H}(k)$.
Denote by $\mathbf{L}$ the group generated by all conjugations of $\mathbf{H}$ by elements of $G$.
$\mathbf{L}$ is normal in $\mathbf{G}$ since $G$ is Zariski-dense in $\mathbf{G}$.
By the previous lemma, $N$ contains a finite index subgroup of $L=\mathbf{L}(k)$.
The lemma will follow by applying the induction step to $\mathbf{G}/\mathbf{L}$.

Denote the connected component of the center of $\mathbf{U}$ by $\mathbf{Z}$.
It has a positive dimension.
Since $\mathbf{G}$ is generated by the groups $\mathbf{A}_i$ and $\mathbf{Z}<\mathbf{G}_{\infty}$ we get that there exists
a group $\mathbf{A}_i$ which does not commute with $\mathbf{Z}$.
Since $A_i$ is Zariski dense in $\mathbf{A}_i$, we can find $a\in A_i$ which does not commute with $\mathbf{Z}$.
Since $\mathbf{Z}$ is normal in $\mathbf{G}$ it is closed under taking commutator with $a$.
We get, for every $x,y\in \mathbf{Z}$, the equation
\[ [a,xy]=[a,x][a,y]^x=[a,x][a,y]. \]
That is, we get a non trivial $k$-homomorphism from $\mathbf{Z}$ to $\mathbf{Z}$.
The image $\mathbf{H}$ is a connected unipotent group of positive dimension.
For $z\in Z$, $[a,z]$ is clearly in $N$.
Since the image of $Z$ is a finite index subgroup of $H$ by Proposition~\ref{p:alg2},
the group $\mathbf{H}$ indeed satisfies all of our required properties, and the proof is complete.
\end{proof}